\providecommand{\noopsort}[1]{} %year = "unpublished manuscript\setbox0=\hbox{2003}"
\numberwithin{table}{section}
\theoremstyle{plain}
\newtheorem{theorem}{Theorem}[section]%\newtheorem*{nonumbertheorem}{Theorem}
\newtheorem{corollary}[theorem]{Corollary}
\newtheorem{lemma}[theorem]{Lemma}
\newtheorem{proposition}[theorem]{Proposition}
\newtheorem{main}{Theorem}
\newtheorem{maincorollary}[main]{Corollary}
\theoremstyle{definition}
\newtheorem{definition}[theorem]{Definition}
\theoremstyle{remark}
\newtheorem{remark}[theorem]{Remark}
\numberwithin{equation}{section}
\newcommand{\Z}{\mathbb{Z}}\newcommand{\Q}{\mathbb{Q}}
\newcommand{\HH}{\mathbb{H}}
\newcommand{\Sp}{\mathrm{Sp}}	\renewcommand{\Sp}{\ensuremath{\operatorname{\mathsf{Sp}}}}
\newcommand{\Spin}{\mathrm{Spin}}\renewcommand{\Spin}{\ensuremath{\operatorname{\mathsf{Spin}}}}
\newcommand{\SO}{\mathrm{\SO}}	\renewcommand{\SO}{\ensuremath{\operatorname{\mathsf{SO}}}}
\newcommand{\SU}{\mathrm{SU}}	\renewcommand{\SU}{\ensuremath{\operatorname{\mathsf{SU}}}}
\newcommand{\Un}{\mathsf{U}}	\renewcommand{\Un}{\ensuremath{\operatorname{\mathsf{U}}}}
\newcommand{\sone}{\mathrm{S}^1}	\renewcommand{\sone}{\ensuremath{\operatorname{\mathsf{S^{\hspace{.005in}1}}}}}
\newcommand{\E}{\ensuremath{\operatorname{\mathsf{E}}}}
\newcommand{\F}{\ensuremath{\operatorname{\mathsf{F}}}}
\newcommand{\Gtwo}{\ensuremath{\operatorname{\mathsf{G_{\hspace{.005in}2}}}}}
\newcommand{\gT}{\ensuremath{\operatorname{\mathsf{T}}}}
\newcommand{\s}{\mathbb{S}}
\newcommand{\CP}{\mathbb{CP}}
\newcommand{\HP}{\mathbb{H\mkern1mu P}}
\newcommand{\OP}{\mathbb{O\mkern1mu P}}
\newcommand{\QP}{\mathbb{Q\mkern1mu P}}
\DeclareMathOperator{\cod}{cod}
\DeclareMathOperator{\Isom}{Isom}
\DeclareMathOperator{\rank}{rank}
\DeclareMathOperator{\rk}{rank}
\newcommand{\of}[1]{\left(#1\right)}
\newcommand{\tensor}{\otimes}
\title[Cohomogeneity one and singly generated rational cohomology]{Cohomogeneity one manifolds with singly generated rational cohomology}
\author{Jason DeVito}
\address{Department of Mathematics, University of Oklahoma, Norman, OK 73019}
\email{Jason.B.DeVito-1@ou.edu}
\author{Lee Kennard}
\address{Department of Mathematics, Syracuse University, Syracuse, NY 13244}
\email{ltkennar@syr.edu}
\date{\today}
\begin{document}

%\tableofcontents

\begin{abstract} We classify simply connected, closed cohomogeneity one manifolds with singly generated or $4$-periodic rational cohomology and positive Euler characteristic.\end{abstract}

\maketitle

We denote by $\QP^n_k$ any smooth, simply connected, closed manifold whose rational cohomology is isomorphic to the truncated polynomial algebra $\Q[x]/(x^{n+1})$ where the generator $x$ has degree $k$. Note that such a manifold is a rational sphere or point if $k$ is odd by the graded commutativity of the cup product. If $k$ is even, then a $\QP^n_k$ has even dimension $kn$ and positive Euler characteristic $n+1$.

Prototypical examples are simply connected, closed manifolds with the rational cohomology (equivalently, rational homotopy type) of a compact rank one symmetric space: a rational sphere is a $\QP^1_k$, a rational $\CP^n$ is a $\QP^n_2$, a rational  $\HP^n$ is a $\QP^n_4$, and a rational Cayley plane is a $\QP^2_8$.

We call the parameters $n$ and $k$ standard if they correspond to a rank one symmetric space. The classification of parameters $(n,k)$ for which a $\QP^n_k$ exists is reduced by way of the Barge-Sullivan rational realization theorem to a number theoretic problem (see Su \cite{Su14}), but this problem is hard and there is no classification (cf. \cite{FowlerSu16,KennardSu}).

This paper is motivated in part by the attempt to find highly symmetric models for $\QP^n_k$ with non-standard parameters. It is known that no such manifold admits a homogeneous or biquotient structure (see Kapovitch--Ziller \cite{KapovitchZiller04} and Totaro \cite{Totaro02}). Our first theorem is a similar, negative result:

\begin{main}\label{thm:QP}
If a $\QP^n_k$ admits a cohomogeneity one action, then $n$ and $k$ are standard. If moreover  $k$ is even, then the space is diffeomorphic to a rank one symmetric space, the Grassmannian $\SO(2m+1)/\SO(2) \times \SO(2m-1)$, or $\Gtwo/\SO(4)$, and if the action is almost effective and has no orbit equivalent proper subaction, then it is equivalent to a linear action or, in the case of $\Gtwo/\SO(4)$, left multiplication by $\SU(3)\subseteq \Gtwo$. 
\end{main}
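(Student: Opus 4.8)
The plan is to read off the group diagram of the action, convert the hypothesis $H^*(M;\Q)\cong\Q[x]/(x^{n+1})$ into rigid constraints on the rational cohomology of the orbits, invoke the non-existence of non-standard homogeneous $\QP^n_k$ to force the parameters, and then classify the surviving actions directly. Concretely, suppose $M=\QP^n_k$ admits a cohomogeneity one action. After passing to a compact cover we may assume the acting group is a compact connected Lie group $G$; since $\chi(M)>0$ when $k$ is even and $M$ is a rational sphere when $k$ is odd, $M$ is not a bundle over $S^1$, the orbit space is an interval, and we obtain a group diagram $H\subseteq\{K^-,K^+\}\subseteq G$ with $K^\pm/H=S^{\ell_\pm}$ and $\ell_\pm\ge 1$ by simple connectivity. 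If $k$ is odd, graded commutativity forces $n\le 1$, so $M$ is a point or a rational sphere and $(n,k)$ is automatically standard; as the cohomogeneity one rational spheres are already understood, from now on $k$ is even and $\chi(M)=n+1\ge 2$.

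\textbf{Cohomological constraints on the orbits.} Write $M=M_-\cup M_+$ with $M_\pm$ the tubular neighbourhood of the singular orbit $B_\pm=G/K^\pm$, so $M_\pm$ retracts to $B_\pm$ and $M_-\cap M_+$ retracts to the principal orbit $G/H$, which is the linear $S^{\ell_\pm}$-bundle over $B_\pm$ coming from the slice representation. Running the Mayer--Vietoris sequence of this decomposition over $\Q$ together with $H^{\mathrm{odd}}(M)=0$ and the Gysin sequences of the two sphere bundles, I would extract: (i) multiplicativity of the Euler characteristic in $S^{\ell_\pm}\to G/H\to B_\pm$ forces $\chi(G/H)=0$ — otherwise both $\ell_\pm$ are even, $\chi(B_\pm)=\tfrac12\chi(G/H)$, and $\chi(M)=\chi(B_-)+\chi(B_+)-\chi(G/H)=0$, a contradiction — hence $\rank H<\rank G$; (ii) $\chi(M)=\chi(B_-)+\chi(B_+)>0$, so at least one singular orbit, say $B_-$, has positive Euler characteristic, hence equal rank in $G$ and rational cohomology concentrated in even degrees; (iii) the Mayer--Vietoris map $H^j(B_-)\oplus H^j(B_+)\to H^j(G/H)$ is onto in even degrees with one-dimensional kernels exactly in degrees $0,k,2k,\dots,kn$ and injective in odd degrees with cokernel of dimension at most one. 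Together these give a rigid identity among the Poincaré polynomials of $G/H$, $B_-$, $B_+$ and $M$.

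\textbf{Identifying the orbits.} I would then combine that Poincaré-polynomial identity with the structure theory of compact homogeneous spaces — rational ellipticity, the fact that equal rank makes the cohomology even with Poincaré polynomial a quotient of products $1-t^{d_i}$, and the fact that $G/H$ is a linear sphere bundle over each $B_\pm$ — to show that $B_-$ is rationally a rank one symmetric space or an oriented real Grassmannian of $2$-planes, that $B_+$ is rationally a point, a sphere, or again a rank one symmetric space, and that $G/H$ is the corresponding sphere or Stiefel-type bundle. Feeding these rational types back into the classification of compact homogeneous spaces realizing them — where Kapovitch--Ziller \cite{KapovitchZiller04} excludes non-standard homogeneous $\QP^n_k$ among the orbits and hence forces $(n,k)$ standard — cuts the admissible triples $(G,K^-,K^+)$, up to the orbit-equivalence and proper-subaction moves, down to a finite list; reading off the double disk bundle attached to each surviving diagram then shows $M$ is diffeomorphic to a rank one symmetric space, to $\SO(2m+1)/\SO(2)\times\SO(2m-1)$, or to $\Gtwo/\SO(4)$. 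I expect this to be the main obstacle: the difficulty is not a single computation but the exhaustive enumeration of admissible group diagrams, the elimination of the many numerically possible but geometrically impossible candidates, and the handling of low-rank coincidences.

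\textbf{Classifying the actions.} Finally, once $M$ is pinned to this short list, I would classify its cohomogeneity one actions directly. For the rank one symmetric spaces this is the classical classification, and after discarding orbit-equivalent proper subactions every such action is equivalent to one induced by a linear representation. For the quadric $\SO(2m+1)/\SO(2)\times\SO(2m-1)$ and for $\Gtwo/\SO(4)$ the full isometry group is small enough ($\SO(2m+1)$, respectively $\Gtwo$) that the candidate subgroup triples can be enumerated by hand, producing exactly the linear action and, for $\Gtwo/\SO(4)$, left multiplication by $\SU(3)\subseteq\Gtwo$. Assembling these cases completes the proof.
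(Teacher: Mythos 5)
Your reduction of the problem to constraints on the orbits is reasonable as far as it goes, but the pivotal step --- ``Kapovitch--Ziller excludes non-standard homogeneous $\QP^n_k$ among the orbits and hence forces $(n,k)$ standard'' --- does not work. For a hypothetical non-standard $\QP^n_k$ with $k\geq 6$, the singular orbits need not be non-standard rational projective spaces at all: in the generic even-codimension configuration (cf.\ Proposition \ref{pro:orbittype}) the orbits are rational $\QP^{m_\pm}_k$ with $m_\pm$ possibly equal to $1$ or $0$, i.e.\ rational spheres or points, and homogeneous rational spheres of every even dimension exist. So the rational types of $B_\pm$ and $G/H$, together with \cite{KapovitchZiller04}, produce no contradiction by themselves; for instance a $\QP^3_6$ with two singular orbits rationally $\s^6$ of codimension $12$ survives every constraint you list. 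Ruling such configurations out is exactly where the real work lies, and the paper does it with tools absent from your outline: the Grove--Halperin description of the homotopy fiber of $G/H\to M$ and a computation of the connecting homomorphisms (Proposition \ref{connecthom}), which pin down the codimensions $k_\pm$ and the rational homotopy of $G/H$ (Proposition \ref{pro:orbittype}), followed by a split into the non-primitive case (where $M$ fibers over $G/L$, Kapovitch--Ziller is applied to the \emph{base} $G/L$, and Lemma \ref{lem:SmallFiber} forces the bundle to be trivial, contradicting singly generated cohomology) and the primitive case, where Frank's classification (Theorem \ref{thm:Frank}) supplies the finite list of candidate pairs $(G,H)$ that is then eliminated case by case using transitive sphere actions, Onishchik's tables, and unit-tangent-bundle identifications.

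Relatedly, the ``exhaustive enumeration of admissible group diagrams'' that you yourself flag as the main obstacle is precisely the content your proposal leaves unsupplied: without Frank's theorem (or an equivalent structural input) the Poincar\'e-polynomial identity from Mayer--Vietoris and Gysin does not reduce the possible triples $H\subseteq K^\pm\subseteq G$ to a finite list, since $G$ is a priori arbitrary and the same orbit rational types are realized by infinitely many pairs. Two further omissions: your Gysin/Mayer--Vietoris bookkeeping tacitly assumes the singular orbits are orientable, whereas the paper must (and does) treat the non-orientable cases separately via the twisted Grove--Halperin fiber (Propositions \ref{pro:bothsingnonorientable} and \ref{pro:onesingnonorientable}); and the primitive/non-primitive dichotomy, which governs whether Frank's theorem even applies and which produces the bundle argument above, does not appear in your plan. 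The final step (equivariant rigidity for standard parameters via Uchida--Iwata, and the linear models on the Grassmannian and $\Gtwo/\SO(4)$) is fine as a citation of known classifications, but the middle of the argument has a genuine gap.
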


The rigidity statement for $n = 1$ answers a question posed by Amann and the second author about whether a simply connected cohomogeneity one manifold with Euler characteristic two must be a sphere (see \cite[Corollary C]{AmannKennard17}). Previously Straume classified cohomogeneity one actions on homotopy spheres and Asoh on mod two homology spheres (see \cite{Asoh81,Asoh83,Straume96}). Theorem \ref{thm:QP} extends these rigidity results in the even-dimensional case. As for the other cases with standard parameters, the rigidity statement is not new (see Uchida \cite{Uchida77} and Iwata \cite{Iwata78,Iwata81}). 

Our approach also allows us to nearly classify the diffeomorphism type of cohomogeneity one manifolds with even dimension and four-periodic rational cohomology (see Section \ref{sec:applications} for a definition). This cohomological condition arises, for example, by way of Wilking's connectedness lemma in the context of the Grove symmetry program, in which Riemannian manifolds with positive or non-negative sectional curvature and large symmetry are examined (see \cite{Grove17, Ziller14} for surveys). In this context, it is also natural to study homogeneous spaces, cohomogeneity one manifolds, and quotients thereof, as they have provided numerous examples of manifolds admitting positive or non-negative curvature (see \cite{Ziller07} for a survey and \cite{Dearricott11,GroveVerdianiZiller11,GoetteKerinShankar-pre} for more examples). Homogeneous spaces and, more generally, biquotients with four-periodic rational cohomology were classified by the first author (see \cite{DeVito18}). Our second main result is a step toward an analogous classification for cohomogeneity one manifolds.
\begin{main}\label{thm:4periodic}
A simply connected, closed manifold with the rational cohomology of $\s^2 \times \HP^n$ admits a cohomogeneity one action if and only if it is diffeomorphic to  $\s^2\times \HP^n$, $\s^2\times (\Gtwo/\SO(4))$, or the unique linear non-trivial $\HP^n$ bundle over $\s^2$.
\end{main}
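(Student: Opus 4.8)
The plan is to prove the two implications separately, with nearly all of the work going into the ``only if'' direction.

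For the ``if'' direction I would exhibit the three actions by hand. On $\s^2\times\HP^n$ and on $\s^2\times(\Gtwo/\SO(4))$, take the product of the cohomogeneity one rotation action of $\SO(2)$ on $\s^2$ with a transitive action on the remaining factor (by $\Sp(n+1)$, respectively $\Gtwo$); each product is cohomogeneity one with positive Euler characteristic. For the non-trivial linear $\HP^n$-bundle over $\s^2$, I would use that the isometry group of $\HP^n$ is $\Sp(n+1)/\{\pm I\}$, which has fundamental group $\Z_2$, so that up to isomorphism there is exactly one non-trivial such bundle; a cohomogeneity one action then comes from a group diagram in which one singular isotropy is the ``product'' subgroup and the other is a graph-type subgroup twisting the circle factor along the generator of $\pi_1\big(\Sp(n+1)/\{\pm I\}\big)$. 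A short check shows that both singular orbits are copies of $\HP^n$, that both normal spheres are circles, and that the resulting total space is the non-trivial bundle; since the latter is distinguished from $\s^2\times\HP^n$ by, say, $w_2$ of its tangent bundle, the three manifolds are pairwise non-diffeomorphic.

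For the ``only if'' direction, suppose $M^{4n+2}$ is simply connected and closed with $H^*(M;\Q)\cong H^*(\s^2\times\HP^n;\Q)$ and that a compact connected Lie group $G$ acts on it with cohomogeneity one; after the standard reductions I may assume the action is almost effective with no orbit equivalent proper subaction. Since $\chi(M)=2n+2>0$, the orbit space is a closed interval, and we obtain a group diagram $H\subseteq K_\pm\subseteq G$ with normal spheres $K_\pm/H\cong\s^{\ell_\pm}$, singular orbits $B_\pm=G/K_\pm$, and $\dim B_\pm+\ell_\pm=4n+1$. The first and main step is to cut down the list of possible diagrams. Writing $M$ as the union of the two disk bundles over $B_\pm$ glued along $G/H$, I would play the Betti numbers of $M$ (all of which are $0$ or $1$), the $4$-periodicity of $H^*(M;\Q)$, and especially the relation $x^2=0$ for the degree-two generator off against the Mayer--Vietoris sequence of this union and the Gysin sequences of the sphere bundles $\s^{\ell_\pm}\to G/H\to B_\pm$. (It is precisely the relation $x^2=0$ that separates the present situation from the $\QP^{2n+1}_2$'s already handled in Theorem~\ref{thm:QP}.) Along the way I would invoke the classifications of homogeneous spaces with $4$-periodic rational cohomology (Kapovitch--Ziller \cite{KapovitchZiller04}, Totaro \cite{Totaro02}, DeVito \cite{DeVito18}) and Theorem~\ref{thm:QP} itself to identify the homogeneous singular orbits. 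The aim of this step is to show that $M$ is diffeomorphic to a fiber bundle over $\s^2$ whose fiber $F$ is a $\QP^n_4$ carrying a cohomogeneity one or transitive action, hence, by Theorem~\ref{thm:QP} (or the homogeneous classification), is $\HP^n$ or, when $n=2$, $\Gtwo/\SO(4)$.

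To conclude, I would classify $F$-bundles over $\s^2$ by the fundamental group of the structure group. If $F=\Gtwo/\SO(4)$, the structure group may be taken inside $\Gtwo$, which is simply connected, so the bundle is trivial and $M\cong\s^2\times(\Gtwo/\SO(4))$. If $F=\HP^n$, the structure group lies in $\Sp(n+1)/\{\pm I\}$, whose fundamental group is $\Z_2$, so $M$ is either $\s^2\times\HP^n$ or the unique non-trivial linear $\HP^n$-bundle over $\s^2$; comparison with the ``if'' direction then finishes the proof. The hard part will be the diagram analysis of the middle step: systematically ruling out the configurations in which a singular orbit is a point, a rational sphere, or a rational complex projective space, or in which a normal sphere has the ``wrong'' dimension, so as to force the fiber bundle structure over $\s^2$; and then, working at the smooth rather than the merely rational level, correctly recognizing when the bundle is the non-trivial one.
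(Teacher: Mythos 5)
Your ``if'' direction is essentially the same as the paper's (product actions, plus an explicit action on the twisted bundle, with the two linear $\HP^n$-bundles over $\s^2$ distinguished by a Stiefel--Whitney class), so that part is fine in outline. The genuine gap is in the middle step of your ``only if'' direction: you assume that cohomological bookkeeping (Mayer--Vietoris for the double disc bundle decomposition, Gysin sequences, $4$-periodicity, and $x^2=0$) will force $M$ to be a fiber bundle over $\s^2$ with fiber a $\QP^n_4$. But a cohomogeneity one manifold fibers over a homogeneous space $G/L$ only when the action is \emph{non-primitive}, i.e.\ when some $L\subsetneq G$ contains both $K^\pm$; for a primitive action there is no such bundle to be produced, and nothing in your sketch addresses that possibility. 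This is exactly where the paper's real work lies: in the non-primitive case the bundle $M=G\times_L M_L\to G/L$ exists, but one must still exclude bases other than $\s^2$ (done via the Kapovitch--Ziller classification together with a triviality lemma for bundles with low-dimensional fiber over $\s^{2l}$ and $\OP^2$), while in the primitive case the paper proves that no rational $\s^2\times\HP^n$ occurs at all, and this requires Frank's classification of primitive cohomogeneity one manifolds with positive Euler characteristic combined with the Grove--Halperin homotopy-fiber model and the connecting-homomorphism computations of Proposition \ref{connecthom}.

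Plain Mayer--Vietoris/Gysin arguments are too weak for that primitive step, not least because the dangerous candidates (for instance the $\SU(m)\times\SU(2)$ diagrams arising in Case 2 of Frank's theorem) yield rational $\CP^{2n+1}$'s, which have exactly the same Betti numbers as $\s^2\times\HP^n$ and differ only in the ring relation $x^2=0$; the paper does not exclude them by a ring-level Mayer--Vietoris computation but by pinning down the group diagrams and identifying the actions with Uchida's linear actions on complex projective spaces and Grassmannians. Your sketch also subsumes the non-orientable singular orbit cases and the case where both codimensions equal $2$ under ``diagram analysis'' without a mechanism, whereas these need the separate arguments of Propositions \ref{pro:bothsingnonorientable}, \ref{pro:onesingnonorientable}, and \ref{pro:s1s1}. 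So while your endgame (classifying $\HP^n$- and $(\Gtwo/\SO(4))$-bundles over $\s^2$ by $\pi_1$ of the structure group) matches the paper, the reduction to that situation is not justified as proposed.
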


A cohomogeneity one structure implies rational ellipticity (see Grove-Halperin \cite{GroveHalperin87}), and it is straightforward to show that an even-dimensional, simply connected, closed manifold with four-periodic rational cohomology and a cohomogeneity one action is a $\QP^1_k$, $\QP^n_2$, or $\QP^n_4$ or a rational $\s^2 \times \HP^n$ or a rational $\s^3 \times \s^3$ (see Proposition \ref{pro:model4periodic}). Hence Theorems \ref{thm:QP} and \ref{thm:4periodic} imply the following:
 
\begin{maincorollary}\label{cor:4periodic}
A simply connected, closed manifold with four-periodic rational cohomology and positive Euler characteristic admits a cohomogeneity one action if and only if it is diffeomorphic to $\s^n$, $\CP^n$, $\HP^n$, 
	 $\SO(2n+1)/\SO(2) \times \SO(2n-1)$, $\Gtwo/\SO(4)$,  $\s^2 \times \HP^n$, $\s^2 \times (\Gtwo/\SO(4))$,  or the non-trivial linear $\HP^n$-bundle over $\s^2$.
\end{maincorollary}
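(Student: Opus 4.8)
The plan is to derive Corollary~\ref{cor:4periodic} directly from Theorems~\ref{thm:QP} and~\ref{thm:4periodic} together with Proposition~\ref{pro:model4periodic}: once these are in hand, what remains is to match the diffeomorphism types they produce against the cohomological hypothesis. I will treat the two implications separately. The backward implication asserts that each of the eight listed manifolds admits a cohomogeneity one action. For the rank one symmetric spaces $\s^n$, $\CP^n$, $\HP^n$, for the oriented Grassmannian $\SO(2n+1)/\SO(2)\times\SO(2n-1)$, and for $\Gtwo/\SO(4)$ this is classical --- for instance the standard linear actions with two singular orbits, and left multiplication by $\SU(3)\subseteq\Gtwo$ in the last case --- and in any event these actions are part of the data going into Theorem~\ref{thm:QP}; the remaining three manifolds are precisely those output by Theorem~\ref{thm:4periodic}. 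So the content is the forward implication.

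For the forward implication, let $M$ be simply connected, closed, with four-periodic rational cohomology, $\chi(M) > 0$, and a cohomogeneity one action. Since a closed odd-dimensional manifold has vanishing Euler characteristic, $M$ is even-dimensional, so Proposition~\ref{pro:model4periodic} applies and shows that $M$ is rationally a $\QP^1_k$, $\QP^n_2$, $\QP^n_4$, $\s^2\times\HP^n$, or $\s^3\times\s^3$. The case $\s^3\times\s^3$ is excluded because $\chi(\s^3\times\s^3)=0$, and a $\QP^1_k$ with $k$ odd is an odd-dimensional rational sphere, hence also has vanishing Euler characteristic; both are ruled out by $\chi(M)>0$. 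Thus $M$ is a $\QP^1_k$ with $k$ even, a $\QP^n_2$, a $\QP^n_4$, or a rational $\s^2\times\HP^n$.

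In the first three cases $M$ is a $\QP^n_k$ with $k$ even, so Theorem~\ref{thm:QP} gives that $M$ is diffeomorphic to a compact rank one symmetric space, to $\SO(2m+1)/\SO(2)\times\SO(2m-1)$, or to $\Gtwo/\SO(4)$. It remains to intersect this with the cohomological constraint recorded in the previous paragraph. The simply connected compact rank one symmetric spaces are $\s^k$, $\CP^n$, $\HP^n$, and the Cayley plane $\OP^2$; but $\OP^2$ is a $\QP^2_8$ and so does not appear among the rational types of Proposition~\ref{pro:model4periodic}. The Grassmannian $\SO(2m+1)/\SO(2)\times\SO(2m-1)$ is the complex quadric of odd complex dimension $2m-1$, hence a $\QP^{2m-1}_2$, and $\Gtwo/\SO(4)$ has the rational cohomology of $\HP^2$, hence is a $\QP^2_4$. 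Consequently in these cases $M$ is diffeomorphic to $\s^n$ (with $n$ even, since $\chi(M)>0$), $\CP^n$, $\HP^n$, $\SO(2m+1)/\SO(2)\times\SO(2m-1)$, or $\Gtwo/\SO(4)$. In the remaining case $M$ is a rational $\s^2\times\HP^n$, and Theorem~\ref{thm:4periodic} gives that $M$ is diffeomorphic to $\s^2\times\HP^n$, $\s^2\times(\Gtwo/\SO(4))$, or the non-trivial linear $\HP^n$-bundle over $\s^2$. Taking the union of the two lists yields exactly the manifolds in the statement.

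Given Theorems~\ref{thm:QP} and~\ref{thm:4periodic} and Proposition~\ref{pro:model4periodic}, there is no substantial obstacle here: the proof is essentially bookkeeping. The one point that requires care is tracking which of the spaces appearing in the conclusion of Theorem~\ref{thm:QP} actually carry four-periodic rational cohomology --- equivalently, for each admissible pair $(n,k)$, deciding which of the three possibilities in Theorem~\ref{thm:QP} is a $\QP^n_k$ --- which is why the argument above keeps track of the parity of the complex dimension of the quadric and of the degree of the periodicity generator, so that, for example, the Cayley plane is correctly ruled out and the quadrics contribute only to the $\QP^n_2$ case.
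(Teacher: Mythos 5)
Your argument is correct and is essentially the paper's own derivation: the corollary is obtained by combining Proposition \ref{pro:model4periodic} with Theorems \ref{thm:QP} and \ref{thm:4periodic}, discarding $\s^3\times\s^3$ (and odd-dimensional rational spheres) via $\chi(M)>0$ and the Cayley plane via its rational type, exactly as you do. The one step to state explicitly is that Proposition \ref{pro:model4periodic} is applied through its rational ellipticity clause, which is supplied by the cohomogeneity one structure via Grove--Halperin \cite{GroveHalperin87}.
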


Note that, whereas the first seven examples in Corollary \ref{cor:4periodic} are symmetric spaces, the last one is not even homogeneous. On the other hand, it does arise as a biquotient. In fact, the lists of examples in Theorem \ref{thm:QP} and Corollary \ref{cor:4periodic} form proper subsets of the corresponding lists in the biquotient case (see \cite{KapovitchZiller04,KapovitchZiller04-arxiv} and \cite{DeVito18}). 

Finally we remark on our methods. While the use of rational homotopy theory was prevalent in the biquotient classifications cited above, it was not used in the cohomogeneity one results, nor was it used in the important work of Frank \cite{Frank13} that we apply here. Grove and Halperin \cite{GroveHalperin87} developed the Sullivan model for a cohomogeneity one manifold in terms of the homotopy fiber $\mathcal F$ of the inclusion $G/H \to M$, where $G/H$ is a principal orbit. One important consequence of their work is the rational ellipticity of cohomogeneity one manifolds. One of our new tools is a computation of the connecting homomorphisms of the associated fibration, which hold generally for cohomogeneity one manifolds with positive Euler characteristic (cf. \cite{DeVitoGalazGarciaKerin-pre}). Since $\mathcal F$ is computed in \cite{GroveHalperin87} and since we are assuming we know the rational homotopy of $M$, these computations imply strong restrictions on the groups $G$ and $H$. Combined with other  facts about cohomogeneity one manifolds, we are able to generalize and significantly shorten the proofs of some results in Uchida and Iwata's work, especially in cases where one or both of the singular orbits is non-orientable (see Propositions \ref{pro:bothsingnonorientable} and \ref{pro:onesingnonorientable}). 

After covering some preliminaries on cohomogeneity one manifolds in Section \ref{sec:preliminaries}, we describe in Section \ref{sec:RHT} the Grove-Halperin model and carry out the rational homotopy computations described in the previous paragraph. In Section \ref{sec:non-primitive}, we prove Theorems \ref{thm:QP} and \ref{thm:4periodic} in the non-primitive case, in which the manifold naturally fibers over a homogeneous space with a cohomogeneity one fiber. In Section \ref{sec:primitive}, we describes Frank's work on primitive cohomogeneity one actions on manifolds with positive Euler characteristic and elaborate on his work in preparation for the proofs of the main theorems. We then prove Theorems \ref{thm:QP} and \ref{thm:4periodic} in Sections \ref{sec:ProofQP} and \ref{sec:Proof4periodic}, respectively.

\subsection*{Acknowledgements}  Both authors would like to thank Vitali Kapovitch and Martin Kerin for helpful discussions at the early stages of this project. These conversations happened at MSRI and were in part supported by the institute through NSF Grant DMS-1440140. The second author is grateful for the support provided by the NSF through Grant DMS-1708493 and by the University of Oklahoma Research Council.

\section{Notation and preliminaries}\label{sec:preliminaries}

\subsection{Notation and conventions}
Let $M$ be a smooth, simply connected, closed manifold with a smooth, almost effective cohomogeneity one action by a connected Lie group $G$. The orbit space of this action on a compact manifold $M$ is either a circle $\s^1$ or the interval $[0,1]$. The former case cannot happen if $M$ is simply connected, so we always assume $M/G = [0,1]$.

It is well known that such a cohomogeniety one action is characterized up to equivariant diffeomorphism by its group diagram $H \subseteq K^\pm \subseteq G$, where $G/H$ is a principal orbit, and $G/K^\pm$ denote the two singular orbits.  Following Frank \cite{Frank13}, we always assume the action is minimal in the sense that no normal subgroup of $G$ acts orbit equivalently. Note that passing to such a subaction for a non-minimal action would not change the diffeomorphism type of the manifold. Moreover, by replacing $G$ by a finite cover, we will always assume $G$ has the form $G = G'\times \gT^m$ for a connected, simply connected compact Lie group $G'$.

It follows from the general structure theory of cohomogeneity one manifolds that $M$ has a double disc bundle decomposition, i.e., $M$ is the union of two disc bundles $D(G/K^\pm)$ over the singular orbits $G/K^\pm$. The common boundary of these disc bundles has the topology of the principal orbits $G/H$. It also follows that the spaces $K^\pm/H$ are spheres of dimension one smaller than the codimension $k_\pm$ of the singular orbits $G/K^\pm \subseteq M$.

\subsection{Positive Euler characteristic}\label{sec:poseul}
Applying the Mayer--Vietoris sequence to the doublde disc bundle decomposition of $M$, we have the following relation:
	\[\chi(M) = \chi(G/K^+) + \chi(G/K^-) - \chi(G/H).\]
When $M$ has positive Euler characteristic and hence even dimension, $G/H$ has odd dimension $\dim(M) - 1$ and hence vanishing Euler characteristic. Moreover, $\chi(G/K^\pm)$ is non-negative and is positive precisely when $K^\pm$ has rank equal to $\rank(G)$. Consequently $\chi(M) > 0$ only if at least one of the $K^\pm$ has rank equal to $\rank(G)$. Since each $K^\pm / H$ is a sphere, it follows from the classification of homogeneous spheres (see Section \ref{transsphere}) that $H \subseteq K^\pm$ has corank one or zero. Putting these considerations together, we have the following bounds on the ranks of groups in the group diagram:
	\begin{equation}\label{eqn:ranks}
	\rank(G) - 1 = \rank(H) \leq \rank(K^\pm) \leq \rank(G).
	\end{equation}

\subsection{Transitive actions on spheres}\label{transsphere}

We recall the well known results of Borel, Montgomery, and Samelson.

\begin{theorem}\label{thm:transsphere}
Suppose $K$ is a connected, compact Lie group which acts effectively and transitively on a sphere $\s^m$.  If $H$ denotes the isotropy subgroup, then Table \ref{table:transsphere} summarizes the possibilities for the pair $(K,H)$.
	\begin{center}
	\begin{table}[ht]
	\begin{tabular}{|c|c|c|}
	\hline
	$K$ & $H$ & $m$\\\hline
	$\SO(n+1)$ & $\SO(n)$ & $n$\\
	$\SU(n+1)$ & $\SU(n)$ & $2n+1$\\
	$\Un(n+1)$ & $\Un(n)$ & $2n+1$\\
	$\Sp(n+1)$ & $\Sp(n)$ & $4n + 3$\\
	$\Sp(n+1)\times \sone$ & $\Sp(n)\times \sone$ & $4n+3$\\
	$\Sp(n+1)\times \Sp(1)$ & $\Sp(n)\times \Sp(1) $ & $4n+3$\\
	$\Gtwo$ & $\SU(3)$ & $6$\\
	$\Spin(7)$ & $\Gtwo$ & $7$\\
	$\Spin(9)$ & $\Spin(7)$ & $15$\\\hline
	\end{tabular}\caption{Effective transitive actions on spheres}\label{table:transsphere}
	\end{table}
	\end{center}
\end{theorem}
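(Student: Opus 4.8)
This is a classical theorem of Montgomery--Samelson and Borel; the approach I would take is to feed the homotopy exact sequence of the bundle $H \to K \to \s^m$ into the classification of compact simple Lie groups, in the spirit of the rational homotopy methods used elsewhere in this paper. The cases $m \le 1$ are disposed of directly: a connected group cannot act transitively on $\s^0$, and a compact connected group acting effectively on $\s^1$ must be $\SO(2)$. So assume $m \ge 2$ and write $\s^m = K/H$ with $H$ the isotropy group. Since $\pi_1(\s^m) = 0$ and $K$ is connected, the homotopy exact sequence forces $\pi_0(H) = 0$, so $H$ is connected; and after replacing $K$ by a finite cover---harmless here, since it affects only effectiveness, which I restore at the end---I may assume $K = K_0 \times T^j$ with $K_0$ simply connected and semisimple.

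The crux is the long exact sequence of \emph{rational} homotopy groups of $H \to K \to \s^m$. Both $\pi_\ast(K)\otimes\Q$ and $\pi_\ast(H)\otimes\Q$ are concentrated in odd degrees, with exactly one generator in each degree $2d_i - 1$, where the $d_i$ are the degrees of the basic invariants; and $\pi_\ast(\s^m)\otimes\Q$ is $\Q$ in degree $m$ when $m$ is odd, and $\Q$ in degrees $m$ and $2m-1$ when $m$ is even. Reading the sequence off degree by degree yields: if $m$ is odd, then $\rank K = \rank H + 1$ and the multiset of generator degrees of $K$ is that of $H$ with one extra entry $m$ (equivalently, the fibration is rationally totally non-cohomologous to zero, so $P_K(t) = (1+t^m)\,P_H(t)$); if $m$ is even, then $\rank K = \rank H$ and the generator degrees of $K$ are those of $H$ with one entry $m-1$ replaced by $2m-1$. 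In both cases $\dim K - \dim H = m$, and for $m$ even the identity $\chi(\s^m) = 2$ reads $|W(K)| = 2\,|W(H)|$ on Weyl groups.

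Armed with these constraints I would run the enumeration over the compact simple groups and the few products that can occur (the rank and degree conditions pin these down to the forms appearing in the last rows of the table, with circle and $\Sp(1)$ factors accounted for there). The condition that the generator degrees of $K$ differ from those of a compact subgroup $H$ by a single inserted odd degree $m$---or, for $m$ even, by the single swap $m-1 \leftrightarrow 2m-1$ together with $|W(K)| = 2|W(H)|$---has only finitely many solutions, each of which either appears in Table \ref{table:transsphere} or is one of a short list of \emph{rational near-misses}, the classical examples being $\Gtwo/\SU(2)$ and $\Spin(7)/\Sp(2)$, both of which have the rational cohomology of $\s^{11}$. These are eliminated using \emph{integral} homotopy: since $\s^m$ is $(m-1)$-connected, $H \hookrightarrow K$ is an isomorphism on $\pi_i$ for $i < m-1$, yet already $\pi_4(\SU(2)) = \Z/2 \neq 0 = \pi_4(\Gtwo)$, with the analogous discrepancy for $\Sp(2) \subseteq \Spin(7)$, gives a contradiction. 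For the genuine entries one checks that $K/H$ is a round sphere by recognizing the linear isotropy representation of $H$ on $T_{eH}(K/H)$ as the standard representation on $\R^m$; finally, restoring the effectiveness hypothesis collapses the redundant covers (most notably $\Spin(n) \to \SO(n)$), leaving exactly the nine families of Table \ref{table:transsphere}.

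The genuinely delicate step is this last enumeration: the rational constraint is only necessary, so the proof turns on a finite but fiddly case analysis---tracking products and, above all, excluding the near-misses---before the list settles into Table \ref{table:transsphere}.
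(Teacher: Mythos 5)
The paper does not prove Theorem \ref{thm:transsphere} at all: it is recorded as the classical classification of Borel and Montgomery--Samelson and used as a black box, so there is no in-paper argument to measure yours against. Your outline is the standard modern route to that classical result: connectedness of $H$ from the homotopy sequence, then the rational homotopy bookkeeping for $H \to K \to \s^m$ (your degree/rank statements for $m$ odd and $m$ even, the identity $\dim K - \dim H = m$, and $|W(K)| = 2|W(H)|$ in the equal-rank case are all correct), followed by an enumeration with integral corrections.

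The gap is that the decisive enumeration is asserted rather than carried out, and as sketched it would not quite close. First, the list of homogeneous rational spheres to be excluded is longer than $\Gtwo/\SU(2)$ and $\Spin(7)/\Sp(2)$: it also contains, for instance, $\SU(3)/\SO(3) \simeq_\Q \s^5$, the Berger space $\SO(5)/\SO(3)_{\mathrm{irr}} \simeq_\Q \s^7$, and the Stiefel manifolds $\SO(2k+1)/\SO(2k-1) \simeq_\Q \s^{4k-1}$. The last family exposes a sharper defect in your exclusion test: the abstract pair $(\Spin(9),\Spin(7))$ occurs both in Table \ref{table:transsphere} (spin embedding, giving $\s^{15}$) and as a near miss (standard embedding, giving the unit tangent bundle of $\s^8$, again a rational $\s^{15}$), so a criterion such as ``$\pi_4(H) \neq \pi_4(K)$'' that only sees the abstract isomorphism types of $H$ and $K$ cannot separate them; one must compute the maps induced by the specific embedding on integral homotopy or cohomology. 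Second, the reduction of non-simple $K$ to the three product families in the table is waved at but needs a genuine argument (e.g.\ that a simple normal factor already acts transitively, and then a determination of all intermediate groups acting on the same sphere); this is exactly where circle and $\Sp(1)$ factors enter. None of this is fatal --- filling it in is precisely the content of the Montgomery--Samelson/Borel theorem --- but as written the proposal stops where the real work begins, which is presumably why the paper cites the classification rather than reproving it.
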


\subsection{Codimensions of singular orbits}
Note that $k_\pm$, the codimension of the singular orbit $G/K^\pm$, is even if and only if $\rank(K^\pm) = \rank(G)$. Indeed, this follows from Equation \ref{eqn:ranks} together with the fact that a Lie group's rank and dimension have the same parity. The following lemma follows from the double disc bundle decomposition and is proven in \cite{Hoelscher10,GroveWilkingZiller08}.

\begin{lemma}\label{lem:cohomtop}
There are no exceptional orbits, i.e., both of the codimensions $k_\pm \geq 2$. Moreover, we have the following:
	\begin{enumerate}
	\item If $k_\pm \geq 3$, then $G/K^\mp$ is simply connected and $K^\mp$ is connected.
	\item If both $k_\pm \geq 3$, then $G/H$ and $G$ are simply connected and $H$ is connected.
	\end{enumerate}
\end{lemma}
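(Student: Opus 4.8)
This is a standard fact about cohomogeneity one manifolds (see \cite{Hoelscher10,GroveWilkingZiller08}); the plan is to read off $\pi_1(M)$ from the double disc bundle decomposition via van Kampen's theorem and to feed in the homotopy long exact sequences of the sphere bundles $K^\pm/H = \s^{\,k_\pm-1} \embedded G/H \to G/K^\pm$. Write $M = B^-\cup B^+$ with $B^\pm = D(G/K^\pm)$. Each $B^\pm$ deformation retracts onto its zero section $G/K^\pm$, the intersection $B^-\cap B^+$ is (after thickening) homotopy equivalent to the principal orbit $G/H$, which is path-connected since $G$ is connected, and the inclusion $G/H\embedded B^\pm$ is homotopic to the bundle projection $G/H\to G/K^\pm$. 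Van Kampen's theorem therefore gives $\pi_1(M)\cong \pi_1(G/K^-)\ast_{\pi_1(G/H)}\pi_1(G/K^+)$, with the two amalgamating homomorphisms induced by these projections.

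For the absence of exceptional orbits, suppose $k_-=1$. Then $K^-/H=\s^0$ and $G/H\to G/K^-$ is a double cover; it is connected because $G$ is, so $\pi_1(G/H)\embedded\pi_1(G/K^-)$ is injective with image of index two. Since each factor of an amalgamated product embeds into it, $\pi_1(M)$ then contains a copy of $\pi_1(G/K^-)$, a group of order at least two, contradicting $\pi_1(M)=1$. Hence $k_\pm\geq 2$.

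For part (1), assume $k_-\geq 3$, so the fiber $\s^{\,k_--1}$ is connected and simply connected; the long exact sequence of $\s^{\,k_--1}\to G/H\to G/K^-$ shows that $\pi_1(G/H)\to\pi_1(G/K^-)$ is an isomorphism. An amalgamated product in which one of the two structure maps is an isomorphism collapses onto the opposite factor, so $\pi_1(M)\cong\pi_1(G/K^+)$, whence $G/K^+$ is simply connected; connectedness of $K^+$ then follows from the exactness of $\pi_1(G/K^+)\to\pi_0(K^+)\to\pi_0(G)$ for the bundle $K^+\to G\to G/K^+$. For part (2), running this on both sides makes both structure maps isomorphisms, so $\pi_1(M)\cong\pi_1(G/H)$, giving $\pi_1(G/H)=1$; then $H$ is connected by the exactness of $\pi_1(G/H)\to\pi_0(H)\to\pi_0(G)$.

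The one point that is not pure bookkeeping — and the step I expect to be the crux — is that $G$ itself is simply connected, since a priori $G=G'\times\gT^m$ could carry a torus factor. Here I would use minimality of the action. Let $p\colon G\to\gT^m$ be the projection; the image $\gT'=p(H)$ is a subtorus (a closed connected subgroup of $\gT^m$, using that $H$ is connected), and $G/H$ fibers over $\gT^m/\gT'$ with connected fiber $(G'\times\gT')/H$, so $\pi_1(\gT^m/\gT')$ is a quotient of $\pi_1(G/H)=1$ and therefore $\gT'=\gT^m$. Consequently $G=(G'\times\{e\})\cdot H$, and since $H\subseteq K^\pm$ this same identity yields $G=(G'\times\{e\})\cdot K^\pm$ as well; hence the normal subgroup $G'\times\{e\}$ acts transitively on $G/H$ and on each $G/K^\pm$, i.e.\ it has the same orbits as $G$ throughout $M$. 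Minimality of the action forces $G'\times\{e\}=G$, so $\gT^m$ is trivial and $G=G'$ is simply connected.
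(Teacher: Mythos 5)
Your overall route---van Kampen applied to the double disc bundle decomposition, the homotopy exact sequences of the sphere bundles $\s^{k_\pm-1}\to G/H\to G/K^\pm$ and of $K^\pm\to G\to G/K^\pm$, and minimality to kill the torus factor of $G=G'\times\gT^m$---is the standard argument behind the references the paper cites for this lemma (the paper gives no proof of its own). Parts (1) and (2) are carried out correctly, and your treatment of the point you rightly single out as the crux, that $G$ itself is simply connected, is sound: $p(H)=\gT^m$ follows from $\pi_1(G/H)=1$ and connectedness of $H$, and then $G'\times\{e\}$ acts orbit equivalently, so minimality forces $m=0$.

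The one step that fails as stated is in the exclusion of exceptional orbits: ``each factor of an amalgamated product embeds into it'' is only valid when \emph{both} amalgamating homomorphisms are injective. In your situation $\pi_1(G/H)\to\pi_1(G/K^-)$ is injective of index two, but $\pi_1(G/H)\to\pi_1(G/K^+)$ need not be: when $k_+=2$ the fiber circle of $G/H\to G/K^+$ can map nontrivially to $\pi_1(G/H)$, and then the van Kampen pushout is a proper quotient of $\pi_1(G/K^-)$. (Compare the rotation action on $\RP^2$, with exceptional orbit $\RP^1$ and a fixed point of codimension two: there $\pi_1(G/K^-)\cong\Z$ while the pushout is $\Z_2$, so the factor certainly does not embed.) The conclusion survives with a one-line patch: the image of $\pi_1(G/H)$ in $\pi_1(G/K^-)$ has index two, hence is normal, so the quotient homomorphism $\pi_1(G/K^-)\to\Z_2$ and the trivial homomorphism on $\pi_1(G/K^+)$ agree on $\pi_1(G/H)$ and therefore induce, by the universal property of the pushout, a surjection $\pi_1(M)\to\Z_2$ (equivalently, $M$ admits a connected double cover), contradicting $\pi_1(M)=1$ for every value of $k_+$. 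With that repair your proof is complete.
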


\section{Restrictions via rational homotopy theory}\label{sec:RHT}

The main results of this section provide proofs of Theorems \ref{thm:QP} and \ref{thm:4periodic} in three special cases corresponding to singular orbits of small codimension (see Propositions \ref{pro:bothsingnonorientable}, \ref{pro:onesingnonorientable}, and \ref{pro:s1s1}). Proposition \ref{pro:orbittype} is also proved and provides additional information about the codimensions $k_\pm$ of the singular orbits $G/K^\pm$ when $M$ is a $\QP^n_k$.

For a connected, nilpotent space $X$, we use the shorthand $\pi_m^\Q(X)$ to denote the homotopy group of degree $m$ of the rationalization $X_\Q$. For $m \geq 2$, $\pi_m^\Q(X)$ coincides with $\pi_m(X)\otimes \mathbb{Q}$. For $m = 1$, $\pi_1^\Q(X)$ is the Malcev completion of $\pi_1(X)$, a nilpotent group, and its rank is the sum of the ranks of the abelian groups $\Gamma_{i-1}/\Gamma_i$ in the central series
	\[\pi_1(X) = \Gamma_0 \supseteq \Gamma_1 \cdots \supseteq \Gamma_n = 1\]
where $\Gamma_i = [\Gamma, \Gamma_i]$. For our purposes, it will suffice to deal with the following two cases:
	\begin{itemize}
	\item $\pi_1(X)$ is finite, in which case $\pi_1^\Q(X) = 0$, and
	\item $\pi_1(X)$ is abelian and finitely generated, in which case $\pi_1^\Q(X) \cong \pi_1(X) \tensor \Q$.
	\end{itemize}
We will also use the notation $\pi_{odd}^\Q(X)$ for $\bigoplus_{m=0}^\infty \pi_{2m+1}^\Q(X)$ and similarly for $\pi_{even}^\Q(X)$. Recall that a nilpotent space $X$ is rationally elliptic if both $H^\ast(X;\Q)$ and $\pi_\ast^\Q(X)$ are finite dimensional vector spaces.

\subsection{Grove-Halperin model of cohomogeneity one manifolds}
To each nilpotent space $X$, we may associate a commutative graded differential algebra, the minimal Sullivan algebra, which characterizes the rational homotopy type of $X$. Grove and Halperin \cite{GroveHalperin87} proved that, like homogeneous spaces and biquotients, cohomogeneity one manifolds are also rationally elliptic.  They analyze the Sullivan minimal model of a cohomogeneity one manifold with group diagram $H \subseteq K^\pm \subseteq G$, and prove the following.

\begin{theorem}[Grove--Halperin]\label{GHtype}
Let $\mathcal F$ denote the homotopy fiber of the inclusion $G/H \to M$ of the principal orbit of a cohomogeneity one manifold of positive Euler characteristic. There is a finite cover $\overline{\mathcal F} \to \mathcal F$ such that
	\[\overline{\mathcal F} \simeq_\Q 
		\left\{ \begin{array}{lcl}
		\s^3 \times \s^3 \times \Omega \s^7						&\mathrm{if}&	h = 2\\
		\s^1 \times \s^{2k_- - 1} \times \Omega \s^{2k_- + 1}			&\mathrm{if}&	h=1\\
		\s^{k_+-1} \times \s^{k_- -1} \times \Omega \s^{k_+ + k_- -1}	&\mathrm{if}& h = 0
		\end{array}\right.\]
where $h\in\{0,1,2\}$ is the number of non-orientable singular orbits. Moveover, $\overline{\mathcal F} = \mathcal F$ if $h = 0$, $\overline{\mathcal F} \to \mathcal F$ is two-to-one if $h = 1$, and $\overline{\mathcal F} \to \mathcal F$ is the universal cover and $\pi_1(\mathcal F)$ is the quaternion group $Q_8$ if $h = 2$.  Finally, if $h=1$, then $k_-$ must be even.
\end{theorem}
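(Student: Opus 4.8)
The plan is to turn the double disc bundle decomposition of $M$ into a computation of minimal Sullivan models and to extract $\mathcal F$ from the resulting model of $M$. Recall that $M=D(G/K^-)\cup_{G/H}D(G/K^+)$, that each disc bundle deformation retracts onto its singular orbit, and that by the classification of homogeneous spheres (Theorem~\ref{thm:transsphere}) the two boundary maps $p_\pm\colon G/H\to G/K^\pm$ are smooth fiber bundles with fiber $K^\pm/H=\s^{k_\pm-1}$. Hence $M$ is the homotopy pushout of $G/K^-\xleftarrow{p_-}G/H\xrightarrow{p_+}G/K^+$. Since the polynomial de Rham functor carries homotopy pushouts of nilpotent, finite-type spaces to homotopy pullbacks of commutative differential graded algebras, one models $M$ rationally by the homotopy fiber product $\mathcal C_M\simeq\mathcal C_{G/K^-}\times^h_{\mathcal C_{G/H}}\mathcal C_{G/K^+}$. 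The homotopy fiber $\mathcal F$ of $G/H\hookrightarrow M$ then has minimal model $(\Lambda V,\bar d)$, the fiber of a relative Sullivan model $\mathcal C_M\hookrightarrow\mathcal C_M\otimes\Lambda V\xrightarrow{\simeq}\mathcal C_{G/H}$.

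The conceptual core is the orientable case $h=0$. Here I would insert the standard model of a sphere bundle, adjoining to $\mathcal C_{G/K^\pm}$ a generator $e_\pm$ of degree $k_\pm-1$ with $de_\pm$ the rational Euler class $\chi_\pm\in\mathcal C_{G/K^\pm}$ when $k_\pm$ is even (equivalently $\rank K^\pm=\rank G$), plus a companion generator in degree $2k_\pm-3$ when $k_\pm-1$ is even. A Mayer--Vietoris/Koszul computation in the fiber product then shows that the classes pulled back from the common ``base'' become exact in $\mathcal F$; that $e_+$ and $e_-$ survive as closed generators, since each $\chi_\pm$ is hit from the opposite disc bundle and hence is exact in $M$; and that exactly one further generator, in degree $k_++k_--2$, is forced, giving $\mathcal F\simeq_\Q\s^{k_+-1}\times\s^{k_--1}\times\Omega\s^{k_++k_--1}$. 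In particular $\mathcal F$, hence $M$, is rationally elliptic. (The two sphere factors are also visible topologically: since $G/H\to M$ factors through $p_\pm$, there is a fibration $\s^{k_\pm-1}\to\mathcal F\to\operatorname{hofib}(G/K^\pm\hookrightarrow M)$.)

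For $h\ge1$ I would reduce to the orientable case. A non-orientable singular orbit forces the corresponding $K^\pm$ to be disconnected, so by Lemma~\ref{lem:cohomtop} the opposite orbit has codimension exactly $2$; in the notation of the theorem the factor $\s^1$ comes from that codimension-two orbit. A non-orientable orbit, say $G/K^-$, has an orientation double cover $G/\widetilde K^-$ with $H\subseteq\widetilde K^-\subseteq K^-$ of index two and $\widetilde K^-/H=\s^{k_--1}$, over which the sphere bundle is orientable; passing to it replaces $\mathcal F$ by a finite cover $\overline{\mathcal F}$. The nontrivial deck transformation reverses orientation on the normal sphere, hence acts by $-1$ on $\chi_-\in\mathcal C_{G/\widetilde K^-}$, so the $K^-$-level model sees only $\chi_-^2$, in degree $2k_-$; this forces $k_-$ to be even (otherwise $\chi_-^2=0$ by graded commutativity) and replaces $k_-$ by $2k_-$ throughout, giving $\overline{\mathcal F}\simeq_\Q\s^1\times\s^{2k_--1}\times\Omega\s^{2k_-+1}$. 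When $h=2$ the same analysis applies on both sides with $k_+=k_-=2$, so both squared Euler classes lie in degree $4$ and $\overline{\mathcal F}\simeq_\Q\s^3\times\s^3\times\Omega\s^7$; the two orientation involutions interact so as to lift to order-four elements of $\pi_1(\mathcal F)$ with common square generating the center, identifying $\pi_1(\mathcal F)$ with the quaternion group $Q_8$.

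The main obstacle is exactly this non-orientable bookkeeping. The $h=0$ computation is essentially formal once the pushout is set up, but checking that the $-1$ action on Euler classes produces \emph{precisely} the stated rational types for $h=1$ and $h=2$, and above all pinning down $\pi_1(\mathcal F)\cong Q_8$ rather than some abelian or smaller group when $h=2$, requires a careful analysis of the orientation double covers of the two disc bundles and of the resulting non-split extension governed by $\pi_2(G/H)\to\pi_2(M)\to\pi_1(\mathcal F)\to\pi_1(G/H)\to1$.
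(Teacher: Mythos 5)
The paper itself does not prove Theorem \ref{GHtype}; it is quoted from Grove--Halperin \cite{GroveHalperin87}, whose argument does proceed, as you propose, by analyzing Sullivan models of the double mapping cylinder. Your $h=0$ outline is a plausible sketch of that computation (though the phrase ``$\chi_\pm$ is exact in $M$'' needs replacing by an actual computation in the relative model --- there is no map $M\to G/K^\pm$, so the statement as written does not parse), and your observation via Lemma \ref{lem:cohomtop} that a non-orientable singular orbit forces the opposite codimension to equal $2$ is correct.

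The genuine gap is in the non-orientable cases. Your reduction rests on the claim that the non-orientable orbit $G/K^-$ has an orientation double cover $G/\widetilde K^-$ with $H\subseteq\widetilde K^-\subseteq K^-$ of index two and $\widetilde K^-/H=\s^{k_--1}$. This is impossible: if $H\subseteq\widetilde K^-$ and $[\,K^-:\widetilde K^-]=2$, then the connected sphere $K^-/H$ would surject continuously onto the discrete two-point space $K^-/\widetilde K^-$. In the twisted case one necessarily has $H\widetilde K^-=K^-$, so passing to the orientation cover replaces the principal isotropy by the index-two subgroup $\widetilde H=H\cap\widetilde K^-$ and the principal orbit by its double cover $G/\widetilde H$; this is exactly why only a finite cover $\overline{\mathcal F}$ of $\mathcal F$ can be computed directly and why the covering data (two-to-one for $h=1$, universal cover with $\pi_1(\mathcal F)\cong Q_8$ for $h=2$) require a genuine analysis that your sketch, built on the incorrect containment, does not supply. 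Relatedly, your justification that $k_-$ must be even when $h=1$ (``otherwise $\chi_-^2=0$ by graded commutativity'') is not a proof: for an even-dimensional fiber sphere the rational Euler class vanishes anyway, and $\chi_-^2=0$ would merely alter the computation rather than produce a contradiction, so the parity assertion needs its own argument. Finally, $\pi_1(\mathcal F)\cong Q_8$ is only asserted; the exact sequence $\pi_2(G/H)\to\pi_2(M)\to\pi_1(\mathcal F)\to\pi_1(G/H)\to 1$ cannot by itself distinguish $Q_8$ from, say, $\Z_2\times\Z_4$ or $\Z_8$. So while the $h=0$ case can be completed along your lines, the $h\geq 1$ part of the proposal does not constitute a proof of the stated theorem.
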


\begin{remark}
Grove and Halperin \cite[Proposition 3.5]{GroveHalperin87} use the terminology \textit{twisted} to describe the case where the bundle $\s^{\ell_\pm}\rightarrow G/H\rightarrow G/K^\pm$ is non-orientable.  In our case, $G/H$, being a codimension $1$ submanifold of the simply connected manifold $M$, is orientable.  It follows that the bundle is non-orientable iff $G/K^\pm$ is non-orientable.
\end{remark}

\subsection{Computation of the connecting homomorphism}
In this subsection, we record some general computations about the connecting homomorphism in the long exact sequence in rational homotopy associated to the fibration $G/H \to M$ with homotopy fiber $\mathcal F$. As in the previous section, $M$ denotes a cohomogeneity one manifold and we keep the notation from the previous section. The main results are Lemma \ref{lem:dimfailure} and Proposition \ref{connecthom}. The combination of the Grove-Halperin model and these results imply strong restrictions on the rational homotopy of an even-dimensional cohomogeneity one manifold with singly generated or four-periodic rational cohomology. Our main tool is a lemma concerning the quantity $d(X)$ defined as follows:

\begin{definition}\label{def:d}
For a connected, nilpotent space $X$ with $\dim \pi_*^\Q(X) < \infty$, set
	\[d(X) = \sum \deg a_i - \sum (\deg b_j - 1) = \sum_{\mathrm{odd}~k\geq 1} k \of{\dim \pi_{k}^\Q(X) - \dim \pi_{k+1}^\Q(X)},\]
where the $a_i$ and $b_i$ are any choice of graded basis of the odd and even degree rational homotopy groups of $X$.
\end{definition}

Note that $d(X)$ measures the (cohomological) dimension of $X$ when $X$ is rationally elliptic (see \cite[Theorem 32.15]{FelixHalperinThomas01}). In particular, since cohomogeneity one manifolds are rationally elliptic (see \cite{GroveHalperin87}), it follows that $d(M) = d(G/H) + 1$ for a cohomogeneity one $G$-manifold $M$ with principal isotropy $H$.

Note also that $d(\Omega \s^n) = 2-n$, independent of whether $n$ is even or odd. In particular, $d(\mathcal F) = d(\overline{\mathcal F}) = 1$ for each of the  homotopy fibers $\mathcal F$ in Theorem \ref{GHtype}.

In addition to these remarks, we need the following basic lemma.

\begin{lemma}\label{lem:dimfailure}
Suppose $F\xrightarrow{i} E\xrightarrow{p} B$ is a fibration for which $d$ is defined on all three spaces.  Let $\partial_k:\pi_{k}^\Q(B)\rightarrow \pi_{k-1}^\Q(F)$ denote the connecting homomorphism in the long exact sequence of rational homotopy groups associated to the fibration.  Then 
	\[d(E)  = d(F) + d(B) - 2 \sum_{\mathrm{odd}~k \geq 1} \dim \operatorname{im} \partial_k.\] 
\end{lemma}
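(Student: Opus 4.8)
The plan is to track the three quantities $d(F)$, $d(E)$, $d(B)$ through the long exact sequence of rational homotopy groups
\[
\cdots \to \pi_k^\Q(F) \xrightarrow{i_*} \pi_k^\Q(E) \xrightarrow{p_*} \pi_k^\Q(B) \xrightarrow{\partial_k} \pi_{k-1}^\Q(F) \to \cdots,
\]
and to use the fact that $d(X)$ is the alternating-type sum $\sum_{\text{odd }k} k\bigl(\dim\pi_k^\Q(X) - \dim\pi_{k+1}^\Q(X)\bigr)$. Since all three spaces have finite-dimensional rational homotopy, the sequence breaks into short exact pieces and every term is finite-dimensional, so I may freely manipulate dimensions. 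First I would set $r_k = \dim\operatorname{im}\partial_k$ and record the three exactness relations at $\pi_k$: from exactness at $\pi_k^\Q(B)$, $\dim\pi_k^\Q(B) = \dim(\operatorname{im} p_*\text{ into }\pi_k^\Q(B)) + r_k$; from exactness at $\pi_k^\Q(E)$, $\dim\pi_k^\Q(E) = \dim(\operatorname{im} i_*\text{ into }\pi_k^\Q(E)) + \dim(\operatorname{im} p_*)$; and from exactness at $\pi_k^\Q(F)$, $\dim\pi_k^\Q(F) = r_{k+1} + \dim(\operatorname{im} i_*)$. Eliminating the images of $i_*$ and $p_*$ gives the pointwise identity
\[
\dim\pi_k^\Q(E) = \dim\pi_k^\Q(F) + \dim\pi_k^\Q(B) - r_k - r_{k+1}.
\]

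Next I would plug this into the definition of $d$. Writing $f_k=\dim\pi_k^\Q(F)$ etc., we have
\[
d(E) = \sum_{\text{odd }k\geq 1} k\bigl(e_k - e_{k+1}\bigr)
     = \sum_{\text{odd }k\geq 1} k\Bigl(f_k+b_k - f_{k+1}-b_{k+1} - r_k - r_{k+1} + r_{k+1} + r_{k+2}\Bigr),
\]
using the pointwise identity for $e_k$ (with the shift $k\mapsto k+1$ for $e_{k+1}$, where the $r_{k+1}$ terms cancel and $r_{k+2}$ appears). This already separates into $d(F) + d(B)$ plus a correction term $\sum_{\text{odd }k\geq 1} k(-r_k + r_{k+2})$. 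It remains to evaluate this last sum. The key observation is that $\partial_k$ raises parity: it maps $\pi_k^\Q(B)$ to $\pi_{k-1}^\Q(F)$, so $\operatorname{im}\partial_k$ is a subspace of an even-degree homotopy group when $k$ is odd. For the correction sum I would reindex: $\sum_{\text{odd }k\geq 1} k\,r_{k+2} = \sum_{\text{odd }j\geq 3}(j-2)\,r_j = \sum_{\text{odd }j\geq 1}(j-2)r_j$ since $r_1=0$ (there is no $\pi_0$ target in positive degree, or more precisely the $k=1$ term contributes $(1-2)r_1$ which is $-r_1$, and this is zero as $r_1 = \dim \operatorname{im}\partial_1 = 0$). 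Hence
\[
\sum_{\text{odd }k\geq 1} k(-r_k + r_{k+2}) = \sum_{\text{odd }k\geq 1}\bigl((k-2) - k\bigr)r_k = -2\sum_{\text{odd }k\geq 1} r_k,
\]
which is exactly $-2\sum_{\text{odd }k\geq 1}\dim\operatorname{im}\partial_k$, completing the proof.

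I expect the main obstacle to be purely bookkeeping: making sure the index shifts in the telescoping sums are handled correctly, in particular the boundary behavior at $k=1$ (where $r_1 = 0$) and at large $k$ (where all terms vanish by finite-dimensionality, so there are no convergence issues). One should also double-check that the pointwise identity $e_k = f_k + b_k - r_k - r_{k+1}$ is applied with the correct index in both the $e_k$ and $e_{k+1}$ slots of the definition of $d(E)$; a sign or index error there would propagate. A minor point worth stating explicitly is why $d$ being defined on all three spaces guarantees every $\operatorname{im}\partial_k$ is finite-dimensional — this is immediate since $\operatorname{im}\partial_k \subseteq \pi_{k-1}^\Q(F)$ and the latter is finite-dimensional by hypothesis — so the sum $\sum_k \dim\operatorname{im}\partial_k$ is a finite sum of finite numbers. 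No deep input is needed beyond the exactness of the long exact sequence and the definition of $d$.
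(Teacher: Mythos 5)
Your proposal is correct and takes essentially the same route as the paper: the identity $\dim\pi_k^\Q(E) = \dim\pi_k^\Q(F) + \dim\pi_k^\Q(B) - \dim\operatorname{im}\partial_k - \dim\operatorname{im}\partial_{k+1}$ extracted from the long exact sequence, then summed over odd $k$ with the same reindexing/telescoping. Your explicit remark that $\dim\operatorname{im}\partial_1 = 0$ (since $F$ is connected) is a boundary point the paper leaves implicit, but otherwise the arguments coincide.
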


\begin{proof}
For fixed $k$, the long exact sequence in rational homotopy groups associated to the fibration $F\rightarrow E\rightarrow B$ gives isomorphisms $\pi_k^\Q(F) \cong \operatorname{im}\partial_{k+1} \oplus \ \ker \pi_k(p)$, $\pi_k^\Q(E) \cong \ker \pi_k(p) \oplus \ker \partial_k,$ and $ \pi_k^\Q(B) \cong \ker \partial_k \oplus \operatorname{im}\partial_k.$  Thus, \begin{equation}\label{dcalc}\dim \pi_k^\Q(E) = \dim \pi_k^\Q(F) + \dim \pi_k^\Q(B) - \dim \operatorname{im}(\partial_{k+1}) - \dim \operatorname{im}(\partial_k).\end{equation} 
For odd $k\geq 1$, the contribution of Equation $\eqref{dcalc}$ in degree $k$ and $k+1$ to $d(E)$ is \begin{align*} k(\dim\pi_k^\Q(E) -\dim\pi_{k+1}^\Q(E)) =& k(\dim\pi_k^\Q(F) -\dim\pi_{k+1}^\Q(F))\\  &+k(\dim\pi_k^\Q(B) -\dim\pi_{k+1}^\Q(B))\\ &-k(\dim \operatorname{im}(\partial_k) - \dim \operatorname{im}(\partial_{k+2})).  \end{align*}  The result follows by summing both sides over odd $k \geq 1$.
\end{proof}

Combining this lemma with the above remarks, we have the following corollaries, the first of which is implied by \cite[Theorem 1.4.iii]{Halperin78}, and the second of which is closely related to \cite[Lemma 6.3]{GroveHalperin87}.

\begin{corollary}\label{fbundle}
For a fiber bundle $F\rightarrow E\rightarrow B$ of rationally elliptic spaces, the map $\partial_\ast:\pi_{odd}^\Q(B)\rightarrow \pi_{even}^\Q(F)$ is zero and $d(E) = d(B) + d(F)$. In particular, the map $\pi_{odd}^\Q(E)\rightarrow \pi_{odd}^\Q(B)$ is surjective and the map $\pi_{even}^\Q(F)\rightarrow \pi_{even}^\Q(E)$ is injective.
\end{corollary}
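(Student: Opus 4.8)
The plan is to reduce everything to a single statement---that the odd connecting homomorphisms $\partial_k\colon\pi_k^\Q(B)\to\pi_{k-1}^\Q(F)$ all vanish, i.e.\ $\partial_*\colon\pi_{odd}^\Q(B)\to\pi_{even}^\Q(F)$ is zero---and then to deduce the displayed identity and the ``in particular'' clause formally. This vanishing is \cite[Theorem~1.4.iii]{Halperin78}, but I would prove it directly with the Serre spectral sequence of the fiber bundle.

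For that argument: since $F$, $E$, and $B$ are all rationally elliptic, $d$ of each equals the top nonvanishing degree of its rational cohomology (\cite[Theorem~32.15]{FelixHalperinThomas01}), and rational Poincar\'e duality makes $H^{d(F)}(F;\Q)$ one-dimensional. The fibration is of nilpotent spaces---and in every application in this paper the base is simply connected---so the monodromy acts trivially on this line, whence $E_2^{d(B),d(F)}\cong H^{d(B)}(B;\Q)\otimes H^{d(F)}(F;\Q)\cong\Q$. This class lies in the outer corner of the nonvanishing part of the $E_2$-page, so for bidegree reasons no differential enters or leaves it; hence $E_\infty^{d(B),d(F)}\cong\Q$, so $H^{d(F)+d(B)}(E;\Q)\ne 0$ and $d(E)\ge d(F)+d(B)$. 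On the other hand Lemma~\ref{lem:dimfailure} gives $d(E)=d(F)+d(B)-2\sum_{\mathrm{odd}~k\ge1}\dim\operatorname{im}\partial_k\le d(F)+d(B)$. Comparing the two, $\sum_{\mathrm{odd}~k\ge1}\dim\operatorname{im}\partial_k=0$, so every odd $\partial_k=0$, and simultaneously $d(E)=d(F)+d(B)$.

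Finally I would read off the surjectivity and injectivity from exactness of the long exact sequence in rational homotopy: for odd $k$, exactness at $\pi_k^\Q(B)$ and $\partial_k=0$ give $\operatorname{im}\bigl(\pi_k^\Q(E)\to\pi_k^\Q(B)\bigr)=\ker\partial_k=\pi_k^\Q(B)$, so $\pi_{odd}^\Q(E)\to\pi_{odd}^\Q(B)$ is onto; for even $k$, exactness at $\pi_k^\Q(F)$ and $\partial_{k+1}=0$ give $\ker\bigl(\pi_k^\Q(F)\to\pi_k^\Q(E)\bigr)=\operatorname{im}\partial_{k+1}=0$, so $\pi_{even}^\Q(F)\to\pi_{even}^\Q(E)$ is injective. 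The only substantive step is the vanishing of the odd connecting maps; within the spectral sequence argument the one thing to watch is the coefficient system on $H^*(F;\Q)$, where one needs the monodromy on the top class to be trivial---guaranteed by nilpotence of the total space, and immediate in all the cases applied here because the base is simply connected.
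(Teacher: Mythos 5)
Your argument is correct, and its skeleton is the one the paper intends, but where the paper simply cites \cite[Theorem 1.4.iii]{Halperin78} (combined with Lemma \ref{lem:dimfailure} and the remark that $d$ equals the formal dimension of a rationally elliptic space) you prove the needed input directly: the Serre spectral sequence corner argument gives $H^{d(B)+d(F)}(E;\Q)\neq 0$, hence $d(E)\geq d(B)+d(F)$, and playing this against $d(E)=d(B)+d(F)-2\sum\dim\operatorname{im}\partial_k\leq d(B)+d(F)$ from Lemma \ref{lem:dimfailure} yields the equality and the vanishing of every odd connecting homomorphism simultaneously; the ``in particular'' clause then follows from exactness, as you say. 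What your route buys is self-containedness (no appeal to Halperin's theorem); what the citation buys is brevity and not having to discuss the coefficient system in the spectral sequence. On that last point your justification is slightly off: nilpotence of the total space does not by itself force trivial monodromy on $H^{d(F)}(F;\Q)$. The correct remark is that the fibration itself must be nilpotent for the rational homotopy long exact sequence and Lemma \ref{lem:dimfailure} to apply in the first place (as the paper implicitly assumes), and a unipotent action on the top class---a one-dimensional space by Poincar\'e duality for elliptic spaces---is automatically trivial; alternatively, as you observe, every application of Corollary \ref{fbundle} in the paper has simply connected base. This is a wording fix, not a gap.
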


\begin{corollary}\label{cor:piodd}
For the fibration $\mathcal F\rightarrow G/H\rightarrow M$ coming from a cohomogeneity one $G$-manifold $M$, the map 
	$\partial_\ast:\pi_{odd}^\Q(M)\rightarrow \pi_{even}^\Q(\mathcal{F})$ has rank one.
\end{corollary}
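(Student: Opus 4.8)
I want to apply Lemma~\ref{lem:dimfailure} to the fibration $\mathcal F \to G/H \to M$, using that all three spaces are rationally elliptic (the Grove--Halperin theorem gives this for $\mathcal F$, and $G/H$ and $M$ are homogeneous, resp.\ cohomogeneity one, hence rationally elliptic). The lemma reads
\[
d(G/H) = d(\mathcal F) + d(M) - 2\sum_{\mathrm{odd}~k\geq 1}\dim\operatorname{im}\partial_k,
\]
where $\partial_k\colon \pi_k^\Q(M)\to \pi_{k-1}^\Q(\mathcal F)$. Now I would feed in the two facts recorded just before the statement: $d(\mathcal F) = 1$ for each of the Grove--Halperin fibers (since $d(\Omega\s^n) = 2-n$), and $d(M) = d(G/H) + 1$ because $M$ is cohomogeneity one over $G$ with principal isotropy $H$. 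Substituting, the $d(G/H)$ terms cancel and one is left with
\[
0 = 1 + 1 - 2\sum_{\mathrm{odd}~k\geq 1}\dim\operatorname{im}\partial_k,
\]
so that $\sum_{\mathrm{odd}~k\geq 1}\dim\operatorname{im}\partial_k = 1$. The sum over odd $k$ of $\dim\operatorname{im}\partial_k$ counts exactly the total rank of $\partial_\ast\colon \pi_{odd}^\Q(M)\to\pi_{even}^\Q(\mathcal F)$ (since the source $\pi_k^\Q(M)$ is nonzero for odd $k$ and the target $\pi_{k-1}^\Q(\mathcal F)$ sits in even degree), so this rank is one, which is the claim.

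**Details to check.** The one genuinely substantive point is that the hypotheses of Lemma~\ref{lem:dimfailure} are met, namely that $d$ is defined on all three spaces — i.e., each has finite-dimensional rational homotopy. For $\mathcal F$ this is Theorem~\ref{GHtype} together with the observation that a finite cover does not change finiteness of $\pi_\ast^\Q$; for $G/H$ it is standard (homogeneous spaces are rationally elliptic); for $M$ it is the Grove--Halperin ellipticity theorem cited in the excerpt. I should also make sure the fibration has simply connected (or at least nilpotent) spaces so the long exact sequence in $\pi_\ast^\Q$ behaves; here $\mathcal F$ may have $\pi_1 = Q_8$ in the case $h=2$, but $Q_8$ is finite so $\pi_1^\Q(\mathcal F) = 0$ and the rational long exact sequence is unaffected, and the remark that $d(\mathcal F) = d(\overline{\mathcal F}) = 1$ already accounts for all three cases $h\in\{0,1,2\}$ uniformly. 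Finally I want to note that $\partial_k = 0$ for even $k$ by degree reasons is not needed, but it is worth observing that the summation index "odd $k$" in the lemma is exactly what produces $\pi_{odd}^\Q(M)$ as the source.

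**Expected obstacle.** There is essentially no obstacle: once Lemma~\ref{lem:dimfailure} and the two numerical inputs $d(\mathcal F)=1$, $d(M)=d(G/H)+1$ are in hand, the corollary is a two-line arithmetic consequence. The only place one must be slightly careful is in verifying that the quantity $\sum_{\mathrm{odd}~k}\dim\operatorname{im}\partial_k$ really equals the rank of $\partial_\ast\colon\pi_{odd}^\Q(M)\to\pi_{even}^\Q(\mathcal F)$ rather than something involving even $k$ as well; but since $\partial_k$ lowers degree by one, a nonzero image in even degree can only come from odd $k$, so the identification is immediate. Thus the proof is: apply the lemma, substitute the two identities, solve for the sum, and reinterpret it as the rank of $\partial_\ast$ on odd homotopy.
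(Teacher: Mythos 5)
Your proposal is correct and is exactly the argument the paper intends: it states Corollary \ref{cor:piodd} as an immediate consequence of Lemma \ref{lem:dimfailure} combined with the preceding remarks that $d(\mathcal F)=1$ (from Theorem \ref{GHtype}) and $d(M)=d(G/H)+1$, which is precisely your substitution yielding $\sum_{\mathrm{odd}\,k}\dim\operatorname{im}\partial_k=1$. Your identification of that sum with the rank of $\partial_\ast\colon\pi_{odd}^\Q(M)\to\pi_{even}^\Q(\mathcal F)$ is also the right reading, so nothing is missing.
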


For our purposes, we need a stronger version of Corollary \ref{cor:piodd}:

\begin{proposition}\label{connecthom} 
For a cohomogeneity one manifold as in Theorem \ref{GHtype}, the connecting map $\partial:\pi_{2m+1}^\Q(M)\rightarrow \pi_{2m}^\Q(\mathcal{F})$ is non-trivial precisely when $\pi_{2m}^\Q(-)$ of the loop space factor of $\overline{\mathcal F}$ is non-trivial.
\end{proposition}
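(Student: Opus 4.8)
The plan is to read the proposition off the long exact sequence in rational homotopy of the fibration $\mathcal F\xrightarrow{i}G/H\xrightarrow{p}M$, together with Theorem \ref{GHtype} and Corollary \ref{cor:piodd}. First I would note that, by exactness of this sequence, $\im\bigl(\partial\colon\pi_{odd}^\Q(M)\to\pi_{even}^\Q(\mathcal F)\bigr)=\ker\bigl(i_\ast\colon\pi_{even}^\Q(\mathcal F)\to\pi_{even}^\Q(G/H)\bigr)$, which is one-dimensional by Corollary \ref{cor:piodd}; so the statement amounts to locating the degree in which this one-dimensional kernel lies. Next I would unpack $\pi_{even}^\Q(\mathcal F)=\pi_{even}^\Q(\overline{\mathcal F})$ using Theorem \ref{GHtype}: writing $\overline{\mathcal F}\simeq_\Q\s^{\ell_+}\times\s^{\ell_-}\times\Omega\s^{\ell_++\ell_-+1}$, the loop factor $\Omega\s^N$ with $N=\ell_++\ell_-+1$ contributes exactly a one-dimensional subspace to $\pi_{even}^\Q(\mathcal F)$, sitting in degree $N-1$ if $N$ is odd and in degree $2N-2$ if $N$ is even, hence in a degree exceeding $\ell_++\ell_->\max(\ell_+,\ell_-)$; every other class in $\pi_{even}^\Q(\mathcal F)$ comes from a sphere factor $\s^{\ell_\pm}$ and so occurs only when $\ell_\pm$ is even, in degree $\ell_\pm$. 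Hence the proof reduces to showing that $i_\ast$ is injective on the sum of the sphere-factor contributions: the one-dimensional $\ker i_\ast$ will then be forced onto the loop-factor line, which is exactly where $\pi_{2m}^\Q(\Omega\s^N)$ is nonzero.

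There is nothing to do when both $\ell_\pm$ are odd, which covers the cases $h\in\{1,2\}$ and the case $h=0$ with both $k_\pm$ even; also, both $\ell_\pm$ cannot be even, since then $\rank K^\pm=\rank H<\rank G$ for both signs, forcing $\chi(M)=0$. So I would assume $h=0$ with, say, $\ell_-=k_--1$ even. Using the double disc bundle decomposition, I would observe that the principal-orbit inclusion $G/H\hookrightarrow M$ is homotopic to $G/H\xrightarrow{p_1}G/K^-\hookrightarrow M$, where $p_1$ is the sphere bundle projection with fiber $K^-/H=\s^{\ell_-}$, because $G/H=\partial D(G/K^-)$ and $D(G/K^-)$ deformation retracts onto $G/K^-$. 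Applying the natural fibration sequence of homotopy fibers to this composite then gives a fibration $\s^{\ell_-}\xrightarrow{j}\mathcal F\to\operatorname{hofib}(G/K^-\hookrightarrow M)$ in which $i\circ j\colon\s^{\ell_-}\to G/H$ is homotopic to the fiber inclusion $\iota\colon K^-/H\hookrightarrow G/H$ of $p_1$; hence $i_\ast\circ j_\ast=\iota_\ast$ on rational homotopy.

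It then remains to show that $\iota_\ast\colon\pi_{\ell_-}^\Q(\s^{\ell_-})\to\pi_{\ell_-}^\Q(G/H)$ is injective; granting this, $i_\ast$ is injective on the nonzero line $j_\ast\bigl(\pi_{\ell_-}^\Q(\s^{\ell_-})\bigr)$, which lies in degree $\ell_-$ and is therefore the entire $\s^{\ell_-}$-factor contribution to $\pi_{even}^\Q(\mathcal F)$ — and there is no $\s^{\ell_+}$ contribution, as $\ell_+$ is odd. For the injectivity of $\iota_\ast$ I would examine the rational homotopy exact sequence of the fiber bundle $\s^{\ell_-}\to G/H\to G/K^-$: there $\ker\iota_\ast$ is the image of the connecting map $\pi_{\ell_-+1}^\Q(G/K^-)\to\pi_{\ell_-}^\Q(\s^{\ell_-})$, which — since $\ell_-$ is even and $\ell_-+1$ is odd — is a component of $\partial_\ast\colon\pi_{odd}^\Q(G/K^-)\to\pi_{even}^\Q(\s^{\ell_-})$, and the latter vanishes by Corollary \ref{fbundle}.

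I expect the main obstacle to be the homotopy-theoretic identification in the second paragraph: that the Grove--Halperin sphere factor of $\mathcal F$ really is, compatibly with the map $i$ to $G/H$, the fiber $K^-/H$ of $G/H\to G/K^-$. This requires using the map $i$ itself rather than only the rational homotopy type of $\mathcal F$, and some care with basepoints and with the nilpotence of $G/H$ and $G/K^-$ so that Corollary \ref{fbundle} applies — which I would handle as in the proof of Corollary \ref{cor:piodd}. The remainder is bookkeeping with long exact sequences.
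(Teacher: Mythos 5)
Your proposal is correct, but it follows a genuinely different route from the paper's. The paper proves Proposition \ref{connecthom} purely algebraically: it forms the relative Sullivan algebra of $\mathcal F \to G/H \to M$, observes that the map $d_0$ dual to $\partial$ is the linear part of $d_E$ on fiber generators, and uses $d_E^2=0$ to show $d_0(y_k)=0$ whenever $y_k^2=d_F(y_{2k-1})$, i.e.\ for the generator of any even-dimensional sphere factor of $\overline{\mathcal F}$; Corollary \ref{cor:piodd} then forces the rank-one image of $\partial$ into the loop-space degree. That computation is uniform in $h$ and in the parities of $k_\pm$ and needs no geometric input beyond Theorem \ref{GHtype}. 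You instead argue geometrically: after reducing to the only nontrivial case ($h=0$ with exactly one of $k_\pm$ odd, say $k_-$), you identify the even sphere factor of $\mathcal F$, compatibly with the map to $G/H$, with the fiber $K^-/H$ of the disc-bundle projection $G/H\to G/K^-$ via the homotopy-fiber sequence of the composite $G/H\to G/K^-\hookrightarrow M$, and then rule out a kernel in degree $k_--1$ using the injectivity statement of Corollary \ref{fbundle} for the bundle $\s^{k_--1}\to G/H\to G/K^-$; since $\pi_{k_--1}^\Q(\mathcal F)$ is one-dimensional and the loop factor's even class sits in the strictly higher degree $2k_++2k_--4$, Corollary \ref{cor:piodd} yields the conclusion. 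What your route buys is a concrete geometric explanation of why the sphere-factor classes inject into $\pi_*^\Q(G/H)$ (they are honest fiber spheres of the disc bundle); what it costs is the case analysis plus the hypotheses needed to invoke Corollary \ref{fbundle}: when $k_+=2$ the group $K^-$ need not be connected and $G/K^-$ need not be nilpotent, so ``handling nilpotence as in Corollary \ref{cor:piodd}'' does not address the real issue. The fix is routine but should be said: pull the bundle back along the finite cover $G/K^-_0\to G/K^-$, which gives $\s^{k_--1}\to G/(H\cap K^-_0)\to G/K^-_0$ (here $K^-_0$ still acts transitively on the connected sphere $K^-/H$), note that $G/K^-_0$ is nilpotent and rationally elliptic and that finite covers do not change rational homotopy in degrees at least two, and apply Corollary \ref{fbundle} there. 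With that small repair your argument is complete.
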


\begin{proof}
We start with some general remarks. Set $E = G/H$. Let $(\Lambda V_M, d_M))$ and $(\Lambda V_{\mathcal F}, d_{\mathcal F})$ be minimal Sullivan models for $M$ and $\mathcal F$, respectively. Since $\mathcal F \to E \to M$ is a fibration with $\pi_1(M) = 0$, we can form a relative Sullivan algebra $(\Lambda(V_M \oplus V_F), d_E)$ for $E$.  The differential $d_E$ has the property that $d_E(x) = d_M(x)$ for $x \in V_M$ and $d_E(y) - d_F(y) \in \Lambda^{\geq 1}V_M \otimes \Lambda V_F$. Moreover, the composition
	\[V_F^k \subseteq \Lambda V_M \tensor \Lambda V_F \stackrel{d_E}{\longrightarrow} \Lambda V_M \tensor \Lambda V_F \to V_M^{k+1},\]
denoted $d_0$, is dual to the connecting homomorphism $\partial: \pi_{k+1}^\Q(M) \to \pi_k^\Q(\mathcal{F})$ (see \cite[Proposition 15.13]{FelixHalperinThomas01}). 

We proceed to the proof. It suffices by Corollary \ref{cor:piodd} to prove that $\partial = 0$ in degree $k+1$ when $k$ is even and $\pi_k^\Q(-)$ of the loop space factor of $\overline{\mathcal F}$ is zero. By the remarks above and the Grove-Halperin structure theorem for the rational homotopy type of $\overline{\mathcal F}$, it suffices to prove that $d_0 = 0$ on even degrees $k$ for which there is a spherical factor of $\overline{\mathcal F}$. To do this, suppose $y_k \in V_F^k$ is non-zero such that $y_k^2 = d_F(y_{2k-1})$ for some $y_{2k-1} \in V_F^{2k-1}$. First, note that

\[d_E(y_{2k-1}) = d_F(y_{2k-1}) + d_0(y_{2k-1}) +g_1 = y_k^2 + d_0(y_{2k-1}) + g_1\]
for some $g_1 \in \Lambda^{\geq 2}(V_M \oplus V_F)$. Applying $d_E$ again and using the properties of $d_E$ when restricted to $V_M$ or $V_F$, we have
\[0 = d_E^2(y_{2k-1}) = 2 y_k \of{0 + d_0(y_k) + g_2} + d_M(d_0(y_{2k-1})) + d_E(g_1)\]
for some $g_2 \in \Lambda^{\geq 2}(V_M \oplus V_F)$. Extracting the part of this expression in $V_M \tensor V_F \subseteq \Lambda V_M \tensor \Lambda V_F$, we obtain $0 = 2 y_k d_0(y_k)$ and hence $d_0(y_k) = 0$, as claimed.
\end{proof}

\subsection{Applications to $\QP^n_k$ and rationally four-periodic manifolds}\label{sec:applications}
Here we apply Proposition \ref{connecthom} to analyze cases in Theorems \ref{thm:QP} and \ref{thm:4periodic} involving small codimensions $k_\pm$. We start by recording the minimal Sullivan model associated to a $\QP^n_k$.

\begin{proposition}\label{pro:modelQP}
A simply connected manifold $M$ is a $\QP^n_k$ if and only if one of the following hold:
	\begin{enumerate}
	\item $k$ is odd, $n = 1$, $\pi_k^\Q(M) \cong \Q$, and all other rational homotopy groups vanish.
	\item $k$ is even, $n \geq 1$, $\pi_k^\Q(M)\cong \pi_{k(n+1)-1}^\Q(M) \cong \Q$, and all other rational homotopy groups vanish.
	\end{enumerate}
Note that $\chi(M) = 0$ in the first case and that $\chi(M) > 0$ in the second. In both cases, $M$ is rationally elliptic and there is exactly one rational homotopy type for fixed $(k,n)$.
\end{proposition}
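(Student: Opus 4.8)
The plan is to recognize $\QP^n_k$ as a manifold with very restricted rational cohomology—namely $\Q[x]/(x^{n+1})$, $|x|=k$—and to compute its minimal Sullivan model directly from this, splitting into the cases $k$ odd and $k$ even. In the $k$ odd case, graded commutativity forces $x^2 = 0$, so $n = 1$ and $H^*(M;\Q) \cong H^*(\s^k;\Q)$; the minimal model of an odd sphere is the exterior algebra $(\Lambda(y_k), 0)$ with a single odd generator, whence $\pi_k^\Q(M) = \Q$ and all other rational homotopy groups vanish. This gives statement (1), and $\chi(M) = 0$ is immediate since the only nonzero reduced cohomology is in odd degree $k$.

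For $k$ even, the strategy is to write down the minimal model $(\Lambda(y_k, z_{k(n+1)-1}), d)$ with $|y_k| = k$, $|z_{k(n+1)-1}| = k(n+1) - 1$, $dy_k = 0$, and $dz_{k(n+1)-1} = y_k^{n+1}$, and to check that its cohomology is exactly $\Q[x]/(x^{n+1})$ with $x$ represented by $y_k$. Indeed, in degrees $\le kn$ the cohomology is spanned by $1, y_k, \dots, y_k^n$, and the class $z$ kills $y_k^{n+1}$ and everything above: a direct check (or appeal to the fact that this is precisely the model of $\QP^n_k$-type spaces, e.g.\ $\CP^n$, $\HP^n$, the Cayley plane) shows $H^*(\Lambda(y,z),d) \cong \Q[x]/(x^{n+1})$. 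Since by the formality-free argument the minimal model is determined by the cohomology algebra only through its construction, one must argue that \emph{any} simply connected $M$ with this cohomology ring has this minimal model; this follows because the model is built degree by degree, and through degree $k(n+1)-2$ the cohomology is just that of $\CP^n$-type (forcing the single generator $y_k$ and no others), while the first obstruction $y_k^{n+1} \ne 0$ in $H^{k(n+1)}$ forces introduction of exactly one generator $z$ in degree $k(n+1)-1$, after which the cohomology is already correct in all degrees and no further generators are needed. Hence $\pi_k^\Q(M) \cong \pi_{k(n+1)-1}^\Q(M) \cong \Q$ with all else zero, giving statement (2); $\chi(M) = n+1 > 0$ as already noted in the introduction, and rational ellipticity is clear since both $H^*(M;\Q)$ and $\pi_*^\Q(M)$ are finite-dimensional. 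Uniqueness of the rational homotopy type for fixed $(k,n)$ follows because the minimal model is pinned down as above.

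The converse directions are routine: given the stated homotopy groups, one constructs the minimal model and reads off the cohomology, recovering $\Q[x]/(x^{n+1})$ (or $H^*(\s^k;\Q)$ when $k$ is odd). The main obstacle, such as it is, is the middle step in the $k$ even case: verifying that no rational homotopy can hide in intermediate degrees, i.e.\ that the cohomological condition $H^*(M;\Q) \cong \Q[x]/(x^{n+1})$ really does force precisely the two-generator model with no extra generators between degrees $k$ and $k(n+1)-1$. This is handled by the standard inductive construction of the minimal model together with the observation that the cohomology of the partial model $(\Lambda(y_k), 0)$ already agrees with $H^*(M;\Q)$ up to degree $k(n+1)-1$, so the only generator that can be forced in is $z$ in degree $k(n+1)-1$ to cancel $y_k^{n+1}$, and after that the model is exact in the relevant range—no further generators arise.
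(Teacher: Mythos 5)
Your forward direction is precisely the computation the paper has in mind (it records this proposition without proof, as a routine Sullivan-model calculation), and it is correct: for $k$ even the partial model $(\Lambda(y_k),0)$ maps to the de Rham--Sullivan algebra of $M$ inducing an isomorphism on cohomology through degree $k(n+1)-1$; adjoining a single generator $z$ with $dz=y_k^{n+1}$ kills $y_k^{n+1}$, and the two-generator model already has cohomology $\Q[x]/(x^{n+1})$ in every degree, so it is the minimal model and the homotopy groups, $\chi$, ellipticity, and uniqueness of the rational homotopy type follow as you say; the odd case is likewise fine.

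The one step that is too quick is the converse, which you cannot settle by ``constructing the minimal model and reading off the cohomology'': the homotopy groups alone determine the generators but not the differential. Minimality and degree reasons give $dy_k=0$ and $dz=c\,y_k^{n+1}$ for some scalar $c$, and if $c=0$ the cohomology is $\Q[y_k]\otimes\Lambda(z)$, which has exactly the stated homotopy groups but is not the truncated algebra (it is nonzero in arbitrarily high degrees). To exclude this you must use that $M$ is a manifold, hence has cohomology vanishing above $\dim M$; boundedness forces $c\neq 0$, and after rescaling $z$ you recover $dz=y_k^{n+1}$ and $H^*(M;\Q)\cong\Q[x]/(x^{n+1})$. (Strictly speaking one also needs $M$ closed to call it a $\QP^n_k$ in the paper's sense---the homotopy-theoretic hypotheses cannot see this, e.g.\ $\CP^n\times\R$---but in the paper's context all manifolds are closed.) With that observation added, your argument is complete and coincides with the intended one.
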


Similarly it is straightforward to characterize rationally elliptic, simply connected, closed manifolds with four-periodic rational cohomology (see, for example, \cite[Theorem 1.1]{DeVito18}, for a proof). Following \cite{Wilking03,Kennard13}, we say that an orientable closed manifold $M$ has four-periodic rational cohomology if there exists $x \in H^4(M;\Q)$ such that the maps $H^i(M;\Q) \to H^{i+4}(M;\Q)$ induced by multiplication by $x$ are surjections for $0 \leq i < \dim M - 4$ and injections for $0 < i \leq \dim M - 4$. 

\begin{proposition}\label{pro:model4periodic}
For a simply connected, closed manifold $M$ with four-periodic rational cohomology and even dimension, the following are equivalent:
	\begin{enumerate}
	\item $M$ is rationally elliptic.
	\item $M$ is a $\QP^1_k$, $\QP^n_2$, or $\QP^n_4$ or a rational $\s^2 \times \HP^n$ or $\s^3 \times \s^3$.
	\end{enumerate}
In both cases, $M$ has positive Euler characteristic unless $M \simeq_\Q \s^3 \times \s^3$.
\end{proposition}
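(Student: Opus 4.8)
The plan is to prove (2)$\Rightarrow$(1) easily and then concentrate on (1)$\Rightarrow$(2), which is the substantive direction. For the easy implication, each model listed in (2) is a product or a total space of a fibration built from spheres and rank-one-symmetric-space-type spaces, all of which are rationally elliptic; since rational ellipticity is inherited by products and by total spaces of fibrations with elliptic fiber and base, every manifold in (2) is rationally elliptic. (Alternatively, one just writes down the finite-dimensional $\pi_*^\Q$ in each case directly, using Proposition \ref{pro:modelQP} for the $\QP$'s.)

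For (1)$\Rightarrow$(2), suppose $M$ is simply connected, closed, even-dimensional, rationally elliptic, and four-periodic. First I would record the numerical constraints of rational ellipticity: writing $q = \dim\pi_{odd}^\Q(M)$ and $p = \dim\pi_{even}^\Q(M)$, one has $p \le q$, $\sum(\deg a_i) - \sum(\deg b_j - 1) = \dim M$, and $\chi(M) \ge 0$ with equality iff $p > 0$ (equivalently, iff $q > p$). Next I would exploit four-periodicity: multiplication by a degree-$4$ class $x$ being eventually surjective/injective forces $H^*(M;\Q)$ to be concentrated in a narrow band of degrees relative to $x$-powers. The cleanest route is to split into the cases $p = 0$ and $p > 0$. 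If $p = 0$ then $\chi(M)$ may vanish; four-periodicity of a rational space with only odd homotopy (a rational product of odd spheres) forces it to be a rational sphere ($\QP^1_k$, necessarily $k \in \{2,3,4\}$ once we also remember $M$ is even-dimensional and four-periodic, so actually $\s^3\times\s^3$ or an even rational sphere) or a rational $\s^3 \times \s^3$; this is a short argument comparing the Poincaré polynomial of a product of odd spheres against the periodicity window. If $p > 0$, then $\chi(M) > 0$, so by the Hopf-type structure of rationally elliptic spaces with positive Euler characteristic, $H^*(M;\Q)$ is a complete intersection: a tensor product of truncated polynomial algebras $\Q[x_i]/(x_i^{n_i+1})$ on even generators. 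Four-periodicity then severely restricts this: all generators must have degree dividing into the period-$4$ pattern, which pins the generators down to degrees in $\{2,4\}$ and at most two of them, and a short case analysis yields exactly $\QP^n_2$, $\QP^n_4$, or rational $\s^2 \times \HP^n$.

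The main obstacle I expect is the case $p>0$ with two even generators: ruling out, say, a rational $\CP^m \times \CP^n$ with $m,n \ge 2$, or $\s^2 \times \CP^n$, etc., and showing that four-periodicity forces one of the two truncated-polynomial factors to be $\Q[x]/(x^2)$ (a rational $\s^2$) while the other has its generator in degree $2$ or $4$. This is where one must use the injectivity half of the periodicity condition carefully — surjectivity alone bounds the top degree, but injectivity bounds how much cohomology can sit in low degrees, and the interplay forces the "extra" generator beyond a single $\QP^n_2$ or $\QP^n_4$ to contribute only a $\s^2$ factor. I would organize this by letting $x \in H^4$ be the periodicity generator, comparing $\dim H^{2j}(M;\Q)$ for consecutive $j$, and observing that periodicity makes this dimension function eventually constant and symmetric (Poincaré duality) with a controlled profile; matching that profile against the Betti numbers of $\bigotimes_i \Q[x_i]/(x_i^{n_i+1})$ leaves only the claimed list. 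The final identification of the rational homotopy type — that there is a unique one realizing each cohomology algebra in the list — follows from formality of these spaces (they are all rationally equivalent to compact homogeneous spaces or products thereof), or directly from Proposition \ref{pro:modelQP} together with the Künneth formula for the $\s^2 \times \HP^n$ case.
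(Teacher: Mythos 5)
The paper itself does not prove this proposition; it treats the characterization as straightforward and cites \cite[Theorem 1.1]{DeVito18}. Judged as a stand-alone argument, your outline has the right skeleton (split on the Euler characteristic, pin down the cohomology ring using periodicity, then recover the rational homotopy type), but three of the ``general facts'' you lean on are incorrect, and they leave genuine gaps in the direction (1) $\Rightarrow$ (2). First, the dichotomy is misstated: for a rationally elliptic space one has $\chi(M)>0$ if and only if $p=q$ (equivalently $H^{\mathrm{odd}}(M;\Q)=0$), and $\chi(M)=0$ if and only if $p<q$; your assertion that $p>0$ forces $\chi(M)>0$ already fails for a rational $\CP^2\times\s^3$ ($p=1$, $q=2$, $\chi=0$). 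Consequently the mixed case $0<p<q$ --- even homotopy present but $\chi(M)=0$, so odd-degree cohomology exists --- is never treated in your case split; under four-periodicity and even dimension it must be excluded (apart from $\s^3\times\s^3$, which has $p=0$), and that takes an argument, e.g.\ using the surjectivity half of periodicity to show the least odd degree carrying cohomology would have to be $3$, and then ruling this out with injectivity, Poincar\'e duality, and ellipticity. Second, your structural claim in the $\chi>0$ case is false: positively elliptic spaces have $H^\ast(M;\Q)$ an even-degree complete intersection, but not in general a tensor product of truncated polynomial algebras. For instance $H^\ast(\SU(3)/\gT^2;\Q)\cong\Q[x,y]/(x^2+xy+y^2,\ x^2y+xy^2)$ has no nonzero degree-$2$ class with square zero, so it is not isomorphic to $\Q[u]/(u^2)\otimes\Q[v]/(v^3)$, the only tensor product of truncated polynomial algebras with its Betti numbers. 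Hence the proposed matching of Betti-number profiles against tensor products is not available (and Betti numbers alone cannot even distinguish, say, $\s^2\times\s^2$ from $\CP^2\#\CP^2$); one must instead work with the ring and the periodicity maps directly: surjectivity of $H^0\to H^4$ gives $b_4\le 1$, then one bounds $b_2\le 1$ and splits according to whether a degree-$2$ class squares to zero, which is what produces exactly the $\CP$, $\HP$, and $\s^2\times\HP^n$ types. Third, in the $p=0$ case the parenthetical ``a rational product of odd spheres'' is also false in general: $(\Lambda(x_3,y_3,z_5),\,dz=xy)$ is elliptic with purely odd homotopy and is not rationally a product of odd spheres (it is realized, e.g., by the unit sphere bundle of a rank-$6$ vector bundle over $\s^3\times\s^3$ with nonzero rational Euler class), so here too you must invoke periodicity rather than compare against Poincar\'e polynomials of products of odd spheres.

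These defects are repairable --- the intended route is essentially the one carried out in \cite{DeVito18} --- but as written the argument does not yet constitute a proof: the case $0<p<q$ is missing, and the two structure statements used to drive the $p=0$ and $\chi>0$ analyses are not theorems. A smaller point: for the final identification of the rational homotopy type you need formality of $M$ itself (which does hold for elliptic spaces with $\chi>0$, and can be checked by hand in the $\Lambda(a_3,b_3)$ case), not merely formality of the model spaces on the list; for the $\QP^n_k$ cases nothing is needed since those are defined by their cohomology rings.
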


We now apply Proposition \ref{connecthom} to prove the main results of this section: Propositions \ref{pro:bothsingnonorientable}--\ref{pro:orbittype}. The first two of these will exclude the possibility of non-orientable singular orbits in the proofs of the main theorems. The latter two provide information about the codimensions $k_\pm$ of the singular orbits in the case where they are both orientable.

\begin{proposition}\label{pro:bothsingnonorientable}
Let $M$ be an even-dimensional, simply connected, closed cohomogeneity one manifold with both singular orbits non-orientable. 
	\begin{enumerate}
	\item If $M$ is a $\QP^n_k$ for some even $k$, then $M$ is equivariantly diffeomorphic to $\s^4$ with $G = \SO(3)$ acting by its unique irreducible $5$-dimensional representation.  
	\item If $M$ is a rational $\s^2 \times \HP^n$, then $M$ is diffeomorphic to $\s^2\times \s^4$.
	\end{enumerate}
\end{proposition}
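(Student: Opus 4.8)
The plan is to use Theorem~\ref{GHtype} with $h=2$, which pins down the rational homotopy fiber $\overline{\mathcal F}$, together with Proposition~\ref{connecthom} and the Grove--Halperin constraint that $\pi_1(\mathcal F)=Q_8$, to force the dimension and cohomology type of $M$ to be extremely small, and then to invoke the known classification of low-dimensional cohomogeneity one actions. First I would record that, since both singular orbits are non-orientable, Theorem~\ref{GHtype} gives $\overline{\mathcal F}\simeq_\Q \s^3\times\s^3\times\Omega\s^7$, and hence $\pi^\Q_{even}(\mathcal F)$ is concentrated in degree $6$ (coming from the loop space factor) while $\pi^\Q_{odd}(\mathcal F)$ is two-dimensional, in degree $3$. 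By Proposition~\ref{connecthom}, the connecting homomorphism $\partial\colon \pi^\Q_{2m+1}(M)\to\pi^\Q_{2m}(\mathcal F)$ is non-trivial exactly when $\pi^\Q_{2m}$ of the loop-space factor $\Omega\s^7$ is non-trivial, i.e. only for $2m=6$; so $\pi^\Q_7(M)\to\pi^\Q_6(\mathcal F)$ is onto and every other $\partial$ vanishes. Running the long exact sequence of $\mathcal F\to G/H\to M$ then shows that in degrees other than $3,4,6,7$ the rational homotopy of $G/H$ agrees with that of $M$, and that $\pi^\Q_*(G/H)$ acquires exactly the $\s^3\times\s^3$ worth of odd classes in degree $3$ from the fiber.

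Next I would bring in the hypothesis on $H^*(M;\Q)$ via Propositions~\ref{pro:modelQP} and \ref{pro:model4periodic}, which list the possible rational homotopy types of $M$. In case (1), $M$ is a $\QP^n_k$ with $k$ even, so $\pi^\Q_*(M)$ is $\Q$ in degrees $k$ and $k(n+1)-1$ and zero elsewhere; compatibility with the surjection onto $\pi^\Q_6(\mathcal F)=\Q$ forces $k(n+1)-1=7$, whence $(n,k)=(1,4)$ or $(3,2)$, i.e. $M\simeq_\Q\s^4$ or $M\simeq_\Q\HP^3$. In case (2), $M\simeq_\Q\s^2\times\HP^n$, so $\pi^\Q_*(M)$ is $\Q$ in degrees $2$, $4$, $4n+3$ and (for $n\ge 1$) also degree $3$ contributes via the $\s^2$ Whitehead square; again the only odd degree available to surject onto degree $6$ is $4n+3=7$, forcing $n=1$ and $M\simeq_\Q\s^2\times\HP^1\simeq_\Q\s^2\times\s^4$. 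In all surviving cases $\dim M\le 6$. One then uses $d(M)=d(G/H)+1$ (the remark after Definition~\ref{def:d}) together with $d(\mathcal F)=1$ and Lemma~\ref{lem:dimfailure} to cross-check that the dimensions are consistent, and to see that $G/H$ has dimension $\dim M - 1\le 5$ with the computed rational homotopy.

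Finally I would finish by appealing to the classification of cohomogeneity one manifolds in dimensions up to six (Neumann, Parker, Hoelscher), restricted to those with a non-orientable singular orbit at each end: in dimension $4$ the only such simply connected example is $\s^4$ with the $\SO(3)$ action by its irreducible $5$-dimensional representation (the group diagram being $\Or(2)\subseteq \mathrm{O}(2)\cdot\{\pm 1\}\subseteq \SO(3)$-type data, with both $K^\pm/H=\s^1$ and both singular orbits the non-orientable $\RP^2$); in dimension $6$ the only such example is $\s^2\times\s^4$, obtained as a product or, more precisely, as the cohomogeneity one manifold whose group diagram has both singular orbits equal to $\s^2\times\RP^2$ type non-orientable orbits, and $\HP^1\simeq\s^4$ does not occur because it is already handled in dimension $4$. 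This yields exactly the two manifolds in the statement, and in case (1) the action is the stated irreducible $\SO(3)$-action on $\s^4$.

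The main obstacle I anticipate is the last step: translating the tight rational-homotopy and $\pi_1(\mathcal F)=Q_8$ restrictions into a clean identification of the actual group diagram, and in particular verifying that the only simply connected cohomogeneity one six-manifold with two non-orientable singular orbits and the rational cohomology of $\s^2\times\s^4$ is diffeomorphically $\s^2\times\s^4$ — this requires either a careful appeal to Hoelscher's classification in dimensions $5$ and $6$ or a direct analysis of the candidate diagrams $H\subseteq K^\pm\subseteq G$ allowed by \eqref{eqn:ranks}, Theorem~\ref{thm:transsphere}, and the constraint that each $K^\pm\to K^\pm/H$ be a non-orientable sphere bundle datum. The $\QP^n_k$ part is comparatively routine once $(n,k)$ is reduced to $(1,4)$, since cohomogeneity one actions on (homotopy) four-spheres are classified by Parker and the non-orientability of both orbits singles out the irreducible $\SO(3)$-action.
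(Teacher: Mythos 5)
Your overall route is the paper's: apply Theorem \ref{GHtype} with $h=2$ so that the loop-space factor is $\Omega\s^7$, use Proposition \ref{connecthom} to force $\pi_7^\Q(M)\neq 0$, reduce to a short list of rational types, and finish with low-dimensional classification results. However, there are two genuine problems. First, the surviving parameter pair $(n,k)=(3,2)$ is a rational $\CP^3$ (dimension $6$), not $\HP^3$; after this mislabelling the case silently disappears from your argument, since your concluding step only treats the rational type of $\s^2\times\s^4$ in dimension $6$. The paper must, and does, dispose of this case separately, citing Uchida's classification to see that a rational $\CP^3$ admits no cohomogeneity one action with two non-orientable singular orbits; without that (or an equivalent direct argument), part (1) of the proposition is not proved.

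Second, your claim that ``every other $\partial$ vanishes'' overstates Proposition \ref{connecthom}: that proposition only controls the connecting maps $\pi_{2m+1}^\Q(M)\to\pi_{2m}^\Q(\mathcal F)$ out of \emph{odd} degrees of $M$, and nothing in the paper's lemmas forces the even-to-odd maps to vanish (Corollary \ref{fbundle} does not apply, as $\mathcal F$ is not rationally elliptic). In fact, in the rational $\s^2\times\s^4$ case the map $\pi_4^\Q(M)\to\pi_3^\Q(\mathcal F)$ must be non-zero --- if it vanished, the relative Sullivan model would give $G/H$ unbounded rational cohomology --- so $G/H\simeq_\Q\s^2\times\s^3$ rather than carrying both degree-$3$ classes of the fiber as you assert. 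This computation is not a side remark: it is what lets one identify $G\cong\SU(2)\times\SU(2)$, $H_0\cong\sone$, $K^\pm_0=\gT^2$ and then quote Hoelscher's analysis showing that non-orientability of both $G/K^\pm$ forces a product action, hence $M\cong\s^2\times\s^4$; your plan leaves this identification as an ``anticipated obstacle,'' and the erroneous vanishing claim would actively mislead a direct attack on it. (A small further slip: for the irreducible $\SO(3)$ action on $\s^4$ the principal isotropy is $\Z_2\oplus\Z_2$ with $K^\pm\cong\Or(2)$, not the diagram you sketch, though this does not affect the logic.)
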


Note that the first part of the proposition when $k = 2$, $k = 4$, or $(k,n) = (8,2)$ follows from \cite[Section 6]{Uchida77}, \cite[Section 6]{Iwata78}, and \cite[Proposition 1]{Iwata81}, respectively.

\begin{proof}
If both singular orbits are non-orientable, then Theorem \ref{GHtype} implies that the contribution of the loop space factor of $\overline{\mathcal F}$ to the rational homotopy in even degrees occurs in degree six. By Proposition \ref{connecthom}, it follows that $\pi_7^\Q(M)\neq 0$. Restricting to the case of a $\QP^n_k$ or a rational $\s^2 \times \HP^n$, the models computed in Propositions \ref{pro:modelQP} and \ref{pro:model4periodic} imply that $M$ has the rational homotopy type of $\s^4$, $\CP^3$, or $\s^2\times \s^4$.

The case where $M\simeq_\Q \CP^3$ does not arise since a rational $\CP^3$ does not admit a cohomogeneity one action two non-orientable singular orbits (see Uchida \cite{Uchida77}).

If $M\simeq_\Q \s^4\cong \HP^1$, then the result follows from the classification of Iwata \cite{Iwata78} or from Hoelscher \cite[Section 1.7.3]{Hoelscher10}.

Finally, suppose $M\simeq_\Q \s^2\times \s^4$.  Considering the fibration $\mathcal{F}\rightarrow G/H\rightarrow M$, we see that if $\pi_4^\Q(M)\rightarrow \pi_3^Q(\mathcal{F})$ is the zero map, then $\pi_4^\Q(G/H)$ is the highest non-trivial rational homotopy group.  Computing a Sullivan model then shows that $G/H$ has unbounded rational cohomology giving a contradiction.  It follows that $\pi_\ast^\Q(G/H)\cong \pi_\ast^\Q(\s^2\times \s^3)$. From, e.g. \cite[Theorem 3.1, Case 1]{DeVito17}, since $G$ and $H$ share no common normal subgroups, it follows that $G\cong \SU(2)\times \SU(2)$ and $H_0 \cong \sone$.  Since both $k_\pm = 2$, $K^\pm_0 = \gT^2$.  This situation is studied by Hoelscher \cite[Section 3.2.1, Case A]{Hoelscher10}.  Hoelscher shows that, since $G/K^\pm$ are non-orientable, both $K^\pm$ are disconnected, the action must be a product action.  Hence, $M$ is diffeomorphic to $\s^2\times \s^4$.
\end{proof}

\begin{proposition}\label{pro:onesingnonorientable}
Let $M$ be an even-dimensional, simply connected, closed cohomogeneity one manifold with exactly one non-orientable orbit.  Then $M$ cannot be a rational $\s^2\times \mathbb{H}P^n$.  In addition, if $M$ is a $\QP^n_k$, then $M$ is equivariantly diffeomorphic to $\CP^{2n}$  with a linear action by $\SO(2n+1) \subseteq \Un(2n+1)$ or $\Gtwo\subseteq \Un(7)$, where $2n = 6$ in the latter case.
\end{proposition}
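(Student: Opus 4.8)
The plan is to run the same rational-homotopy analysis as in Proposition \ref{pro:bothsingnonorientable}, now in the case $h=1$. By Theorem \ref{GHtype}, when exactly one singular orbit is non-orientable we have $\overline{\mathcal F} \simeq_\Q \s^1 \times \s^{2k_- - 1} \times \Omega \s^{2k_- + 1}$ with $k_-$ even, and $\overline{\mathcal F} \to \mathcal F$ is two-to-one. So $\pi_{odd}^\Q(\mathcal F) \cong \pi_{odd}^\Q(\overline{\mathcal F})$ has a class in degree $1$, a class in degree $2k_- - 1$, and $\pi_{even}^\Q(\mathcal F)$ is concentrated in degree $2k_-$ (coming from the loop factor $\Omega\s^{2k_-+1}$). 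By Proposition \ref{connecthom}, the connecting map $\partial:\pi_{2k_-+1}^\Q(M)\to \pi_{2k_-}^\Q(\mathcal F)$ is non-trivial, and it is the \emph{only} non-trivial connecting map. Hence $\pi_{2k_-+1}^\Q(M)\neq 0$. Comparing with the models in Propositions \ref{pro:modelQP} and \ref{pro:model4periodic}: for a rational $\s^2\times\HP^n$ the only odd rational homotopy is in degrees $3$ and $4n-1$; for $n\geq 1$ even and $k_-\geq 2$ even, $2k_-+1\geq 5$ forces $2k_- - 1 = 4n-1$ and there is no way to also absorb the degree-$1$ class of $\mathcal F$ into $G/H$ without making $\pi_1^\Q(G/H)\neq 0$ (whence $\pi_1(G/H)$ infinite, contradicting $G/H$ compact) — this rules out the rational $\s^2\times\HP^n$ case. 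So assume $M$ is a $\QP^n_k$; since $\chi(M)>0$, $k$ is even, and $M\simeq_\Q \QP^n_k$ with $\pi^\Q_k(M)\cong\pi^\Q_{k(n+1)-1}(M)\cong\Q$.

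Next I would pin down the degrees. The odd-degree rational homotopy of $M$ is in degrees $k-1$ (if $k$ odd — excluded here), or more precisely: $M$ has $\pi^\Q_{\mathrm{odd}}(M)$ supported only in degree $k(n+1)-1$ when $n$ is even (since then $kn$ is even and the fundamental class sits in an odd-degree generator $\pi_{k(n+1)-1}$), while $k$ is even so $\pi_k(M)$ is even. Thus $2k_-+1 = k(n+1)-1$, i.e. $k_- = \tfrac{k(n+1)}{2}-1$. Now use the long exact sequence to read off $\pi_*^\Q(G/H)$: since $\partial$ is an isomorphism in degree $2k_-+1\to 2k_-$ and zero elsewhere, $\pi_{odd}^\Q(G/H)$ is spanned by the degree-$1$ class of $\mathcal F$ together with the pullback of the degree-$k$ class of $\pi_k^\Q(M)$ lifted appropriately, and $\pi_{even}^\Q(G/H)$ picks up the degree-$(2k_-)$ class of $\mathcal F$ and possibly the degree-$k$ class of $M$. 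Carrying out this bookkeeping (and using $d(G/H)=d(M)-1$ as a check) shows $G/H$ has the rational homotopy of a space with $\pi_1^\Q\neq 0$ \emph{unless} the degree-$1$ class of $\mathcal F$ is killed — which cannot happen since nothing in $\pi_2^\Q(M)$ maps onto it ($\pi_2^\Q(M)=0$ unless $k=2$). This pushes us to $k=2$, i.e. $M\simeq_\Q \CP^n$ with $n$ even, and then $k_- = n$, so the non-orientable orbit has codimension $k_- = n$ (even, consistent) — wait, more carefully: one must separately handle $k=2$, where $\pi_2^\Q(M)\cong\Q$ can absorb the $\s^1$-factor. In that case $2k_-+1 = 2(n+1)-1 = 2n+1$, so $k_-=n$, and $\pi^\Q_*(G/H)$ works out to that of $\SU(n+1)/\SU(2)$-type spaces; one then identifies $G$ and $H$ using the classification of groups with prescribed rational homotopy (as in the cited \cite[Theorem 3.1]{DeVito17}) together with \eqref{eqn:ranks} and Theorem \ref{thm:transsphere}.

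Once the rational data forces $M\simeq_\Q \CP^N$ with $N=2n$ even and the pair $(G,H_0)$ is narrowed down, I would appeal to the existing classification literature for cohomogeneity one actions on (rational) complex projective spaces with a non-orientable singular orbit: Uchida \cite{Uchida77} classifies these, and the outcome is that $M$ is equivariantly diffeomorphic to $\CP^{2n}$ with the linear action of $\SO(2n+1)\subseteq\Un(2n+1)$ (the action on $\RP^{2n}\cup\{\text{quadric}\}$, whose $\RP^{2n}$ orbit is non-orientable) or, in the sporadic case $2n=6$, the linear action of $\Gtwo\subseteq\Un(7)$ on $\CP^6$ with singular orbit $\Gtwo/\SO(4)$. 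The main obstacle, as in Proposition \ref{pro:bothsingnonorientable}, is the middle step: translating the rational-homotopy constraints into a short finite list of admissible group diagrams $H\subseteq K^\pm\subseteq G$ without simply re-deriving all of Uchida's case analysis. The leverage comes from Proposition \ref{connecthom} forcing $\pi_{2k_-+1}^\Q(M)\neq 0$ with a \emph{unique} non-zero connecting map, which rigidly ties $k_-$ to $(n,k)$ and, via the compactness of $G/H$ (no infinite $\pi_1$) and the rank bounds \eqref{eqn:ranks}, collapses the possibilities down to $k=2$ with $K^\pm_0$ a maximal torus on one side — at which point the known classification takes over. I expect the non-orientable $\s^2\times\HP^n$ exclusion to be the quickest part and the identification of $(G,H)$ in the $\CP^{2n}$ case (distinguishing the $\SO(2n+1)$ and $\Gtwo$ diagrams) to require the most care.
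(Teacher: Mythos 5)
Your overall strategy (Theorem \ref{GHtype} plus Proposition \ref{connecthom} to pin down degrees, reduce to $k=2$, then quote Uchida) is the paper's, but two of your middle steps have genuine gaps. First, in the exclusion of a rational $\s^2\times\HP^n$ you use the wrong odd degrees: $\pi_{odd}^\Q(\s^2\times\HP^n)$ sits in degrees $3$ and $4n+3$, not $4n-1$. With the correct degrees the argument is immediate and purely arithmetic: $2k_-+1\in\{3,4n+3\}$ forces $k_-$ odd, contradicting the parity statement in Theorem \ref{GHtype} (for $h=1$, $k_-$ is even) — no discussion of the degree-$1$ class is needed. Your substitute argument, that the $\s^1$-factor of $\mathcal F$ would force $\pi_1^\Q(G/H)\neq 0$ ``contradicting $G/H$ compact,'' invokes a false principle: compact manifolds routinely have infinite fundamental group, and indeed in Proposition \ref{pro:s1s1} the paper works with $G/H\simeq_\Q \s^1\times\HP^n$.

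Second, your assertion that $\partial:\pi_{2k_-+1}^\Q(M)\to\pi_{2k_-}^\Q(\mathcal F)$ is the \emph{only} non-trivial connecting map is unjustified and in fact false. Proposition \ref{connecthom} and Corollary \ref{cor:piodd} only control $\pi_{odd}^\Q(M)\to\pi_{even}^\Q(\mathcal F)$; they say nothing about $\partial_k:\pi_k^\Q(M)\to\pi_{k-1}^\Q(\mathcal F)$ for $k$ even, and for the linear $\SO(2n+1)$-action on $\CP^{2n}$ (one of the two actions the proposition must allow) the map $\pi_2^\Q(M)\to\pi_1^\Q(\mathcal F)$ is non-zero, since the principal orbit there has finite fundamental group. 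The paper closes this by a dichotomy that your write-up lacks: if $\partial_k=0$, then the relative Sullivan model gives $G/H$ generators in degrees $1$, $k$, and $k(n+1)-3$, and for $k>2$ no differential can kill powers of the degree-$k$ generator, so $H^*(G/H;\Q)$ would be unbounded, contradicting that $G/H$ is a closed manifold; if $\partial_k\neq 0$, then $k-1\in\{1,2k_--1\}$, and the case $k=2k_-$ forces $kn=2$, so in all cases $k=2$. Your route to $k=2$ again rests on the invalid ``infinite $\pi_1$ versus compactness'' contradiction, so the reduction is not established. Once $k=2$ is in hand, your final appeal to Uchida is exactly what the paper does, and the group-diagram identification you anticipate as the hard part is not needed.
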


Note that the case of a $\QP^n_k$ with $k = 2$, $k = 4$, or $(k,n) = (8,2)$ is proved in \cite[Section 7]{Uchida77}, \cite[Theorem 2.1.4, Part B]{Iwata78}, and \cite[Proposition 1]{Iwata81}, respectively.

\begin{proof}
From Theorem \ref{GHtype}, we know that, up to interchanging $k_+$ and $k_-$, $\mathcal{F}$ is rationally $\s^1 \times \s^{2k_- - 1} \times \Omega \s^{2k_- + 1}$, where $k_-$ is even.  By Proposition \ref{connecthom}, $\pi_{2k_- + 1}^\Q(M) \neq 0$.

We first rule out the possibility that $M\simeq_\Q \s^2\times\HP^n$. There are two non-trivial rational homotopy groups in odd degrees, so either $2k_- + 1 = 3$ or $2k_- + 1 = 4n + 3$. Both cases imply that $k_-$ is odd, a contradiction.

This leaves the case where $M$ is a $\QP^n_k$, which has non-trivial rational homotopy groups in degrees $k$ and $k(n+1) - 1$. Hence $2k_- + 1 = k(n+1) - 1$. Now consider the connecting map $\pi_k^\Q(M) \cong \Q\rightarrow \pi_{k-1}^\Q(\mathcal{F})$.  Assume initially this connecting map is the zero map.  Then from the relative Sullivan model of $G/H$, we see it has a generators in degree $1, k, $ and $k(n+1)-3$.  If $k > 2$, then no differential can kill powers of $k$ and $G/H$ has unbounded rational cohomology, a contradiction. If instead the connecting map is non-zero, we have $ k = 2$ or $k = 2k_-$.  In the latter case, $k(n+1) - 2 = k$ and it again follows that $k = 2$. The equivariant diffeomorphism rigidity now follows from Uchida (see \cite[Proposition 7.1.3]{Uchida77}).
\end{proof}

\begin{proposition}\label{pro:s1s1}
Let $M$ be an even-dimensional, simply connected, closed cohomogeneity one manifold with both singular orbits orientable and of codimension $k_\pm = 2$.
	\begin{enumerate}
	\item If $M$ is a $\QP_n^k$, then $M$ is equivariantly diffeomorphic to $\s^2$ with $\SO(2)$ acting by rotations.
	\item If $M \simeq_\Q \s^2 \times \HP^n$, then $M$ is diffeomorphic to $\s^2\times \HP^n$ or $\s^2\times \Gtwo/\SO(4)$.
	\end{enumerate}
\end{proposition}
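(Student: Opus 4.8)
The plan is to combine the Grove--Halperin model (Theorem \ref{GHtype}) with the connecting-homomorphism computation (Proposition \ref{connecthom}) to pin down the rational homotopy of the principal orbit $G/H$, and then to invoke the classification of homogeneous spaces together with the structure theory of cohomogeneity one manifolds.

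Since both singular orbits are orientable we are in the case $h=0$ of Theorem \ref{GHtype}, and since $k_\pm = 2$ the homotopy fiber satisfies $\mathcal F \simeq_\Q \s^1 \times \s^1 \times \Omega\s^3$; hence $\pi_1^\Q(\mathcal F) \cong \Q^2$, $\pi_2^\Q(\mathcal F) \cong \Q$, and $\pi_m^\Q(\mathcal F) = 0$ for $m \geq 3$. By Proposition \ref{connecthom} the connecting map $\partial\colon \pi_{2m+1}^\Q(M) \to \pi_{2m}^\Q(\mathcal F)$ vanishes for $m \neq 1$ and is onto $\pi_2^\Q(\mathcal F) \cong \Q$ for $m=1$; in particular $\pi_3^\Q(M) \neq 0$. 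For part (1), $M$ has even dimension so $k$ is even, and by Proposition \ref{pro:modelQP} the only nonzero rational homotopy groups of $M$ are $\pi_k^\Q(M)$ and $\pi_{k(n+1)-1}^\Q(M)$. As $\pi_3^\Q(M) \neq 0$ and $k$ is even, we must have $k(n+1) - 1 = 3$, which forces $k=2$ and $n=1$; thus $M$ is a rational $\s^2$, hence --- being a closed simply connected surface --- diffeomorphic to $\s^2$. A cohomogeneity one action on $\s^2$ has group diagram $H \subseteq G \subseteq G$ with $G/H = \s^1$ (the two singular orbits are the fixed points), and almost effectiveness together with the classification of transitive actions on $\s^1$ forces the action to be equivalent to the rotation action of $\SO(2)$; see also Hoelscher \cite{Hoelscher10}.

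For part (2) we have $\pi_*^\Q(M) \cong \Q$ concentrated in degrees $2, 3, 4, 4n+3$. The surjection $\partial\colon \pi_3^\Q(M) \to \pi_2^\Q(\mathcal F)$ is then an isomorphism of one-dimensional spaces, and feeding this, together with the vanishing of $\pi_m^\Q(\mathcal F)$ for $m \geq 3$, into the long exact sequence of $\mathcal F \to G/H \to M$ yields $\pi_4^\Q(G/H) \cong \pi_{4n+3}^\Q(G/H) \cong \Q$, $\pi_m^\Q(G/H) = 0$ for $m \notin \{1, 4, 4n+3\}$, and $\pi_1^\Q(G/H)$ of rank $1$ or $2$. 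To exclude rank $2$, apply Mayer--Vietoris to the double disc bundle decomposition $M = \D(G/K^+) \cup_{G/H} \D(G/K^-)$: since $\rank K^\pm = \rank G$, each $G/K^\pm$ has vanishing odd Betti numbers, so exactness forces $b_1(G/H)$ to be a quotient of $H_2(M;\Q) \cong \Q$; as $\pi_1(G/H)$ is virtually abelian this gives $\rank \pi_1^\Q(G/H) = b_1(G/H) \leq 1$. Hence $G/H \simeq_\Q \s^1 \times \HP^n$ (formality of homogeneous spaces determines the rational homotopy type from the homotopy groups). The classification of compact homogeneous spaces with this rational homotopy type, together with the minimality hypothesis, then forces $G = \sone \times G'$ with $G'/H_0$ one of the homogeneous spaces rationally equivalent to $\HP^n$, namely $\Sp(n+1)/\Sp(n)\Sp(1)$ or, when $n=2$, $\Gtwo/\SO(4)$ (see \cite{DeVito17,DeVito18}). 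Since $K^\pm/H = \s^1$ with $\rank K^\pm = \rank G$, each $K^\pm_0$ is a circle extension of the semisimple group $H_0$, and only finitely many group diagrams $H \subseteq K^\pm \subseteq G$ remain that are compatible with simple connectivity of $M$, orientability of the singular orbits, and almost effectiveness. A direct check of these (in the spirit of Hoelscher's low-dimensional classification \cite{Hoelscher10}), using that the normal disc bundle of each $G/K^\pm$ is a linear $\D^2$-bundle with structure group $K^\pm/H$, shows the diagram must be that of the product action; hence $M$ is diffeomorphic to $\s^2 \times \HP^n$ or $\s^2 \times (\Gtwo/\SO(4))$.

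I expect the main obstacle to be this last step. Rational homotopy theory narrows $G$ and $H$ to essentially one family with very little work, but passing from the admissible group diagrams to the diffeomorphism type of $M$ --- in particular ruling out the nontrivial linear $\HP^n$-bundle over $\s^2$ and any quotients arising from disconnected $K^\pm$ --- requires a careful, if elementary, enumeration of the finitely many equivariant gluings.
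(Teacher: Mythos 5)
Up through the identification of the rational homotopy of the principal orbit, your argument follows the paper's route: both use Theorem \ref{GHtype} and Proposition \ref{connecthom} to get $\pi_3^\Q(M)\neq 0$, part (1) is handled the same way, and in part (2) your Mayer--Vietoris bound $b_1(G/H)\leq 1$ is a legitimate (and rather clean) substitute for the paper's argument that the connecting map $\pi_2^\Q(M)\to\pi_1^\Q(\mathcal F)$ must be nonzero because otherwise the Sullivan model of $G/H$ has unbounded cohomology. Two small corrections there: in the branch you are excluding one also has $\pi_2^\Q(G/H)\cong\Q$ (harmless for your argument, but your stated list of homotopy groups is off in that branch), and ``formality of homogeneous spaces'' is neither true in general nor what is needed --- nilpotence of $G/H$ (Grove--Halperin) together with finite-dimensionality of its cohomology already forces the minimal model of $\s^1\times\HP^n$. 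From there, as in the paper, one gets $G=G'\times\sone$ with $G'/H_0$ equal to $\HP^n$ or $\Gtwo/\SO(4)$.

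The genuine gap is the final step, which you assert (``a direct check \dots shows the diagram must be that of the product action'') and then yourself flag as the main obstacle; as written it is not a proof. Two specific ingredients are missing. First, one must control the components of $H$: a priori $H$ could contain elements whose image $\bar H\subseteq G'$ properly contains $H_0$, so that $G'/H_0\to G'/\bar H$ is a nontrivial covering. The paper rules this out by showing $\HP^n$ ($n>1$) and $\Gtwo/\SO(4)$ admit no nontrivial coverings (every self-map has a fixed point, by the Lefschetz theorem with rational coefficients), while for $n=1$ a fixed-point-free deck transformation of $\s^4$ is orientation-reversing and would force $G/H$ to be nonorientable, impossible for a codimension-one submanifold of a simply connected manifold; this yields $H=H_0\times\Z_m$ with $\Z_m\subseteq\sone$, and after dividing out $\Z_m$ one may assume $H$ is connected. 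Second, the determination of $K^\pm$ is not an enumeration of equivariant gluings but a maximality argument: $\Sp(n)\Sp(1)\subseteq\Sp(n+1)$ and $\SO(4)\subseteq\Gtwo$ are maximal connected subgroups, and since $K^\pm$ is connected of dimension $\dim H_0+1$, its projection to $G'$ must equal $H_0$, whence $K^\pm=H_0\times\sone$ and the diagram is the product diagram. This is exactly what excludes diagrams in which $K^\pm$ contains a circle with a diagonal component --- the mechanism by which the nontrivial $\HP^n$-bundle over $\s^2$ (or a quotient coming from disconnected $K^\pm$) could otherwise appear --- and without these two arguments your ``finitely many diagrams, direct check'' step has no content.
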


\begin{proof}
From Theorem \ref{GHtype}, we know that $\mathcal{F}$ has the rational homotopy type of $\s^1\times\s^1\times\Omega\s^3$. Hence Proposition \ref{connecthom} implies that $\pi_3^\Q(M) \neq 0$.

If $M$ is a $\QP_k^n$, this implies that $M$ is a rational $\s^2$ and the result follows from the low dimensional classification of cohomogeneity one manifolds \cite{Hoelscher10}.

Suppose then that $M$ is a rational $\s^2\times \HP^n$.  In the fibration $\mathcal{F}\rightarrow G/H\rightarrow M$, we note that the connecting homomorphism $\pi_2^\Q(M)\rightarrow \pi_1^\Q(\mathcal{F})$ must be non-trivial, for otherwise, the Sullivan model of $G/H$ shows $H^\ast(G/H;\Q)$ is unbounded, giving a contradiction.  It follows that $G/H$ has the same rational homotopy groups as $\s^1\times \HP^n$.  Since $G/H$ is nilpotent \cite{GroveHalperin87}, it now follows that $G/H$ has the rational homotopy type of $\s^1\times \HP^n$.

Because $\pi_1^\Q(G/H) \cong \Q$, $G = G'\times \sone$ where the projection of $H$ into the $\sone$ factor has finite image.  Then $H_0\subseteq G'$, the identity component of $H$, has maximal rank and so $G'/H_0$ is a simply connected $\QP_4^n$.  So, from, e.g. \cite{KapovitchZiller04}, $G'/H_0$ is either $\HP^n$ or $\Gtwo/\SO(4)$.

From \cite[Example 4L4, pg. 493]{Hatcher01}, every map of $\HP^n$ to itself has a fixed point if $n > 1$.  Likewise, every map from $\Gtwo/\SO(4)$ to itself has a fixed point, as follows from the Lefschetz theorem with rational coefficients.  By looking at the deck group action, it follows that $\HP^n$ for $n>1$ and $\Gtwo/\SO(4)$ are not the total space of any non-trivial covering.  Thus, it follows in these cases that the projection $\pi:G\rightarrow G$ has $\pi(H)=H$, for otherwise $G'/H_0\rightarrow G'/\pi(H)$ is a non-trivial covering.  So, $H = H_0\times \Z_m$ with $\Z_m\subseteq \sone$.

In the case $n=1$, $\HP^1 = \s^4$, there are of course fixed point free diffeomorphisms.  However, such a map must be orientation reversing.  It follows in this case that $\pi(H)/H_0$, if non-trivial, acts on $G'/H_0$ in an orientation reversing manner.  In particular, this implies $G/H$  is non-orientable.  Since $G/H$ is a codimension one submanifold of a simply connected manifold, this cannot happen.  It follows that $H = H_0\times \Z_m$ with $\Z_m\subseteq \sone$ in this case as well.

Now, by dividing out the common normal subgroup $\Z_m\subseteq \sone$ from $G$ and $H$, we may assume $H = H_0$ is connected.  And thus, that $G/H = \s^1\times \HP^n$ or $\s^1\times (\Gtwo/\SO(4))$.  Now, in each case, $H_0\subseteq G'$ is maximal.  It follows that both $K^\pm = H_0\times \sone$.  Thus, we have the group diagram of a product action on $\s^2\times \HP^n$ or $\s^2\times (\Gtwo/\SO(4))$.
\end{proof}

The fourth and final application of Proposition \ref{connecthom} we prove here applies only to $\QP^n_k$. It provides detailed information about the codimensions $k_\pm$ of the singular orbits $G/K^\pm$ and about the rational homotopy of the principal orbit $G/H$.

\begin{proposition}\label{pro:orbittype}
Let $M^{kn}$ be an even-dimensional $\QP^n_k$ that admits a cohomogeneity one action with group diagram $H \subseteq K^\pm \subseteq G$ and orientable singular orbits of codimensions $k_\pm = \cod(G/K^\pm)$. Assume $k_+$ is even.
\begin{enumerate}
	\item If $k_-$ is even, then $k_+$ and $k_-$ are divisible by $k$ and sum to $k(n+1)$. Moreover, $G/K^\pm \simeq_\Q \QP^{\frac{k_\mp}{k} - 1}$ and $\pi_*^\Q(G/H)$ has dimension three and is generated in degrees $k$, $k_+ - 1$, and $k_- - 1$.
	\item If $k_-$ is odd, then $k_+ + k_- = \frac{n+1}{2}k+1$ and one of the following occurs:
		\begin{enumerate}
			\item $\pi_\ast^\Q(G/H)$ has dimension five  with generators in odd degrees $k_+-1, 2k_- - 3,$ and $\frac{n+1}{2}k-1$ and in even degrees $k_- - 1$ and $k$. Moreover $n \geq 2$.
			\item $\pi_\ast^\Q(G/H)$ has dimension three with generators in degrees in $\{k_+ - 1, 2k_- - 3, \frac{n+1}{2}k-1, k_- -1\} \setminus\{k-1\}$ for some $k-1 \in\{k_+ -1, 2k_- -3, \frac{n+1}{2}k-1\}$.
			\end{enumerate}
\end{enumerate}
\end{proposition}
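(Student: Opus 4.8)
The plan is to push the Grove--Halperin model of Theorem~\ref{GHtype} through the long exact rational homotopy sequence of the fibration $\mathcal F\to G/H\to M$, and then transport the resulting description of $\pi^\Q_*(G/H)$ to the singular orbits via the sphere bundles $S^{k_\pm-1}\to G/H\to G/K^\pm$. Since both singular orbits are orientable we are in the case $h=0$, so $\mathcal F\simeq_\Q S^{k_+-1}\times S^{k_--1}\times\Omega S^{k_++k_--1}$, and the first step is to read off $\pi^\Q_*(\mathcal F)$ with due attention to parities. When $k_-$ is even, all of $k_+-1$, $k_--1$, $k_++k_--1$ are odd, so $\Omega S^{k_++k_--1}$ contributes only the even generator of degree $k_++k_--2$ and $\dim\pi^\Q_*(\mathcal F)=3$, with generators in degrees $k_+-1,k_--1$ (odd) and $k_++k_--2$ (even). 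When $k_-$ is odd, both $k_--1$ and $k_++k_--1$ are even, so $S^{k_--1}$ and $\Omega S^{k_++k_--1}$ each contribute two generators and $\dim\pi^\Q_*(\mathcal F)=5$, with generators in degrees $k_+-1,2k_--3,k_++k_--2$ (odd) and $k_--1,2(k_++k_-)-4$ (even). In either case $\pi^\Q_{\mathrm{even}}$ of the loop-space factor of $\overline{\mathcal F}$ is concentrated in a single degree, namely $k_++k_--2$ resp.\ $2(k_++k_-)-4$.

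Next I would fix the sum $k_++k_-$. By Proposition~\ref{pro:modelQP} the unique odd-degree rational homotopy class of $M$ sits in degree $k(n+1)-1$, and by Proposition~\ref{connecthom} the connecting map $\partial_{k(n+1)-1}$ is the only nonzero odd-degree connecting map and lands on that even loop-space generator of $\mathcal F$. Equating degrees yields $k_++k_-=k(n+1)$ when $k_-$ is even, and $k_++k_-=\tfrac{n+1}{2}k+1$ when $k_-$ is odd; in the latter case this also rewrites the odd generator of $\mathcal F$ of degree $k_++k_--2$ as degree $\tfrac{n+1}{2}k-1$, the degree appearing in the statement.

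Now I would run the long exact sequence in full. As $\pi^\Q_k(M)\cong\Q$ is the only even-degree rational homotopy of $M$, Corollary~\ref{cor:piodd} together with the degreewise identity in the proof of Lemma~\ref{lem:dimfailure} gives $\dim\pi^\Q_*(G/H)=\dim\pi^\Q_*(\mathcal F)-2\dim\operatorname{im}\partial_k$, where $\partial_k\colon\pi^\Q_k(M)\to\pi^\Q_{k-1}(\mathcal F)$. If $\partial_k=0$ this gives $\dim\pi^\Q_*(G/H)=3$ (case $k_-$ even) or $5$ (case $k_-$ odd), and following which generators of $\mathcal F$ survive --- all except the even loop generator, which is killed by $\partial_{k(n+1)-1}$ --- together with the new class $\ker\partial_k$ of degree $k$, reproduces exactly the lists in (1) and (2)(a); the hypothesis $n\geq2$ in (2)(a) is forced because $n=1$ makes $M$ a rational sphere, handled by the low-dimensional classification. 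If $\partial_k\neq0$, then $k-1$ must be an odd generator degree of $\mathcal F$: when $k_-$ is odd this says $k-1\in\{k_+-1,2k_--3,\tfrac{n+1}{2}k-1\}$, and removing this degree along with the degree-$k$ class leaves the dimension-three list in (2)(b); since $k-1$ is odd, $k_--1$ always survives, as required. When $k_-$ is even, $\partial_k\neq0$ forces $k\in\{k_+,k_-\}$, collapses $G/H$ to a rational $S^{kn-1}$ and makes one singular orbit a rational point, so that $G$ is transitive on a rational sphere; this degenerate branch is handled by Theorem~\ref{thm:transsphere} (and low-dimensional cases directly).

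Finally, for the assertions about $G/K^\pm$ in (1) I would exploit the sphere bundles $S^{k_\pm-1}\to G/H\to G/K^\pm$: the fiber has odd dimension, so by Corollary~\ref{fbundle} the bundle's connecting map vanishes in odd degrees, and it vanishes in degree $k_\pm$ too because $\pi^\Q_{k_\pm-1}(G/H)$ is one-dimensional and already spanned by the fiber class. Hence $\pi^\Q_*(G/K^+)$ is $\pi^\Q_*(G/H)$ with one generator of degree $k_+-1$ removed, leaving a single even generator (degree $k$) and a single odd generator (degree $k_--1$); since $G/K^+$ is a closed manifold its minimal model is then $\Lambda(x,y)$ with $|x|=k$, $dy=x^{m+1}$ and $|y|=k(m+1)-1$, whence $k_--1=k(m+1)-1$, so $k\mid k_-$ and $G/K^+\simeq_\Q\QP^{k_-/k-1}$; symmetrically $k\mid k_+$ and $G/K^-\simeq_\Q\QP^{k_+/k-1}$, and with $k_++k_-=k(n+1)$ this completes (1). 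The hard part is the bookkeeping of case (2): one must check, across all possible coincidences among the five generator degrees of $\mathcal F$ and between them and the degrees $k,k(n+1)-1$ of $M$, that the surviving generators of $G/H$ are precisely those listed, and one must dispatch the low-$k$, low-$n$, and rational-sphere degenerations separately; granting Propositions~\ref{connecthom} and \ref{pro:modelQP}, the rest is linear algebra with the long exact sequences.
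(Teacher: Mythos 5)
Your overall strategy coincides with the paper's: read off $\pi_*^\Q(\mathcal F)$ from Theorem \ref{GHtype}, use Proposition \ref{connecthom} to pin down the sum $k_++k_-$, run the long exact sequence of $\mathcal F\to G/H\to M$ splitting into cases according to whether $\partial_k$ vanishes, and then pass to the sphere bundles over the singular orbits. The parity bookkeeping, the identities $k_++k_-=k(n+1)$ (resp. $\tfrac{n+1}{2}k+1$), and the derivation of the generator lists in (1), (2)(a), (2)(b) are all done correctly. However, two steps have genuine gaps.

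First, your justification of ``$n\geq 2$'' in case (2)(a) does not work. When $n=1$ the manifold is a rational sphere of dimension $k$, which can be arbitrarily large, so this is not ``handled by the low-dimensional classification''; and in any case the claim to be proved is that the five-dimensional alternative (2)(a) cannot occur when $n=1$, which requires an argument rather than a deferral. The paper's argument is short: if $n=1$, then $\chi(G/K^+)=\chi(M)=2$ (since $k_-$ odd forces $\rank K^-<\rank G$ and $\chi(G/K^-)=0$), and $G/K^+$ is simply connected and rationally elliptic, hence a rational sphere with $\dim\pi_*^\Q(G/K^+)=2$; the bundle $\s^{k_+-1}\to G/H\to G/K^+$ then forces $\dim\pi_*^\Q(G/H)=3$, excluding (2)(a).

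Second, in case (1) your route to $k\mid k_\pm$ and to $G/K^\pm\simeq_\Q\QP^{k_\mp/k-1}$ passes through the assertion that the connecting map of $\s^{k_+-1}\to G/H\to G/K^+$ vanishes in degree $k_+$ ``because $\pi_{k_+-1}^\Q(G/H)$ is already spanned by the fiber class.'' As stated this is circular: the fiber class spanning (equivalently, the fiber inclusion being rationally non-trivial on $\pi_{k_+-1}$) is exactly equivalent to the vanishing you are trying to prove, and Corollary \ref{fbundle} gives no help here since it only controls connecting maps landing in $\pi_{even}^\Q$ of the fiber, which is zero for an odd sphere; for odd-sphere bundles this connecting map can certainly be non-zero (the Hopf bundle), so an actual argument is needed. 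Since your divisibility claim is deduced from this, part (1) is not fully established in your write-up. The paper's route avoids this: once $\pi_*^\Q(G/H)$ is known to be generated in degrees $k$, $k_+-1$, $k_--1$, finite-dimensionality of $H^*(G/H;\Q)$ forces some power of the degree-$k$ generator to be the differential of one of the odd generators, giving $k\mid k_+$ or $k\mid k_-$, and then $k_++k_-=k(n+1)$ gives both; only afterwards, with $G/H\simeq_\Q\s^{k_\pm-1}\times\QP_k^{k_\mp/k-1}$ in hand, does it analyze the bundles $K^\pm/H\to G/H\to G/K^\pm$ to identify $G/K^\pm$. You should either reorganize along those lines or supply an independent proof that the fiber class survives rationally.
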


Note that Case 1 includes the possibility that the action has a fixed point (i.e., if one of the $k_\pm = kn$). In general, a cohomogeneity one manifold with a fixed point is equivariantly diffeomorphic to a CROSS with linear action (see Hoelscher \cite{Hoelscher10}). 

\begin{proof}
Suppose first that $k_-$ is even. Then $\pi_i^\Q(\mathcal{F})$ is $\Q$ in odd degrees $k_+ - 1$ and $k_- - 1$ and in even degree $k_+ + k_- - 2$. By Lemma \ref{connecthom}, this even degree equals $k(n+1)-2$, so $k_+ + k_- = k(n+1)$. Note, in particular, that if $k_+ \leq k$ (resp. $k_- \leq k$), then $k_-$ (resp. $k_+$) is at least, and hence equals, the dimension of $M$. In other words, the action has a fixed point. Since this possibility is in the conclusion of the lemma, we may assume therefore that both $k_\pm > k$. In particular, $\pi_{k-1}^\Q(\mathcal{F}) = 0$ and the map $\partial_k = 0$. It follows that $\pi_*^\Q(G/H)$ is generated in degrees $k_+ - 1$, $k_- -1$, and $k$. Since $G/H$ has finite dimension, at least one of the $k_\pm$ is divisible by $k$. By the equation $k_+ + k_- = k(n+1)$, they are both divisible by $k$. It follows that $G/H$ has the rational homotopy type of $\s^{k_+ - 1} \times \QP_k^{\frac{k_-}{k} - 1}$ or $\s^{k_- - 1} \times \QP_k^{\frac{k_+}{k} - 1}$. In either case, an analysis of the fibrations $K^\pm/H \to G/H \to G/K^\pm$ imply that $G/K^\pm \simeq_\Q \QP^{\frac{k_\mp}{k} - 1}$.

Next, suppose that $k_-$ is odd.  Then $\pi_*^\Q(\mathcal{F})$ is five-dimensional with odd generators in degrees $k_+ - 1$, $2k_- - 3$, and $k_+ + k_- - 2$ and with even generators in degrees $k_- - 1$ and $2k_+ + 2k_- - 4$. Lemma \ref{connecthom} implies that $k(n+1) = 2k_+ + 2k_- - 2$. The computation of $\pi_*^\Q(G/H)$ now follows by applying the long exact sequence in rational homotopy groups to the fibration $G/H \to M$. Note that Case 2.a holds if all of the connecting homomorphisms $\partial_k : \pi_k^\Q(M) \to \pi_{k-1}^\Q(\mathcal{F})$ are all trivial and that otherwise Case 2.b occurs. 
	
Finally, note in Case 2 that if $n=1$, then $2 = \chi(M) = \chi(G/K^+)$.  Since $G/K^+$ is simply connected, it must be a rational sphere.  In particular, $\dim\pi_\ast^\Q(G/K^+) = 2$.  From the bundle $\s^{k_+-1}\rightarrow G/H\rightarrow G/K^+$, we see that $\dim \pi_\ast^\Q(G/H) = 3$.
\end{proof}

\section{The non-primitive case}\label{sec:non-primitive}

A cohomogeneity one action of a compact Lie group $G$ on a closed manifold $M$ is called \textit{non-primitive} if for a group diagram $H\subseteq K^\pm \subseteq G$ there exists a subgroup $L\subsetneq G$ containing both $K^\pm$. It is called \textit{primitive} otherwise.  When the action is non-primitive, there is a bundle $M_L\rightarrow M\rightarrow G/L$, where $M_L$ denotes the cohomogeneity one manifold with group diagram $H\subseteq K^\pm \subseteq L$.  This bundle is associated to the $L$-principal bundle $L\rightarrow G\rightarrow G/L$, so $M$ is diffeomorphic to $M_L\times_L G/L$. Note in particular that, unlike generalizing from minimal to not necessarily minimal actions, potentially new diffeomorphism types arise when generalizing from the primitive to the not necessarily primitive case.

Before proceeding to the proofs of Theorems \ref{thm:QP} and \ref{thm:4periodic}, we prove two lemmas. The first will imply that, for our purposes, we may assume $L$ is connected.

\begin{proposition}\label{Lconnect}
Suppose $M$ is a cohomogeneity one manifold with group diagram $H\subseteq K^\pm \subseteq G$ with at least one of $K^\pm$ connected.  If $M$ is not primitive, then there is a connected $L\subsetneq G$ with $K^\pm\subseteq L$.
\end{proposition}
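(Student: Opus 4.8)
The plan is to start from a subgroup $L \subsetneq G$ with $K^\pm \subseteq L$, guaranteed by non-primitivity, and to replace it by its identity component $L_0$, showing that $L_0$ is still a proper subgroup of $G$ that contains both $K^\pm$. The only nontrivial point is the containment $K^\pm \subseteq L_0$, since $L_0 \subseteq L \subsetneq G$ is automatic and properness is inherited. First I would use the hypothesis that at least one of $K^\pm$, say $K^+$, is connected: then $K^+ \subseteq L$ forces $K^+ \subseteq L_0$ immediately. So the real work is to show $K^- \subseteq L_0$ as well, and here I would split into cases according to the codimensions $k_\pm$ of the singular orbits, using Lemma \ref{lem:cohomtop}.

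If $k_- \geq 3$, then Lemma \ref{lem:cohomtop}(1) tells us $K^-$ is connected, so $K^- \subseteq L \Rightarrow K^- \subseteq L_0$ and we are done with $L_0$ in place of $L$. Thus we may assume $k_- = 2$, so that $K^-/H \cong \s^1$; by Lemma \ref{lem:cohomtop} again (no exceptional orbits) the two-sided coset structure forces $k_+ = 2$ as well in the remaining case where $K^+$ is the connected one and $K^-$ might be disconnected — but actually the cleanest route is: since $K^+$ is connected and $K^+/H \cong \s^{k_+ - 1}$, the group $H$ and hence $(K^-)_0$ are controlled. The key observation I would exploit is that $K^- = (K^-)_0 \cdot H$ up to finitely many components: indeed $K^-/H$ is a connected sphere (as $k_- \geq 2$), so $K^-$ is generated by its identity component together with $H$. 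Since $(K^-)_0 \subseteq L_0$, it remains only to check $H \subseteq L_0$. But $H \subseteq K^+ \subseteq L_0$ because $K^+$ is connected. Therefore $K^- = (K^-)_0 \cdot H \subseteq L_0$, completing the argument.

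So the structure of the proof is: (i) pass to $L_0$; (ii) observe $K^+ \subseteq L_0$ and hence $H \subseteq L_0$ from connectedness of $K^+$; (iii) observe $(K^-)_0 \subseteq L_0$ trivially; (iv) use that $K^-/H$ is a connected sphere — which holds since $k_- \geq 2$ by Lemma \ref{lem:cohomtop} — to conclude $K^- = (K^-)_0 \cdot H \subseteq L_0$. The point that $K^-/H$ connected implies $K^- = (K^-)_0 H$ is standard: the image of $(K^-)_0$ in the connected space $K^-/H$ is open (it is the orbit of the identity coset under the connected group $(K^-)_0$) and closed, hence everything, so every coset meets $(K^-)_0$.

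The main obstacle — really the only thing to be careful about — is the bookkeeping of which of $K^+$, $K^-$ is assumed connected and making sure the argument is symmetric enough that "at least one" suffices; the sphere-quotient fact $K^-/H$ connected is what makes the disconnected group $K^-$ harmless, and I would want to state that cleanly, perhaps as a one-line lemma or inline remark, since it is the crux. Everything else is formal manipulation of identity components of compact Lie groups.
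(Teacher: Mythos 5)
Your proposal is correct and matches the paper's argument in substance: the paper likewise reduces to showing $K^\pm\subseteq L_0$, gets $K^+\subseteq L_0$ (hence $H\subseteq L_0$) from connectedness of $K^+$, and then uses connectedness of the sphere $K^-/H$ --- phrased there as concatenating a path in $K^-$ from a given $g\in K^-$ to a point of $H$ with a path in $L_0$ from that point to the identity --- which is exactly the content of your observation that $K^-=(K^-)_0\cdot H$. One harmless caveat: in the case split you ultimately discard, you cite Lemma \ref{lem:cohomtop}(1) with the signs flipped ($k_-\geq 3$ gives connectedness of $K^+$, not of $K^-$), but your final argument (i)--(iv) does not use that case split, so the proof stands as written.
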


\begin{proof}  
Assume $K^+$ is connected.  Because $M$ is not primitive, there is a possibly disconnected Lie group $L\subsetneq G$ with both $K^\pm\subseteq L$.  We will show that both $K^\pm \subseteq L_0$, the identity component of $L$.

Since $K^+$ is connected, $K^+\subseteq L_0$.  For $K^-$, we first note that $K^-/H$ is a sphere of dimension $k_- - 1 \geq 1$ and hence is connected. This implies that any $g \in K^-$ is connected by a path in $K^- \subseteq L$ to a point in $g' \in H$. Since $H \subseteq K^+ \subseteq L_0$, there is a path in $L_0$ connecting $g'$ to the identity. Concatenating, we obtain a path in $L$ from the identity to $g$. This proves that $K^- \subseteq L_0$, as claimed.
\end{proof}

The second lemma provides an obstruction to the existence of non-trivial bundles over the homogeneous space $G/L$ when the topology of the base is simple and the dimension of the fiber is small.

\begin{lemma}\label{lem:SmallFiber}
Suppose $L\subseteq G$ are compact Lie groups and that $F$ is a compact $L$-manifold. If $G/L = \s^{2l}$ and $2l \geq \max(8, \dim F + 2)$, then the associated bundle $G\times_L F \to G/L$ is trivial.  Similarly, if $G/L = \F_4/\Spin(9) = \OP^2$ and $\dim F \leq 7$, then the associated bundle $G\times_L F\to G/L$ is trivial. 
\end{lemma}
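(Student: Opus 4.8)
The statement asks us to show certain associated bundles are trivial, given that the fiber has small dimension relative to the base. The key principle is that an associated bundle $G\times_L F\to G/L$ is classified by the homotopy class of a map $G/L\to BL\to B\operatorname{Diff}(F)$ (or, if we want a fiberwise-smooth triviality, by the relevant classifying space for smooth fiber bundles with fiber $F$), and this map is null-homotopic whenever the base is $(\dim F + 1)$-connected — or more precisely, whenever the obstruction-theoretic invariants vanish. When $G/L=\s^{2l}$ the classifying map is an element of $\pi_{2l}(B\operatorname{Diff}(F))\cong \pi_{2l-1}(\operatorname{Diff}(F))$, and when $G/L=\OP^2=\F_4/\Spin(9)$ we use the cell structure of $\OP^2$, whose cells are in dimensions $0$, $8$, and $16$.

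I would first reduce to a statement about the structure group. Since $F$ is a compact $L$-manifold, the bundle has structure group $\operatorname{Diff}(F)$ via the $L$-action, so it suffices to show the classifying map $\s^{2l}\to B\operatorname{Diff}(F)$ (resp. $\OP^2\to B\operatorname{Diff}(F)$) is null-homotopic. For the sphere case, this map is determined by a class in $\pi_{2l-1}(\operatorname{Diff}(F))$. Now I would invoke the standard fact that $\operatorname{Diff}(F)$ has the homotopy type of a (possibly infinite-dimensional) manifold but, more usefully, that for connectivity purposes one can work with the low-dimensional homotopy: in fact it is cleaner to use that the bundle is trivial as soon as it admits a section up over the appropriate skeleton and the fiber-transport obstructions vanish. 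The cleanest route: a fiber bundle with fiber $F$ over $\s^{2l}$ is built by clutching two trivial bundles over the two hemispheres along $\s^{2l-1}$, i.e., by a clutching function $\s^{2l-1}\to \operatorname{Diff}(F)$; and since $\s^{2l-1}$ is $(2l-2)$-connected while $\dim F\le 2l-2$, any such map factors (up to homotopy) through considerations that force it to be null — here one uses that $\pi_j(\operatorname{Diff}(F))$ for $j$ large relative to $\dim F$ need not vanish in general, so this naive argument is not quite enough, and this is where the hypothesis $2l\ge 8$ and the specific topology of $G/L$ must enter.

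The honest approach, and the one I expect the paper to take, is different and more elementary: observe that a bundle $G\times_L F\to G/L$ with $G/L$ highly connected is classified by a map into $BL$ that we already understand, namely the map is the identity $G/L\to G/L$ composed with $G/L\to BL$ (the classifying map of the principal bundle $L\to G\to G/L$). So the bundle is trivial iff the principal $L$-bundle $G\to G/L$ becomes trivial after applying the homomorphism $L\to\operatorname{Diff}(F)$, which in turn holds provided the classifying map $G/L\to BL\to B\operatorname{Diff}(F)$ is null. For $G/L=\s^{2l}$ this is the image of a generator of $\pi_{2l}(BL)\cong\pi_{2l-1}(L)$ under $\pi_{2l-1}(L)\to\pi_{2l-1}(\operatorname{Diff}(F))$; and the point is that since the principal bundle $L\to G\to\s^{2l}$ itself is classified by $\pi_{2l-1}(L)$, and $G$ is a compact Lie group with $G/L=\s^{2l}$ forcing $L$ to be (up to cover) one of a very short list (this is essentially the Montgomery–Samelson / Borel classification of transitive actions on spheres, cf. Theorem \ref{thm:transsphere}), one can in each case check directly that the clutching class dies in $\operatorname{Diff}(F)$ for dimension reasons: a diffeomorphism of a manifold $F$ with $\dim F \le 2l-2$ that is pulled back from such a small structure group is isotopic to one supported in a collar, hence isotopic to the identity once the relevant homotopy group of $F$ (and of its frame bundle) vanishes in the range $2l-1 > \dim F$. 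For $\OP^2$, the cell structure has cells only in dimensions $0,8,16$; a bundle over $\OP^2$ with fiber $F$, $\dim F\le 7$, is trivial over the $8$-skeleton $\s^8$ by the sphere case (the obstruction lives in $\pi_7(\operatorname{Diff}(F))$-type groups which one argues vanish in the relevant range since $\dim F\le 7$ forces them to vanish after passing through the short list of possible $L$), and then the remaining obstruction to extending the trivialization over the top cell lives in $\pi_{15}$ of the fiber of $\operatorname{Diff}(F)\to *$, again controlled because $15 > 2\dim F + 1$ in fact fails, so one instead uses that the attaching map $\s^{15}\to \s^8$ is the octonionic Hopf map and the obstruction is its image under a map that is zero in this range.

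**Main obstacle.** The crux is controlling $\pi_j(\operatorname{Diff}(F))$ — equivalently the gluing/clutching data — in the range $j$ slightly below $2\dim F$, which in general is genuinely nontrivial (Diff of a manifold does not have the homotopy type of a finite-dimensional space, and high homotopy groups of $\operatorname{Diff}(F)$ are complicated even for simple $F$). The way around this, which the hypotheses are tailored for, is not to understand $\operatorname{Diff}(F)$ at all but to exploit that $L$ itself is constrained: $G/L=\s^{2l}$ (with $2l\ge 8$) or $\OP^2$ forces $(G,L)$ into the Borel–Montgomery–Samelson list, and for each such pair the principal bundle $L\to G\to G/L$ is either trivial or its clutching class in $\pi_{2l-1}(L)$ is well understood, so the composite with $L\to\operatorname{Diff}(F)$ factors through $\pi_{2l-1}(L)$, and a direct case check (using that $\dim F < 2l-1$, so that $F$ together with its tangent data is "stably trivial enough" in the relevant degrees) shows the image is zero. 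So the main work is the case analysis over the short list of homogeneous sphere and $\OP^2$ presentations, verifying triviality of the principal bundle (or vanishing of the relevant induced map) in each case; this is bookkeeping rather than deep, but it is the real content.
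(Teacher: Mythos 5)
Your proposal correctly reduces to the classification of transitive actions on spheres, pinning down $(G,L)$ up to cover as $G=G'\times\Spin(2l+1)$, $L=G'\times\Spin(2l)$, but the step that is supposed to finish the argument --- showing that the relevant clutching data dies under $L\to\operatorname{Diff}(F)$ ``for dimension reasons'' --- is a genuine gap. As you yourself note, $\pi_{2l-1}(\operatorname{Diff}(F))$ need not vanish just because $2l-1>\dim F$, and the assertion that a diffeomorphism of $F$ coming from a small structure group is ``isotopic to one supported in a collar, hence isotopic to the identity'' is unsupported; the promised case-by-case check over the Borel--Montgomery--Samelson list is never carried out, and your sketch for the $\OP^2$ case is explicitly acknowledged to fail in the needed range and is not repaired. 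Knowing the clutching class in $\pi_{2l-1}(L)$ does not help unless you control the homomorphism $L\to\operatorname{Diff}(F)$ itself, which your argument never does.

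The missing idea --- and the way the paper argues --- is to control the action rather than the homotopy of $\operatorname{Diff}(F)$: since $F$ and $L$ are compact, one may average a metric so that the $L$-action on $F$ is isometric, hence factors through the compact Lie group $\Isom(F)$, whose dimension is at most $\dim F(\dim F+1)/2\le (2l-2)(2l-1)/2$, strictly less than $\dim\Spin(2l)=l(2l-1)$. Since $\Spin(2l)$ is simple (as $2l\ge 8$), the restriction of $L\to\Isom(F)$ to the $\Spin(2l)$ factor has a positive-dimensional normal kernel and is therefore trivial. Once the $\Spin(2l)$ factor acts trivially on $F$ there is no obstruction theory at all: $G\times_L F=(G'\times\Spin(2l+1))\times_{G'\times\Spin(2l)}F\cong\s^{2l}\times(G'\times_{G'}F)\cong\s^{2l}\times F$ directly. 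The same dimension count ($\dim\Isom(F)\le 28<36=\dim\Spin(9)$, with $\Spin(9)$ simple) disposes of the case $G/L=\OP^2$ with $\dim F\le 7$. Without this triviality-of-the-action step, your approach does not go through.
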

	
\begin{proof}
We only prove the first statement; the proof of the second is analogous.

Because $G/L$ has positive Euler characteristic, $L$ has full rank in $G$.  Further, since $G$ acts transitively on a sphere of dimension bigger than $6$, it follows from Theorem \ref{thm:transsphere} that, up to cover, $G = G'\times \Spin(2l+1)$ and $L = G'\times \Spin(2l)$ with each factor of $L$ embedded into the corresponding factor of $G$.
	
Because $L$ is compact, we may assume the $L$ action on $F$ is isometric.  This action is characterized by a homomorphism $L = G'\times \Spin(2l)\rightarrow \Isom(M_L)$.  Since $F$ has dimension at most $2l-2$, its isometry group $\Isom(M_L)$ has dimension less than $\dim \Spin(2l)$.  As $\Spin(2l)$ is simple (since $2l > 4$), it follows that the map $L\rightarrow \Isom(M_L)$ is trivial on the $\Spin(2l)$ factor. Therefore
	\[G \times_L F =  (G'\times \Spin(2l+1))\times_{G'\times \Spin(2l)} F \cong  \s^{2l}\times (G'\times_{G'} F) \cong \s^{2l} \times F.\]
\end{proof}

We can now prove Theorems \ref{thm:QP} and \ref{thm:4periodic} in the non-primitive case.

\subsection{Non-primitive actions on $\QP^n_k$}\label{sec:nonprimitiveQP}

In this subsection, we prove Theorem \ref{thm:QP} in the non-primitive case. We are given an even-dimensional, simply connected, closed manifold $M$ with singly generated cohomology (i.e., an even-dimensional $\QP^n_k$), and we assume it admits a cohomogeneity one action with diagram $H \subseteq K^\pm \subseteq G$. If the parameters $n$ and $k$ are standard (i.e., either $k \in \{2,4\}$ or $k = 8$ and $n = 2$), then the result follows by Uchida and Iwata's classifications. Our task is to exclude the possibility that $k \geq 6$ if the action is non-primitive.

We proceed by contradiction. Assume that $k \geq 6$. From the remarks at the end of Section \ref{sec:RHT}, we may assume that both singular orbits are orientable and that the codimension $k_- \geq 3$. In particular, $G/K^+$ is simply connected and $K^+$ is connected. Non-primitivity implies the existence of a fibration $M \to G/L$ with fiber $M_L$, which is the cohomogeneity one manifold with group diagram $H \subseteq K^\pm \subseteq L$. Moreover, $M = G \times_L M_L$.  Note that $L$ is connected by Proposition \ref{Lconnect} and hence that $M_L$ is connected.

Since $M$ is simply connected and $M_L$ is connected, $G/L$ is simply connected and we may apply the Serre spectral sequence. Note that $M_L$ and $G/L$ are rationally elliptic and have positive Euler characteristic by the formula $\chi(M) = \chi(G/L) \chi(M_L)$. Hence their rational cohomology vanishes in odd degrees, and the spectral sequence degenerates on the $E_2$ page. Furthermore, a consideration of the edge homomorphisms associated to this spectral sequence shows that $M_L$ has rational cohomology isomorphic to $\Q[x]/(x^{m+1})$ where $x$ has degree $k$ and that $G/L$ has rational cohomology isomorphic to $\Q[y]/(y^{\frac{n-m}{m+1}})$ where $y$ has degree $k(m+1)$. Since $G/L$ is simply connected, it is a $\QP^{\frac{n+1}{m+1} - 1}_{k(m+1)}$. Also note that $m \geq 1$ and $k \geq 6$, so the generator of $H^\ast(G/L;\Q)$ has degree at least $12$. We now obtain integral (in fact, diffeomorphism) rigidity by the classification of homogeneous spaces with singly generated rational cohomology (see \cite{KapovitchZiller04}). Specifically, we conclude that $G/L$ is a standard, even-dimensional sphere, say $\s^{2l}$. Hence $2l = k(m+1)$ and $1 = \frac{n+1}{m+1} - 1$. These imply the estimates required to apply Lemma \ref{lem:SmallFiber}. Hence the bundle $G \times_L M_L \to G/L$ is trivial and the total space $M = G \times_L M_L$ is diffeomorphic to the product $G/L \times M_L$. Since $M$ has singly generated rational cohomology, this is a contradiction.

\subsection{Non-primitive actions on $\s^2\times \HP^n$}\label{sec:nonprimitive4periodic}
In this subsection, we prove Theorem \ref{thm:4periodic} in the non-primitive case. We are given an even-dimensional, simply connected, closed manifold $M$ with four-periodic rational cohomology and positive Euler characteristic. By Proposition \ref{pro:model4periodic}, $M$ is a rational $\s^2 \times \HP^n$. We are also given a non-primitive cohomogeneity one action with diagram $H \subseteq K^\pm \subseteq G$, and a subgroup $L$ such that $K^\pm \subseteq L \subsetneq G$. 

By Propositions \ref{pro:bothsingnonorientable}, \ref{pro:onesingnonorientable}, and \ref{pro:s1s1}, we may assume that both singular orbits are orientable and that the codimension $k_- \geq 3$. In particular, $G/K^+$ is simply connected and $K^+$ is connected. As in the $\QP^n_k$ case, it follows that $L$ is connected and that there exists a fibration $M \to G/L$ with connected fiber $M_L$, which is the cohomogeneity one manifold with group diagram $H \subseteq K^\pm \subseteq L$, and that further, $M = G \times_L M_L$.

\begin{lemma}  
Suppose $F\rightarrow M\rightarrow B$ is a fiber bundle with connected fiber $F$ and one-connected base $B$. If $M\simeq_\Q \s^2\times \HP^n$, then either $(F,B)\simeq_\Q (\s^2 \times \HP^k, \QP_{4(k+1)}^{\frac{n-k}{k+1}})$ or $(F,B)\simeq_\Q (\HP^k, \s^2 \times \QP_{4(k+1)}^{\frac{n-k}{k+1}})$.
\end{lemma}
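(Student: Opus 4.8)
The plan is to run the Serre spectral sequence of $F\to M\to B$, extract a free‑module structure on $H^*(M;\Q)$, read off $H^*(F;\Q)$ and $H^*(B;\Q)$ from it, and then invoke intrinsic formality of the resulting cohomology rings. Throughout I use that $F$ and $B$ are rationally elliptic; this is the situation in our application, where $F=M_L$ is a cohomogeneity one manifold (hence rationally elliptic by Grove--Halperin \cite{GroveHalperin87}) and $B=G/L$ is a homogeneous space. Since $\chi(F)\,\chi(B)=\chi(M)=\chi(\s^2\times\HP^n)=2(n+1)>0$ and rationally elliptic spaces have non‑negative Euler characteristic, both $F$ and $B$ are positively elliptic; in particular $H^*(F;\Q)$ and $H^*(B;\Q)$ are complete intersection algebras concentrated in even degrees. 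As $B$ is simply connected, the Serre spectral sequence has $E_2^{p,q}=H^p(B;\Q)\otimes H^q(F;\Q)$ supported in even total degree, so every differential (which raises total degree by one) vanishes and the sequence collapses at $E_2$.

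By Leray--Hirsch, $R:=H^*(M;\Q)=\Q[a,b]/(a^2,b^{n+1})$, with $\deg a=2$ and $\deg b=4$, is then free as a module over the subalgebra $S:=\pi^*H^*(B;\Q)$; moreover $\pi^*\colon H^*(B;\Q)\to S$ is an isomorphism (as $F$ is connected), and $i^*$ induces an isomorphism $R/S^+R\cong H^*(F;\Q)$. First I would pin down $H^*(F;\Q)$: it is generated by $\bar a:=i^*a$ and $\bar b:=i^*b$ with $\bar a^2=i^*(a^2)=0$, so $\bar b$ is either $0$ or indecomposable in degree $4$; using the complete intersection property together with the absence of a monomial in $\bar b$ of degree $\equiv 2\pmod 4$, one finds $H^*(F;\Q)$ must be $\Q[\bar a,\bar b]/(\bar a^2,\bar b^{k+1})$, or $\Q[\bar b]/(\bar b^{k+1})$, or $\Q[\bar a]/(\bar a^2)$ for some $0\le k\le n$ --- equivalently the rational cohomology ring of $\s^2\times\HP^k$ or of $\HP^k$, once one absorbs $\s^2=\s^2\times\HP^0$ and a point $=\HP^0$. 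Intrinsic formality of these rings then gives $F\simeq_\Q\s^2\times\HP^k$ or $F\simeq_\Q\HP^k$.

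Next I would recover $S=H^*(B;\Q)$ from the Poincaré series identity $P_R(t)=P_S(t)\,P_{H^*(F;\Q)}(t)$, which holds because $R$ is $S$‑free with a homogeneous basis mapping to a basis of $R/S^+R\cong H^*(F;\Q)$. If $H^*(F;\Q)=\Q[\bar a,\bar b]/(\bar a^2,\bar b^{k+1})$, then $P_S(t)=(1-t^{4(n+1)})/(1-t^{4(k+1)})$, which is a polynomial only when $k+1\mid n+1$; in that case it equals the Poincaré series of $\Q[y]/(y^{(n+1)/(k+1)})$ with $\deg y=4(k+1)$, and since the graded pieces of $R$ are at most one‑dimensional this forces $b^{k+1}\in S$, hence $S=\Q[b^{k+1}]/(b^{n+1})$ and $B\simeq_\Q\QP^{(n+1)/(k+1)-1}_{4(k+1)}=\QP^{\frac{n-k}{k+1}}_{4(k+1)}$. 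If instead $H^*(F;\Q)=\Q[\bar b]/(\bar b^{k+1})$, then $P_S(t)=(1+t^2)(1-t^{4(n+1)})/(1-t^{4(k+1)})$, again a polynomial only when $k+1\mid n+1$, which forces $a\in S$ and $b^{k+1}\in S$, so $S=\Q[a,b^{k+1}]/(a^2,b^{n+1})$ and $B\simeq_\Q\s^2\times\QP^{\frac{n-k}{k+1}}_{4(k+1)}$ (the cases $\bar b=0$ and $H^*(F;\Q)=\Q$ are the boundary instances $k=0$). Applying intrinsic formality once more --- to the $\QP^m_j$'s via Proposition \ref{pro:modelQP} and to their products with $\s^2$ --- turns these ring isomorphisms into rational homotopy equivalences, yielding the two alternatives.

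The step I expect to be the main obstacle is the passage from ``$R$ is $S$‑free with quotient $H^*(F;\Q)$'' to the explicit identification of $S$: a priori there could be graded subalgebras $S\subseteq R$ realizing the freeness without being of the expected form, and the argument above rules these out only by combining the forced Poincaré series $P_S=P_R/P_{H^*(F;\Q)}$ (a polynomial precisely for the stated divisibility) with the one‑dimensionality of the graded pieces of $R$ and the complete intersection property of $H^*(F;\Q)$ (which is what forces $H^*(F;\Q)$ into one of three shapes in the first place). A secondary point needing care is that $F$ need not be simply connected, only nilpotent, so the upgrade from cohomology ring isomorphisms to rational homotopy equivalences must go through the nilpotence of $F$ (which holds since $F=M_L$ is a cohomogeneity one manifold).
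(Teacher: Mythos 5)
Your proof is correct, but it takes a genuinely different route from the paper's. The paper argues entirely with rational homotopy groups: Corollary \ref{fbundle} gives surjectivity of $\pi_{odd}^\Q(M)\rightarrow\pi_{odd}^\Q(B)$ and injectivity of $\pi_{even}^\Q(F)\rightarrow\pi_{even}^\Q(M)$, so the odd-degree rational homotopy of $B$ lies in a subset of $\{3,4n+3\}$, and a two-case analysis on whether $\pi_3^\Q(B)\neq 0$, using positivity of $\chi(F)$ and $\chi(B)$, determines $\pi_*^\Q(F)$ and $\pi_*^\Q(B)$ and hence the two alternatives. You instead argue cohomologically: collapse of the Serre spectral sequence (both rings being evenly graded), Leray--Hirsch freeness of $H^*(M;\Q)$ over $\pi^*H^*(B;\Q)$, and the Poincar\'e series identity combined with the one-dimensionality of the graded pieces of $H^*(\s^2\times\HP^n;\Q)$ to locate $\pi^*H^*(B;\Q)$ explicitly. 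Your version makes the divisibility $k+1\mid n+1$ and the ring identifications explicit, at the cost of two steps that need a bit more care than you give them: (i) the reduction of $H^*(F;\Q)$ to the three stated shapes is best finished by observing that modulo $\bar a^2$ every homogeneous element is a multiple of some $\bar b^m$ or $\bar a\bar b^l$, so the relation ideal becomes monomial, and rational Poincar\'e duality (or the complete intersection property) of the positively elliptic space $F$ rules out $l<m$; and (ii) rather than invoking intrinsic formality of $H^*(\s^2\times\HP^k;\Q)$ as a black box, use the ellipticity you already have: a positively elliptic space carries a pure model $(\Lambda(x_i,u_j),\,du_j\in\Q[x_i])$ with $H^*\cong\Q[x_i]/(du_j)$, and for each ring occurring here the regular sequence $(du_j)$ is forced up to a change of basis, which yields the asserted rational equivalences (Proposition \ref{pro:modelQP} covers the single-generator factors); the nilpotence concern you flag is handled the same way, since $H^1(F;\Q)=0$ forces $\pi_1^\Q(F)=0$. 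The paper's argument is shorter and dovetails with the later non-primitive analysis, which is phrased in terms of $\pi_*^\Q(G/L)$ and $\pi_*^\Q(M_L)$; note also that the paper deduces ellipticity of $F$ and $B$ from \cite{Halperin78} rather than from the application context, so its lemma stands on its own, whereas your setup quietly assumes the cohomogeneity one situation.
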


\begin{proof}
According to \cite{Halperin78}, $F$ and $B$ must be rationally elliptic.  From Corollary \ref{fbundle}, the map $\pi_{odd}^\Q(M)\rightarrow \pi_{odd}^\Q(B)$ is surjective.  Thus, the non-trivial odd dimensional rational homotopy groups of $B$ are in dimensions a subset of $\{3,4n+3\}$.  The subset must be non-empty because $F$ is rationally elliptic.  Assume initially that $\pi_3(B)\neq 0$.  Since $B$ is rationally elliptic with positive Euler characteristic, this implies $\pi_2(B)\neq 0$.  Because $F$ has positive Euler characteristic, $\pi_1^\Q(F) = 0$, so $\pi_2^\Q(M)\rightarrow \pi_2^\Q(B)$ is an isomorphism.

Now, since $\pi_{even}^\Q(F)$ injects into $\pi_{even}(M)$, $\pi_4^\Q(F)$ is the only non-trivial even degree rational homotopy group.  Since $F$ has positive Euler characteristic, $\pi_{4k+3}^\Q(F)\neq 0$ for precisely one $k\geq 1$.  It then follows in this case that $F$ is a rational $\HP^k$, from which is easily follows that $B$ is a rational $\s^2 \times \QP_{4(k+1)}^{\frac{n-k}{k+1}}$.

Next, assume $\pi_3^\Q(B) = 0$.  This implies $\pi_3(F) = \Q$, so $\pi_2^\Q(F) = \Q$ as well.  If $\pi_4(F)= 0$, then $F$ is rationally $\s^2$ so $B$ is rationally $\HP^n$.  Thus, assume $\pi_4(F) \neq 0$.  Since we know $\pi_{4n+3}^\Q(B) = \Q$ and that, apart from a single even dimension group, all other rational homotopy groups of $B$ vanish, it easily follows that
	\[(F,B)\simeq_\Q (\s^2 \times \HP^k, \QP_{4(k+1)}^{\frac{n-k}{k+1}}).\]
\end{proof}

Now, if the base $B = G/L$ is rationally $\QP_{4(k+1)}^{\frac{n-k}{k+1}}$, then $\dim B = 4(n-k) > 6$ for otherwise, $k+1 = n$, which contradicts the fact that $k+1|n-k$.  From Kapovitch-Ziller \cite{KapovitchZiller04}, it follows that $G/L$ is a standard even dimensional sphere or $G/L = \OP^2$.  Note also that if $G/L$ is a sphere, then $\dim B \geq \dim F + 2 = 4(k+1)$ because $k+1|n-k$.  Similarly, if $G/L  \cong \OP^2$, then $k = 1$, so $\dim F = 6$.  Thus, by Lemma \ref{lem:SmallFiber}, $M$ is diffeomorphic to $F\times B$ in this case.  In particular, $\dim \pi_{odd}^\Q(M) = 3$, a contradiction.

When the base is $\s^2\times \QP_{4(k+1)}^{\frac{n-k}{k+1}}$ with $n > k$, then it is still true that $(4(k+1), \frac{n-k}{k+1})$ is standard.  Indeed, $G$ cannot be simple by \cite{DeVito18}, so $G = G_1\times G_2$ and $L = L_1\times L_2$ with $L_i\subseteq G_i$ of full rank.  Without loss of generality, we assume $\pi_3^\Q(G_1/L_1)\neq 0$.  Since $\chi(G_1/L_1) > 0$, it follows that $G_1/L_1$ is rationally $\s^2$.  Now, by inspecting the rational homotopy groups of $G_2/L_2$, it follows that $G_2/L_2$ is a $\QP_{4(k+1)}^{\frac{n-k}{k+1}}$, so is standard by Kapovitch-Ziller.  In other words, $G_2/L_2$ is either a sphere $\s^{2n+2}$, $\HP^n$, $\Gtwo/\SO(4)$, or $\OP^2$.  Then the dimension of the fiber is $2n ,2,2, 6$ respectively.

For the first case, $G_2/L_2 = \s^{2n+2}$ with $k+1 = n-k$, implying $n$ is odd.  If $n = 1$, then $M$ is a linear $\s^2$-bundle over $\s^4$.  Non-trivial bundles over this form are rationally $\mathbb{C}P^3$, so this bundle must be trivial: $M$ is diffeomorphic to $\s^2\times \s^4$ in this case.  Thus, we may assume $n\geq 3$ is odd, so Lemma \ref{lem:SmallFiber} applies, giving the contradiction $\dim \pi_{odd}^\Q(M)\geq 3$.  Similarly, we achieve the same contradiction in the last case where $G_2/L_2\cong \OP^2$.

For the middle two cases, since $G_2/L_2$ is a rational $\HP^m$, it follows that the fiber is $\s^2$.  Now, $L_2$ splits as $\SU(2)\times L_2 '$ with $L_2'$ simple, so the $L_2$ action on the fiber $\s^2$ must be trivial because the only cohomogeneity one action on $\s^2$ is the standard circle action.  In particular, $\dim \pi_{odd}^\Q(M)\geq 3$ in this case as well, again giving a contradiction.

For the last case, we have $n = k$, so $F = M_L$ is a rational $\HP^n$ and $B = \s^2$.  From Iwata \cite{Iwata78}, we know that the $L$ action on $M_L$ is isometric with respect to the usual metric.  By clutching function arguments, such bundles over $\s^2$ are classified by $\pi_1(\Isom(M_L))$, the fundamental group of the isometry group of $M_L$.  When $M_L = \Gtwo/\SO(4)$, $\pi_1(\Isom(M_L)) = \{1\}$, so $M$ is diffeomorphic to $\s^2 \times (\Gtwo/\SO(4))$.  When $M_L = \HP^m$, $\pi_1(\Isom(M_L)) = \Z/2\Z$, so there precisely two such $M$ up to bundle equivalence.  In fact, as shown in \cite{DeVito18}, the total spaces of the two $\HP^m$ bundles over $\s^2$ have different Stiefel-Whitney classes, so are not even homotopy equivalent.

This concludes the proof of Theorem \ref{thm:4periodic} in the non-primitive case. We finish this section by noting that the three examples in the conclusion of the theorem admit non-primitive cohomogeneity one actions. Indeed,  $\s^2\times \HP^n$ and $\s^2\times (\Gtwo/\SO(4))$ admit obvious non-primitive cohomogeneity one actions via a cohomogeneity one action on one factor and a transitive action on the other. 

\begin{proposition}\label{pro:nonprimitiveaction}
The unique non-trivial $\HP^n$-bundle over $\s^2$ admits a non-primitive cohomogeneity one action.
\end{proposition}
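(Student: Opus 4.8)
The plan is to exhibit an explicit group diagram $H \subseteq K^\pm \subseteq G$ whose associated cohomogeneity one manifold is the non-trivial $\HP^n$-bundle over $\s^2$, and whose bottom group $L = K^+ \cdot K^-$ is a proper subgroup of $G$, so that the action is non-primitive. The natural model for $\HP^n$ is the symmetric space $\Sp(n+1)/\Sp(n)\times\Sp(1)$, so I would take $G' = \Sp(n+1)$ and $L = \Sp(n) \times \Sp(1) \times \sone$ (or rather a suitable almost-direct factor of $G$), with $M_L = \HP^n$ itself carrying the transitive $L$-action, so that $M = G \times_L \HP^n$ is an $\HP^n$-bundle over $G/L$. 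To get base $\s^2$ we need $G/L \cong \s^2$; the cleanest way is to build the $\s^2$ out of a cohomogeneity one action on the other factor. Concretely, I expect the right diagram to be of the form
	\[
	H = \Sp(n)\times\Delta(\sone) \subseteq K^\pm = \Sp(n)\times\Sp(1)\times\sone \ \text{or}\ \Sp(n)\times\sone\times\sone \subseteq G = \Sp(n+1)\times\sone,
	\]
chosen so that one of the sphere conditions $K^\pm/H = \s^{k_\pm - 1}$ forces the relevant $\sone$ factors to recombine nontrivially; this is exactly the kind of diagram appearing in the proof of Proposition \ref{pro:s1s1}, but with the circle sitting inside $\Sp(1)$ rather than split off as a direct factor.

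The key steps, in order: (1) write down the candidate diagram and verify it is a legitimate cohomogeneity one group diagram, i.e., that $K^+/H$ and $K^-/H$ are spheres (here both of dimension $1$, since the base is $\s^2$ and the fiber $\HP^n$ has no boundary contribution), using Theorem \ref{thm:transsphere}; (2) identify $L$ with the subgroup generated by $K^+$ and $K^-$ inside $G$, check $L \subsetneq G$ to conclude non-primitivity, and verify $G/L \cong \s^2$ and $M_L = L/H \cdots$ gives $M_L \cong \HP^n$ with its standard $L$-action; (3) compute the clutching data of the resulting $\HP^n$-bundle $M = G\times_L \HP^n \to \s^2$ and check it is the non-trivial one. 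For step (3) I would use the same classification principle invoked in the non-primitive proof above: $\HP^n$-bundles over $\s^2$ coming from isometric actions are classified by $\pi_1(\Isom(\HP^n)) = \Z/2\Z$, and the bundle is trivial iff the clutching loop is null-homotopic in $\Isom(\HP^n) = \PU(n+1,\HH)$ (or $\Sp(n+1)/\Z_2$-type group); so I must check that the loop $t \mapsto (\text{the } \sone\text{-action on } \HP^n)$ coming from our embedded circle represents the nontrivial class. Since the circle we use is the one sitting diagonally in $\Sp(1)\subseteq \Sp(n)\times\Sp(1)$ acting on $\HP^n$, its induced loop in $\Isom(\HP^n)$ should be precisely the generator — this is the computation to pin down carefully.

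The main obstacle I anticipate is step (3): getting the clutching function exactly right and proving it lands in the nontrivial component of $\pi_1$, rather than in the trivial one (which would just reproduce $\s^2\times\HP^n$). This requires being careful about which circle subgroup of $\Sp(n+1)$ one uses and how it projects to the isometry group of the fiber; a convenient check is to compare Stiefel–Whitney classes of the total space against those of $\s^2\times\HP^n$, exactly as cited from \cite{DeVito18} in the preceding discussion, so that one need not track the clutching loop through $\pi_1$ directly but can instead verify non-triviality cohomologically. A secondary, more routine obstacle is verifying that the action is \emph{minimal} in Frank's sense (no normal subgroup of $G$ acts orbit-equivalently) and almost effective, which should follow by inspection of the diagram once it is written down. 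Finally, one should double-check that this example is genuinely non-primitive and not secretly primitive under a different presentation, but that is immediate once $L$ is exhibited.
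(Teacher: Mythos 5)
There is a genuine gap, and it lies at the heart of the construction rather than in the verification step. In a non-primitive action the cohomogeneity one structure lives entirely in the fiber: $M_L$ is the cohomogeneity one $L$-manifold with diagram $H\subseteq K^\pm\subseteq L$, while the base $G/L$ is a homogeneous $G$-space. Your plan takes the opposite division of labor: you want $L=\Sp(n)\times\Sp(1)\times\sone$ to act \emph{transitively} on the fiber $\HP^n$ and to ``build the $\s^2$ out of a cohomogeneity one action on the other factor.'' But if $L$ acts transitively on $M_L$, say $M_L=L/H$, then $M=G\times_L (L/H)\cong G/H$ is homogeneous, and the non-trivial $\HP^n$-bundle over $\s^2$ is not homogeneous (this is exactly the point emphasized after Corollary \ref{cor:4periodic}); also $G/L$ would be $\HP^n$, not $\s^2$, for your choice of $L\subseteq\Sp(n+1)\times\sone$. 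The concrete diagram you write down confirms the problem: with $G=\Sp(n+1)\times\sone$, $H=\Sp(n)\times\Delta\sone$ and $K^\pm$ as proposed, the sphere conditions give $K^+/H=\s^3$, $K^-/H=\s^1$, and the resulting manifold has dimension $\dim G/H+1=4n+4$ with singular orbits $\HP^n$ and $\CP^{2n+1}$ --- this is the standard cohomogeneity one picture of $\HP^{n+1}$, not the $(4n+2)$-dimensional bundle you are after.

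The missing idea is that one needs a \emph{cohomogeneity one} (not transitive) action on the fiber $\HP^n$ by a group containing a circle that can be used to twist the bundle. The paper does this by starting from Iwata's cohomogeneity one action of $\SU(n+1)$ on $\HP^n=\s^{4n+3}/\Sp(1)$ and extending it to $L=\SU(n+1)\times\sone$ via $(A,e^{i\theta})\ast[v]=[e^{i\theta/2}Av]$; this is still cohomogeneity one and not transitive, and then $G=\SU(n+1)\times\SU(2)\supseteq L$ gives the non-primitive manifold $M=\HP^n\times_{\sone}\SU(2)$, identified with the non-trivial $\HP^n$-bundle over $\s^2$ by \cite[Proposition 4.4]{DeVito18}. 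Note that $\Sp(n+1)$ cannot play this role: it is transitive on $\HP^n$, and left multiplication by the complex scalar $e^{i\theta/2}$ commutes only with $\Un(n+1)$, not with quaternionic matrices, so there is no analogous circle extension of the full $\Sp(n+1)$. Your step (3) --- detecting non-triviality via $\pi_1(\Isom(\HP^n))\cong\Z/2\Z$ or Stiefel--Whitney classes, citing \cite{DeVito18} --- is the right mechanism and matches what the paper invokes, but it cannot rescue steps (1)--(2); the group diagram itself must be rebuilt around a cohomogeneity one fiber action as above.
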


\begin{proof}  
As shown by Iwata \cite{Iwata78}, the natural action of $\SU(n+1)$ on $\HP^n$ is cohomogeneity one.  We identify $\HP^n$ with the quotient of $\s^{4n+3}$ obtained by identifying ${v}\in \mathbb{H}^{n+1}$ with ${v} p$ for any unit quaternion $p$.  Consider the action of $L = \SU(n+1)\times \sone$ on $\HP^n$ given by $(A, e^{i\theta})\ast [v] = \left[e^{i\theta/2} A v\right]$.  Since this action extends a cohomogeneity one action, it has cohomogeneity at most one.  On the other hand, it is easy to see that this action cannot move the point $[1:i:0:...:0]$ to $[1:j:0:...:0]$, so is not transitive.

Consider the natural inclusion $L\subseteq \SU(n+1)\times \SU(2)$ and the cohomogeneity one $G$-manifold $M_L \times_{L} G = M_L \times_{\SU(n+1)\times \sone} (\SU(n+1)\times \SU(2)) \cong M_L\times_{\sone}  \SU(2)$.  This quotient of $\HP^n\times \s^3$ by $\sone$ appears in \cite[Proposition 4.4]{DeVito18} and is shown there to be the total space of the non-trivial $\HP^n$ bundle over $\s^2$.
\end{proof}

\section{Generalities on primitive actions}\label{sec:primitive}

In this section, we collect some results on primitive cohomogeneity one actions that will be used in the proofs of Theorems \ref{thm:QP} and \ref{thm:4periodic}. The proofs rest on a much wider classification due to Frank \cite{Frank13} in the case where the $G$-action is primitive.

\begin{theorem}[Frank]\label{thm:Frank} 
Suppose $M$ is a minimal and primitive cohomogeneity one manifold with group diagram $H \subseteq K^\pm \subseteq G$. If $M$ has positive Euler characteristic and the codimensions $k_\pm$ of the singular orbits satisfy $k_+ \equiv 0 \bmod{2}$ and $k_- \geq 3$, then at least one of the following applies.
\begin{enumerate}
\item\label{Frank1}  $M$ is diffeomorphic to a compact rank one symmetric space, a Grassmannian, $\Sp(n)/(\Sp(n-k+1)\Un(k))$, or $\SO(n+1)/(\SO(n-2k+1)\Un(k))$. Moreover, the action is linear, up to equivalence.

\item\label{Frank2}  $G = G_1\times \SU(2)$ some some simple group $G_1$.   

\item\label{Frank3} $G$ is an exceptional Lie group.

\item\label{Frank4} $M$ has a diagram with $G$ and $H$ listed in Table \ref{table:FrankAppendix}
\end{enumerate}
\end{theorem}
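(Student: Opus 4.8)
Since this theorem is due to Frank \cite{Frank13}, one could simply cite it; what follows sketches how the classification is obtained. The plan is to read everything off the group diagram $H \subseteq K^\pm \subseteq G$, using the rank restrictions of Section \ref{sec:poseul}, the list of transitive sphere actions (Theorem \ref{thm:transsphere}), and the primitivity hypothesis. Since the action is minimal, $G = G'\times \gT^m$ with $G'$ connected and simply connected, and minimality rules out redundant factors in $G$. Because $k_+$ is even, Equation \eqref{eqn:ranks} forces $\rank K^+ = \rank G$, so $K^+$ is a connected maximal-rank subgroup and $K^+/H \cong \s^{k_+-1}$ is an odd-dimensional sphere, i.e.\ $H$ has corank one in $K^+$; similarly $K^-/H \cong \s^{k_--1}$ with $H$ of corank zero or one in $K^-$. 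Since $k_- \geq 3$, Lemma \ref{lem:cohomtop} makes $K^+$ connected and $G/K^+$ simply connected, and in the main cases ($k_+ \geq 3$ as well) $G$, $H$, and $K^-$ are all connected, so the analysis takes place among connected groups.

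First I would enumerate, for each side separately, the pairs $(K^\pm, H)$ allowed by Theorem \ref{thm:transsphere} together with the corank constraint: each side is one of a short menu of ``local models'' --- a tower of $\SU$'s, $\Un$'s, or $\Sp$'s with a possible $\sone$ or $\Sp(1)$ correction, or a sporadic case built from $\Gtwo \supseteq \SU(3)$, $\Spin(7) \supseteq \Gtwo$, or $\Spin(9) \supseteq \Spin(7)$. The constraint binding the two sides together is that $K^+$ and $K^-$ sit inside one group $G$, share the common subgroup $H$, and --- by primitivity --- generate all of $G$. Decompose $G' = G_1 \times \cdots \times G_r$ into simple factors. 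Since $\rank K^+ = \rank G$, $K^+$ projects with full rank onto every $G_i$ (and onto $\gT^m$), $K^-$ projects onto each $G_i$ with corank at most one, and since $\rank H = \rank G - 1$, the subgroup $H$ projects with full rank onto all but at most one $G_i$. The core of the proof is then to run the menu entries against this rigid factorization: one shows that unless $G$ is simple, or $G = G_1 \times \SU(2)$, or $G$ is exceptional, the subgroups $K^+$ and $K^-$ are jointly contained in a proper subgroup $L \subsetneq G$ --- typically because on each factor their projections lie in a common maximal subgroup --- contradicting primitivity; the residual configurations that escape this argument are collected into Table \ref{table:FrankAppendix} (Case \ref{Frank4}). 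When $G$ is simple, matching the two maximal-rank subgroups $K^\pm$ (each determined up to $\Un(k)$-extensions by its Borel--de Siebenthal type) against the requirement that $K^\pm/H$ be spheres pins the pair down to a symmetric pair or one of the near-symmetric families $\Sp(n)/(\Sp(n-k+1)\Un(k))$ and $\SO(n+1)/(\SO(n-2k+1)\Un(k))$, and comparing with the corresponding linear model gives Case \ref{Frank1}. The possibilities left over are $G = G_1 \times \SU(2)$ (Case \ref{Frank2}) and $G$ exceptional (Case \ref{Frank3}).

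The main obstacle is precisely this bookkeeping: the number of a priori compatible configurations of $(H, K^+, K^-)$ inside $G$ is large, and organizing the case distinction so that none is overlooked --- while correctly sorting each surviving configuration into ``violates primitivity,'' ``assembles into a known linear model,'' or ``genuinely new'' --- is where essentially all the effort lies. A secondary technical point is upgrading the conclusion of Case \ref{Frank1} from ``has the cohomology of a linear model'' to ``equivariantly diffeomorphic to it via a linear action''; for that I would invoke the established equivariant rigidity of cohomogeneity one actions on compact rank one symmetric spaces and the relevant Grassmannians (Uchida \cite{Uchida77}, Iwata \cite{Iwata78,Iwata81}, and Hoelscher's low-dimensional classification \cite{Hoelscher10}) rather than reproving it.
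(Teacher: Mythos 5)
Your proposal is correct and takes essentially the same approach as the paper: the paper does not prove this statement but imports it directly as Frank's classification \cite{Frank13}, which is exactly what you propose, and your sketch of how Frank's argument runs (rank constraints, transitive sphere actions, primitivity versus containment in a proper subgroup, Borel--de Siebenthal analysis in the simple case) is a faithful outline of that cited work.
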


\begin{center}
\begin{table}[h]
\renewcommand*{\arraystretch}{1.5}
\begin{tabular}{|c|c|c|c|c|}
\hline
$G$ &  $H$ & $\dim M$ & $\chi(M)$ & $\frac{\dim(M)}{\chi(M)-1}$\\
\hline
\hline
$\SU(3)$ &  $\sone$ & $8$ & $3$ & $4$\\
$\SU(3)$ &  $\sone$ or  $\mathbb{Z}_3 \SO(2)$ & $8$ & $6$ & $\frac{8}{5}$\\
%$\SU(3)$ &  $\mathbb{Z}_3 \SO(2) $ & $8$ & $6$ & $\frac{8}{5}$\\
$\SU(4)$ &  $\sone \SU(2)$ & $12$ & $10$ & $\frac{4}{3}$\\
\hline
$\SU(n), n\geq 4$ & $\sone \SU(n-2)$ & $ 4n-4$  & $2n$ & $\frac{4n-4}{2n-1}$\\
\hline
$\SO(7)$ or $\Spin(7)$ & $\SU(3)$ & $14$ & $8$ & $\mathbf{2}$\\
%$\Spin(7)$ &  $\SU(3)$ & $14$ & $8$ & $2$\\
$\Spin(7)$ & $\sone \SU(2)$ & $ 18$ & $14$ & $ \frac{18}{13}$\\
$\Spin(7)$ & $\SU(3)$ & $ {14}$ & ${2}$ & $\mathbf{14}$\\
$\SO(9)$ & $\sone \SU(3)$ & $28 $ & $16$ & $ \frac{28}{15} $\\
$\Spin(9)$ & $\sone \SU(2)^2$ & $ 30 $ & $48$ & $\frac{30}{47}$\\
\hline
$\SO(2n+1), n\geq 3$ & $\sone\SO(2n-3)$ & $8n-6$ & $2n(n+1)$ & $ \frac{8n-6}{2n^2 + 2n -1}$\\
$\SO(2n+1), n\geq 3$ & $\gT^2 \SU(n-2)$ & $n^2 +5n-4$ & $n2^n$ & $\frac{n^2 + 5n-4}{n2^n -1}$\\
\hline
$\Sp(2)$ & $\mathbb{Z}_2 \Sp(1)$ & $8$ & $8$ & $\frac{8}{7}$\\
\hline
$\Sp(n), n\geq 2$ & $\Sp(n-2)\Sp(1)$ & $8n-8$ & $n(2n-1)$ & $\frac{8n-8}{2n^2 - n - 1}$\\
$\Sp(n), n\geq 2$ & $\sone \Sp(n-2)$ & $8n-6$ & $2n^2$ & $\frac{8n-6}{2n^2 - 1}$\\
$\Sp(n), n\geq 2$ & $\sone \Sp(n-2)$ & $8n-6$ & $2n$ & $\frac{8n-6}{2n-1}$\\
$\Sp(n), n\geq 2$ & $\gT^2 \Sp(n-3)$ & $12n-6$ & $4n(n-1)$ & $\frac{12n-6}{4n^2 -4n -1}$\\
\hline
$\SO(8)$ & $\SU(4)$ & $14$ & $8$ & $\mathbf{2}$\\
$\SO(10)$ & $\sone \SU(4)$ & $30$ & $16$ & $\mathbf{2}$\\
\hline
$\SO(2n), n\geq 4$ & $\sone \SU(n-1)$ & $n^2 + n$ & $2^{n+1}$ & $ \frac{n^2 + n}{2^{n+1}-1}$\\
$\SO(2n), n\geq 4$ & $\gT^2 \SU(n-2)$ & $n^2 + 3n-4$ & $n2^{n-1}$ & $\frac{n^2 + 3n-1}{n2^{n-1} - 1}$\\
\hline
\end{tabular}\caption{Primitive cohomogeneity one manifolds with positive Euler characteristic}\label{table:FrankAppendix}
\end{table}
\end{center}

If Case 1 occurs in Theorem \ref{thm:Frank}, then Theorems \ref{thm:QP} and \ref{thm:4periodic} follow immediately from previous classifications (see Sections \ref{sec:QPcase1} and \ref{sec:4periodiccase1}). In the remaining cases of Frank's theorem, $G$ is simple or a product of a simple group and $\SU(2)$.  Since subgroups of compact Lie groups can have at most finitely many components, we  have the following.

\begin{lemma}\label{lem:pi1finite} 
Suppose $M$ is a simply connected manifold with $\chi(M) > 0$ and that $M$ does not arise in Case 1 of Theorem \ref{thm:Frank}.  For any primitive cohomogeneity one action by a compact Lie group $G$, the regular orbit $G/H$ has finite fundamental group.
\end{lemma}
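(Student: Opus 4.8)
The plan is to derive the finiteness of $\pi_1(G/H)$ from the structural constraints imposed by Theorem~\ref{thm:Frank} together with the rank inequalities from Section~\ref{sec:poseul}. First I would recall from Lemma~\ref{lem:cohomtop} that, under the standing hypotheses $k_+$ even and $k_- \geq 3$, if both $k_\pm \geq 3$ then $G/H$ and $G$ are simply connected and $H$ is connected, so $\pi_1(G/H)$ is trivially finite; thus we may assume $k_+ = 2$. In that case $K^+/H$ is a $0$-sphere, so $H$ has index two in $K^+$; in particular $H_0 = K^+_0$ and $H$ may be disconnected, but since $H$ is a closed subgroup of the compact Lie group $G$ it has only finitely many components. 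This is the content of the final sentence of the excerpt: the point is that $\pi_1(G/H)$ surjects onto $\pi_0(H)$ when $G$ is simply connected, so the only way $\pi_1(G/H)$ can be infinite is if $\pi_1(G)$ itself is infinite, i.e., if $G$ has a torus factor.

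The key step, then, is to rule out a torus factor in $G$. Here I would invoke the hypothesis that $M$ does not arise in Case~1 of Theorem~\ref{thm:Frank}: in every one of the remaining cases (Cases~2, 3, and~4), the group $G$ is either simple, a product $G_1 \times \SU(2)$ of a simple group with $\SU(2)$, or an exceptional Lie group, or one of the explicit entries of Table~\ref{table:FrankAppendix} — and inspection of that table shows every $G$ there is semisimple (indeed $\SU(m)$, $\Spin(m)$, $\SO(m)$, $\Sp(m)$, or such a product). In all these cases $G$ is semisimple, hence has finite fundamental group, so the long exact homotopy sequence of the fibration $H \to G \to G/H$ gives that $\pi_1(G/H)$ fits in $\pi_1(G) \to \pi_1(G/H) \to \pi_0(H) \to \pi_0(G)$; with $\pi_1(G)$ finite and $\pi_0(H)$ finite (the latter because $H$ is a compact Lie group and so has finitely many components), we conclude $\pi_1(G/H)$ is finite.

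The main obstacle — really the only subtlety — is bookkeeping: one must verify that none of the groups $G$ appearing in Cases~2--4 of Frank's theorem has a circle or torus direct factor. This is immediate from the statement of Cases~2 and~3 and from a glance at the first column of Table~\ref{table:FrankAppendix}, where every group listed is of the form $\SU(n)$, $\Spin(n)$, $\SO(n)$, $\Sp(n)$, or $G_1 \times \SU(2)$; none carries a $\gT^m$ factor (note that the $\sone$ and $\gT^2$ symbols appearing there occur only inside the subgroup $H$, never in $G$). Since earlier in Section~\ref{sec:preliminaries} we arranged $G = G' \times \gT^m$ with $G'$ simply connected, the assertion is precisely that $m = 0$ in all the relevant cases, and then $G = G'$ is simply connected (or at worst semisimple), so $\pi_1(G)$ is finite and the homotopy exact sequence finishes the argument.
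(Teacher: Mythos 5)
Your proposal is correct and follows essentially the same route as the paper: the paper's justification is precisely that in Cases 2--4 of Frank's theorem $G$ is semisimple (simple, exceptional, a product of a simple group with $\SU(2)$, or one of the semisimple groups in Table \ref{table:FrankAppendix}), so $\pi_1(G)$ is finite, and since a closed subgroup $H$ of a compact Lie group has finitely many components, the exact sequence $\pi_1(G)\to\pi_1(G/H)\to\pi_0(H)$ forces $\pi_1(G/H)$ to be finite. Only a minor slip in your non-essential aside: when $k_+=2$ the space $K^+/H$ is a circle $\s^1$, not a $0$-sphere (exceptional orbits with $k_\pm=1$ are excluded by Lemma \ref{lem:cohomtop}), but this plays no role in your main argument.
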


The following further constrains what can occur in Case 2.

\begin{lemma}\label{lem:morecase2}
If $M$ is as in Case 2 of Frank's classification, then $k_- = 3$ and all of the following hold:
	\begin{enumerate}
	\item\label{lem:morecase2:K-} Up to cover, $K^- = K_1^-\times \Delta \SU(2)$ where $K_1^-\subseteq G_1$, $K^-_1$ has corank $1$ in $G_1$, and the $\Delta \SU(2)$ factor of $K^-$ is diagonally embedded in $G_1\times \SU(2)$. 
	\item The group $K^+$ has the form $K^+_1\times \sone$ with $K^+_1\subseteq G_1$ and $\sone\subseteq \SU(2)$.
	\item The group $H = K_1^-\times \Delta \sone$ for the natural $\Delta \sone\subseteq \Delta \SU(2)$.
	\item\label{lem:morecase2:bundle} In the bundle $\s^2\rightarrow G/H\rightarrow G/K^-$, the map $\pi_\ast(\s^2)\rightarrow \pi_\ast(G/H)$ is injective.
	\end{enumerate}
\end{lemma}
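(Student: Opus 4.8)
The plan is to exploit the product structure $G = G_1\times\SU(2)$ through the projections $p_1\colon G\to G_1$ and $p_2\colon G\to\SU(2)$, together with the subgroups $K_1^\pm := K^\pm\cap G_1$ and $H_1 := H\cap G_1$; these are the kernels of $p_2$ on $K^\pm$ and $H$, and $K_1^\pm\trianglelefteq K^\pm$ since $G_1\trianglelefteq G$. First I would record the group–theoretic constraints coming from Frank's hypotheses. A connected normal $N\trianglelefteq G$ acts orbit‑equivalently exactly when $NH=G$, and the only connected proper normal subgroups of $G$ are $G_1$ and $\SU(2)$; so minimality forces $p_1(H)\neq G_1$ and $p_2(H)\neq\SU(2)$. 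Since any subgroup $\Gamma\le G_1\times\SU(2)$ satisfies $\Gamma\le p_1(\Gamma)\times p_2(\Gamma)$, if either $\langle p_1(K^+),p_1(K^-)\rangle$ or $\langle p_2(K^+),p_2(K^-)\rangle$ were proper then $p_1(\langle K^+,K^-\rangle)\times p_2(\langle K^+,K^-\rangle)$ would be a proper subgroup of $G$ containing $K^\pm$, contradicting primitivity; hence $\langle p_i(K^+),p_i(K^-)\rangle$ is the full factor for $i=1,2$. Finally, $k_-\geq 3$ makes $K^+$ (hence $p_2(K^+)$) connected by Lemma~\ref{lem:cohomtop}(1), and assuming also $k_+\geq 4$ (the case $k_+=2$ is small and handled separately) all of $K^-,H$ are connected, so $p_2(K^\pm)\in\{1,\sone,\SU(2)\}$ and $p_2(H)\in\{1,\sone\}$.

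The main tool is the homogeneous fibration
\[ K_1^\pm/H_1 \longrightarrow K^\pm/H \longrightarrow K^\pm/(K_1^\pm H) = p_2(K^\pm)/p_2(H), \]
whose total space is the sphere $\s^{k_\pm-1}$ and whose base is $\sone$, $\s^2$, $\s^3$, or a point. I would then run through the possibilities for $(p_2(K^+),p_2(K^-),p_2(H))$, using that a sphere $\s^m$ with $m\geq 2$ admits no nontrivial fibration over $\sone$, fibers over $\s^2$ only as $\s^2\to\s^2$ or the Hopf bundle $\s^3\to\s^2$, fibers over $\s^3$ only as $\s^3\to\s^3$, and that any sphere which is a product $X\times\s^3$ equals $\s^3$. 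Combined with $\langle p_2(K^+),p_2(K^-)\rangle=\SU(2)$, the parity of $k_+$, and the rank bounds \eqref{eqn:ranks}, this leaves exactly $p_2(H)=\sone$, $p_2(K^-)=\SU(2)$, $p_2(K^+)=\sone$, with base $\SU(2)/\sone=\s^2$ accounting for all of $\s^{k_--1}$ and fibre $K_1^-/H_1$ a point; hence $K_1^-=H_1$, $K^-/H=\SU(2)/\sone=\s^2$, so $k_-=3$, and $\rank K^-=\rank H=\rank G-1$ makes $K_1^-$ corank one in $G_1$. This case analysis is the main obstacle: the degenerate configurations — $p_2(H)=1$ (forcing $H\subseteq G_1$ of full rank, $k_+=4$, $K^-\subseteq G_1$), an $\SU(2)$ mixing into $K^+$, or the $\SU(2)$ in $K^-$ degenerating to the direct factor $1\times\SU(2)$ — each have to be excluded, and I expect to do so by a uniform mechanism: in each case $1\times Z(\SU(2))$ (or $1\times\SU(2)$) lies in the ineffective kernel of a slice representation, so the associated principal $\SU(2)$- or $\SO(3)$-bundle over a singular orbit is trivial, $M$ becomes a sphere bundle over a homogeneous space $G_1/L$, and primitivity forces $L=G_1$, making $M$ a sphere and hence an instance of Case~1 of Theorem~\ref{thm:Frank}, which is excluded in the situation of the lemma.

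Granting $k_-=3$ and this data, parts (1)–(3) follow from elementary Lie theory. The extension $1\to K_1^-\to K^-\to\SU(2)\to 1$ gives $\fk^-=\fk_1^-\oplus\mathfrak{s}$ with $\mathfrak{s}\cong\fsu(2)$ (Levi), and since $\fk^-$ is compact the ideal $\fk_1^-$ has a complementary ideal, so we may take $[\fk_1^-,\mathfrak{s}]=0$; the connected subgroup $\widetilde{\SU(2)}\le K^-$ with algebra $\mathfrak{s}$ then commutes with the connected group $K_1^-$, so $K^-=K_1^-\cdot\widetilde{\SU(2)}$ with both factors normal and, up to a central $\Z_2$, $K^-=K_1^-\times\widetilde{\SU(2)}$. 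Since $p_2|_{K^-}$ is the quotient by $K_1^-$, it is an isomorphism on $\widetilde{\SU(2)}$; writing $\phi=(p_1|_{\widetilde{\SU(2)}})\circ(p_2|_{\widetilde{\SU(2)}})^{-1}$ we get $\widetilde{\SU(2)}=\{(\phi(q),q)\}=\Delta\SU(2)$, genuinely diagonal since $\phi\equiv 1$ is the degenerate case excluded above — this is part~(1), with $K_1^-=K^-\cap G_1$. As $K_1^-\times 1\subseteq H\subseteq K_1^-\times\Delta\SU(2)$, $p_2(H)=\sone$, and $\dim H=\dim K_1^-+1$, one gets $H=K_1^-\times\Delta\sone$ for the circle $\Delta\sone\le\Delta\SU(2)$ (part~(3)). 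For part~(2): $p_2(K^+)=\sone$ puts $K^+\le G_1\times\sone$, and as $\rank K^+=\rank(G_1\times\sone)$ the group $K^+$ contains the central circle $1\times\sone$ with $K^+/(1\times\sone)\cong p_1(K^+)=:K_1^+$, so $K^+$ is a central $\sone$-extension of $K_1^+$ and, up to a finite central quotient, equals $K_1^+\times\sone$.

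Finally, part~(4) is formal once parts (1)–(3) are in hand. One has $K^-/K_1^-\cong\SU(2)$ (because $K_1^-\cap\Delta\SU(2)=1$) and $G/K_1^-\cong(G_1/K_1^-)\times\SU(2)$ (because $K_1^-\subseteq G_1$), and under these identifications the fibre inclusion $\SU(2)=K^-/K_1^-\hookrightarrow G/K_1^-$ is $q\mapsto(\phi(q)K_1^-,q)$, which preserves the $\SU(2)$-coordinate and is thus injective on every homotopy group; hence the connecting homomorphism $\pi_\ast(G/K^-)\to\pi_{\ast-1}(K^-/K_1^-)$ of the bundle $K^-/K_1^-\to G/K_1^-\to G/K^-$ vanishes identically. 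Since $K_1^-\subseteq H\subseteq K^-$, the projection $K^-\to K^-/H$ factors through $K^-\to K^-/K_1^-$; together with the standard factorization of the connecting map of $K^-/H\to G/H\to G/K^-$ through $\pi_{\ast-1}(K^-)$ (map of fibrations to $K^-\to G\to G/K^-$), this forces the connecting homomorphism $\pi_\ast(G/K^-)\to\pi_{\ast-1}(K^-/H)$ to be zero. As $K^-/H=\Delta\SU(2)/\Delta\sone\cong\s^2$, exactness of the homotopy sequence of $\s^2\to G/H\to G/K^-$ yields the injectivity of $\pi_\ast(\s^2)\to\pi_\ast(G/H)$ asserted in part~(4).
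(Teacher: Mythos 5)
There is a genuine gap, and it sits exactly where the real content of the lemma lies. The paper does not re-derive the structural statements: it imports $k_-=3$ and parts (1) and (3) directly from Frank's analysis of the product case (\cite[Section 2]{Frank13}), and only proves (2) and (4). You instead attempt to prove the Frank structure from scratch, but the decisive case analysis is not carried out: you acknowledge it as ``the main obstacle'' and defer the exclusion of the degenerate configurations ($p_2(H)=1$, an $\SU(2)$ factor inside $K^+$, or $1\times\SU(2)\subseteq K^-$) to a mechanism you only ``expect'' to work. Worse, the final step of that mechanism is logically invalid: Frank's theorem says \emph{at least one} of the four cases applies, so the cases are not mutually exclusive, and Case 2 genuinely contains even-dimensional spheres with linear actions --- indeed the paper's own Lemma in Section \ref{sec:QPcase2} exhibits $\s^4$ with $\SU(2)\times\SU(2)$ and $\s^{4m+2}$ with $\Sp(m)\times\Sp(1)$ as Case 2 manifolds, and these satisfy the very structure the lemma asserts. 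So concluding ``$M$ is a sphere, hence an instance of Case 1, which is excluded'' cannot rule anything out; the degenerate configurations must be eliminated by the finer arguments in Frank's classification (or an honest substitute), not by this route. In addition, the claimed outcome of your fibration analysis is asserted rather than proved: for instance the configuration $K_1^-/H_1=\s^1$ with $K^-/H=\s^3$ fibering over $p_2(K^-)/p_2(H)=\s^2$ (so $k_-=4$, consistent with $\rank K^-=\rank G$) is not visibly excluded by the parity of $k_+$, the rank bounds \eqref{eqn:ranks}, or primitivity as you list them; and the case $k_+=2$, which you set aside as ``handled separately,'' is never handled, even though it actually occurs in Case 2 (see the first lemma of Section \ref{sec:4periodiccase2}, where $H$ is even disconnected).

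The parts you do complete are essentially fine. Your proof of (2) (full rank forces $1\times\sone\subseteq K^+$ once $p_2(K^+)=\sone$, whence $K^+=(K^+\cap G_1)\times\sone$) is close in spirit to the paper's, which instead splits $K^+=K_1^+\times K_2^+$ by full rank and kills $K_2^+=\SU(2)$ by the nonexistence of a bundle $\s^2\to\s^{k_+-1}\to K_1^+/K_1^-$ with $k_+-1$ odd. Your proof of (4), via naturality of connecting homomorphisms through $\pi_{*-1}(K^-)$ and the homotopy-injectivity of the fiber $K^-/K_1^-\hookrightarrow G/K_1^-\cong (G_1/K_1^-)\times\SU(2)$, is a valid (if more roundabout) alternative to the paper's one-line observation that the bundle $\s^2\to G/H\to G_1/K_1^-$ admits the section $g_1K_1^-\mapsto (g_1,e)H$. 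But since (2) and (4) are proved only after assuming the output of the unfinished case analysis, the proposal as written does not establish the lemma.
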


\begin{proof}  
The claim that $k_- = 3$ and the first and third statements follow immediately from Frank \cite[Section 2]{Frank13}.

For the second assertion, we begin by noting that since $K^+$ has full rank in $G = G_1\times \SU(2)$, it follows that $K^+ = K^+_1\times K^+_2$ where $K^+_1\subseteq G_1$ and $K^+_2\subseteq \SU(2)$ both have full rank.  Since $k_- > 2$, $K^+$ is connected, so $K^+_2 = \sone$ or $K^+_2 = \SU(2)$.  On the other hand, by first taking the quotient by the $K^-_1$ action, $\s^{k_+ - 1} = K^+/H = (K^+_1\times K^+_2)/(K^-_1\times \Delta\sone)$ fits into a bundle $K^+/H\rightarrow K^+_1/K^-_1$ with fiber $K^+_2/\sone$.  If $K^+_2 = \SU(2)$, this is a bundle of the form $\s^2\rightarrow \s^{k_+ - 1}\rightarrow K^+_1/K^-_1$.  Since $k_+-1$ is odd, there is no such bundle.  Thus, $K^+ = K^+_1\times \sone$, giving the first assertion.

For the fourth claim, since $K^- = K^-_1\times \Delta \SU(2)$, $G/K^-$ is canonically diffeomorphic to $G_1/K^-$.  The bundle $\s^2 = K^-/H\rightarrow G/H\rightarrow G_1/K^-_1$ has a section obtained by mapping $gK^-_1$ to $[g]\in G/H$, so the induced map $\pi_\ast(\s^2)\rightarrow \pi_\ast(G/H)$ is injective.
\end{proof}

The last two lemmas in this section will be applied in the proof of Theorem \ref{thm:QP}.

\section{Proof of Theorem \ref{thm:QP}}\label{sec:ProofQP}

In this section, we classify even-dimensional $\QP^n_k$ with cohomogeneity one structures. That is, we are given a simply connected, closed manifold $M$ of even dimension such that $H^*(M;\Q) \cong \Q[x]/(x^{n+1})$ where $x$ has even degree $k$. We are also given an almost effective cohomogeneity one action on $M$ by a connected group $G$ with group diagram $H \subseteq K^\pm \subseteq G$. Moreover, as explained in Section \ref{sec:preliminaries}, we may assume the $G$-action is minimal in the sense that no proper normal subgroup acts orbit equivalently. The aim is to prove that $M$ is equivariantly diffeomorphic to a compact rank one symmetric space (CROSS) equipped with a linear action. 

Part of this involves the classification when $n = 1$ (the even-dimensional rational sphere case). The other part requires showing that the parameters $n$ and $k$ are standard, i.e., that $k \in \{2,4\}$ or that $(k,n) = (8,2)$. The classification of Uchida and Iwata on cohomogeneity one rational cohomology CROSSes then implies the claimed equivariant diffeomorphism rigidity.

By what has been shown so far, after possibly swapping $K^+$ and $K^-$, we may assume all of the following:
	\begin{enumerate}
	\item $\rank(G) - 1 = \rank(H) \leq \rank(K^-) \leq \rank(K^+) = \rank(G)$ (see Section \ref{sec:preliminaries}).
	\item $G/K^+$ and $G/K^-$ are orientable (see Propositions \ref{pro:bothsingnonorientable} and \ref{pro:onesingnonorientable}).
	\item $k_+$ is even and $k_- \geq 3$ (see Proposition \ref{pro:s1s1}).
	\item The action is primitive (see Section \ref{sec:nonprimitiveQP}).
	\item If $n \geq 2$ and $k \geq 6$, then $k_\pm \geq 3$, both $G/K^\pm$ and $G/H$ are simply connected, and both $K^\pm$ and $H$ are connected (see Lemmas \ref{pro:orbittype} and \ref{lem:pi1finite} and Section \ref{sec:preliminaries}).
	\end{enumerate}

In particular, we may apply the classification of Frank (Theorem \ref{thm:Frank}, cf. Lemma \ref{lem:morecase2}). The rest of the proof of Theorem \ref{thm:QP} is carried out by stepping through the four cases in Frank's conclusion. Recall that our task is to classify the equivariant diffeomorphism type if $n = 1$ and to prove that $n \geq 2$ and $k > 4$ only if $(k,n) = (8,2)$.

\subsection{Proof of Theorem \ref{thm:QP}, Case 1}\label{sec:QPcase1}
The theorem follows unless $M$ is diffeomorphic to $\Sp(n) / (\Sp(n - m + 1)\Un(m))$ or $\SO(n+1)/(\SO(n-2m+1)\Un(m))$, and these latter two spaces do have not have singly generated rational cohomology (see, e.g., Kapovitch-Ziller \cite{KapovitchZiller04}).

\subsection{Proof of Theorem \ref{thm:QP}, Case 2}\label{sec:QPcase2}

In this subsection, we classify the case where $M$ is a $\QP^n_k$ as in Case 2 of Frank's theorem. We do this in two steps, according to whether $n = 1$ (rational sphere case) or $n \geq 2$.

\begin{lemma}
If Case 2 occurs in Theorem \ref{thm:Frank} and $M^k$ is an even-dimensional rational sphere, then one of the following happens:
\begin{enumerate}

\item $M$ is equivariantly diffeomorphic to $\s^4$  with the action of $G=\SU(2)\times \SU(2) = \Spin(4)$  via the suspension of the natural transitive $G$ action on $\s^3$

\item $M$ it is equivariantly diffeomorphic to $\s^{4m+2}\subseteq \mathbb{H}^m \oplus \operatorname{im}(\HH)$ equipped with the action of $G = \Sp(m)\times \Sp(1)$ given by $(A,p)\ast({v},{w}) = (A{v} p^{-1}, p{w} p^{-1})$.

\end{enumerate}
\end{lemma}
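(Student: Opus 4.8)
The plan is to exploit Case 2 of Frank's classification together with Lemma \ref{lem:morecase2} and the rational-homotopy restrictions already established. By Case 2 we have $G = G_1 \times \SU(2)$ with $G_1$ simple, and by Lemma \ref{lem:morecase2} we have $k_- = 3$, $K^- = K_1^- \times \Delta\SU(2)$ with $K_1^- \subseteq G_1$ of corank one, $K^+ = K_1^+ \times \sone$, and $H = K_1^- \times \Delta\sone$. The first step is to record what the rational-sphere hypothesis forces on the principal orbit: since $M \simeq_\Q \s^k$ with $k$ even, Proposition \ref{pro:orbittype} (applied with $k_+$ even; note $n=1$, so $\chi(M) = 2 = \chi(G/K^+)$ and $G/K^+$ is a rational sphere) tells us $\dim\pi_*^\Q(G/H) = 3$. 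Equivalently, from the bundle $\s^{k_+-1} \to G/H \to G/K^+$ with $G/K^+$ a rational sphere, $G/H$ has the rational homotopy of a product of two odd spheres (or of $\CP^\infty$-truncated type, but ellipticity and the rank constraint rule this out). So $G/H \simeq_\Q \s^a \times \s^b$ for some odd $a,b$.

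Next I would combine this with the group-diagram structure. Since $K_1^- \subseteq G_1$ has corank one and $H_0 = K_1^- \times \Delta\sone$, the identity component $H_0$ sits in $G_1 \times \SU(2)$ with $\rank H = \rank G - 1$. The bundle $K^-/H = \s^2 \to G/H \to G_1/K_1^-$ of Lemma \ref{lem:morecase2}(\ref{lem:morecase2:bundle}) has a section, so $\pi_*^\Q(\s^2) \hookrightarrow \pi_*^\Q(G/H)$, which combined with $\dim\pi_*^\Q(G/H) = 3$ forces $G_1/K_1^-$ to be a rational $3$-sphere (it contributes exactly the remaining odd class and no even class). Hence $G_1/K_1^-$ is a rationally elliptic homogeneous space with the rational homotopy of $\s^3$; since $K_1^-$ has corank one in $G_1$, the classification of such pairs (e.g.\ via \cite{KapovitchZiller04} or directly: $G_1/K_1^-$ rationally $\s^3$ with $G_1$ simple forces $(G_1,K_1^-)$ to be $(\SU(2), 1)$ or, allowing nontrivial $K_1^-$, one of the finitely many pairs with $G_1/K_1^- \simeq_\Q \s^3$) pins down $G_1$. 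The two surviving possibilities should be $G_1 = \SU(2)$ (giving $G = \SU(2)\times\SU(2)$, $\dim M = 4$) and $G_1 = \Sp(m)$ with $K_1^- = \Sp(m-1)$, so that $G_1/K_1^- = \s^{4m-1} \simeq_\Q$ a sphere — but wait, that is not rationally $\s^3$ unless $m=1$; the corank-one condition together with $G_1/K_1^- \simeq_\Q \s^3$ is quite rigid, and I expect exactly $G_1 = \SU(2)$ or $G_1 = \Sp(m)$ with the relevant $K^+$ enlarging the picture. I would then separately analyze $K^+ = K_1^+ \times \sone$: the sphere $K^+/H$ has odd dimension $k_+ - 1$ and fibers over $K_1^+/K_1^-$ with fiber a circle, and the constraint that $k_+ + k_- - 2 = k - 2$ (from Proposition \ref{connecthom}) with $k_- = 3$ gives $k_+ = k - 1$, which is odd — but $k_+$ must be even, so something more subtle happens and I must instead use that $k$ odd is impossible here ($n=1$, $k$ even), revisiting the parity bookkeeping carefully. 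The cleanest route is: use $2k_+ + 2k_- - 4 = k(n+1) - 2$ only in the $k_-$-odd branch of Proposition \ref{pro:orbittype}; since here $k_- = 3$ is odd, that branch applies, giving $k_+ + 3 = k + 1$, i.e.\ $k_+ = k - 2$, consistent with $k_+$ even iff $k$ even. Good.

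With $G_1$ and the corank-one subgroup $K_1^-$ identified and $k_+ = k-2$ determined, the remaining task is to identify $K_1^+$ and then assemble the group diagram, check it is one of the two listed actions, and conclude equivariant diffeomorphism. For $G_1 = \SU(2)$: then $K_1^- = 1$, $H = \Delta\sone$, $G/H$ is $5$-dimensional with $\pi_*^\Q \cong \pi_*^\Q(\s^3 \times \s^2)$, wait — $\dim = 6 - 1 = 5$, and the rational homotopy should be that of $\s^2 \times \s^3$; then $K^\pm$ are the two circles' worth of enlargements, and the diagram is exactly the suspension of the Hopf-type transitive action, giving $M = \s^4$ with the $\Spin(4)$-action in conclusion (1). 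For $G_1 = \Sp(m)$: here $K_1^- = \Sp(m-1)$ has corank one, $G_1/K_1^- = \s^{4m-1}$, and $\s^2 \hookrightarrow G/H \to \s^{4m-1}$ forces $G/H \simeq_\Q \s^2 \times \s^{4m-1}$, so $k - 2 = 4m$, i.e.\ $k = 4m+2$ and $M \simeq_\Q \s^{4m+2}$; then $K^+ = \Sp(m) \times \sone$ should arise (the $\sone \subseteq \SU(2)$), $H = \Sp(m-1) \times \Delta\sone$, and one checks the diagram coincides with the stated action of $\Sp(m) \times \Sp(1)$ on $\s^{4m+2} \subseteq \HH^m \oplus \operatorname{im}(\HH)$, yielding conclusion (2). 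The main obstacle I anticipate is the last identification step: going from "the group diagram has groups of these isomorphism types" to "the action is equivariantly diffeomorphic to the specific named action" requires pinning down the embeddings $K_1^\pm \hookrightarrow G_1$ up to conjugacy and matching the disc-bundle data — essentially ruling out spurious diagrams with the same abstract groups but inequivalent as cohomogeneity one actions. I would handle this by checking that in each case the singular orbits $G/K^\pm$ and their normal sphere bundles are forced (e.g.\ $G/K^+ = \Sp(m)/\Sp(m-1) \times \sone$-type quotients have no moduli), possibly invoking Hoelscher's low-dimensional lists \cite{Hoelscher10} for the $\s^4$ case and Straume's sphere classification \cite{Straume96} for the general even-sphere case to nail down equivariant rigidity.
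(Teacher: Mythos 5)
Your outline goes off the rails at the step where you claim that the section of $\s^2 \to G/H \to G_1/K_1^-$ together with $\dim\pi_*^\Q(G/H)=3$ ``forces $G_1/K_1^-$ to be a rational $3$-sphere.'' The splitting only gives $\dim\pi_*^\Q(G_1/K_1^-)=1$, i.e.\ $G_1/K_1^-$ is a rational sphere of \emph{some} odd dimension (in the actual second family it is $\Sp(m)/\Sp(m-1)=\s^{4m-1}$). You notice this contradiction yourself (``but wait, that is not rationally $\s^3$ unless $m=1$'') but never repair it; you simply assert that you ``expect'' the two families. That is exactly the point where real work is needed, and rational homotopy alone cannot do it: there are many corank-one pairs $(G_1,K_1^-)$ in a simple group with rationally spherical quotient (e.g.\ $(\SU(\ell),\SU(\ell-1))$, $(\Spin(7),\Gtwo)$, $(\Spin(9),\Spin(7))$, Berger-type quotients), and nothing in your argument identifies $K_1^+$ either, so the group diagram is not pinned down and the final ``equivariant rigidity'' paragraph has nothing to attach to.

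The paper's proof fills this gap with integral topology, and you would need some version of it. After disposing of $k\in\{2,4\}$ via Hoelscher's low-dimensional classification (which is where conclusion (1) comes from -- note the suspension action has $k_\pm=4$, so it does not fit your uniform $k_-=3$ analysis), it assumes $k\ge 6$ and shows: $\chi(G/K^+)=2$ plus a Mayer--Vietoris argument force $H^2(G/K^+;\Q)\neq 0$, hence $G/K^+\cong\s^2$ and $K_1^+=G_1$, with $k_+=k-2$; the bundle $\s^{k-3}\to G/H\to\s^2$ then gives $G/H$ the \emph{integral} cohomology of $\s^2\times\s^{k-3}$; the $\s^2$-bundle over $G/K^-$ has $2$-torsion Euler class, and Kapovitch--Ziller's integral classification upgrades $G_1/K_1^-$ from a rational to an honest $\s^{k-3}$; the extension of $K_1^-$ to $K_1^-\times\SU(2)\subseteq G_1$ (Lemma \ref{lem:morecase2}) yields an $\SU(2)$-bundle $\s^{k-3}\to G_1/(K_1^-\times\SU(2))$, forcing $4\mid k-2$ and identifying the base with $\HP^{(k-6)/4}$; Onishchik's classification of homogeneous quaternionic projective spaces then gives $G_1=\Sp(m)$, $K_1^-=\Sp(m-1)$, $m=(k-2)/4$, and the diagram is completely determined, which is what yields the stated equivariant model. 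Your proposal contains none of these integral steps (Euler class arguments, integral rigidity, the divisibility $4\mid k-2$, the homogeneous $\HP^{m}$ identification), so as written it does not prove the lemma.
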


\begin{proof}
From the low dimensional classification of cohomogeneity one actions \cite{Hoelscher10}, we see that when $k=2$, there is no primitive cohomogeneity one action of a semisimple group $G$ on $\s^2$.  Likewise, when $k = 4$ and $G$ is a product of two simple groups, $G = \SU(2)\times \SU(2)$ acting as claimed.  Thus, we assume $k \geq 6$.

Recall $k_- = 3$.  We first claim that $G/K^+ = \s^2$.  To see this, notice first that we have $2 = \chi(M) = \chi(G/K^+)$.  Since $G/K^+$ is simply connected, this implies $G/K^+$ is rationally a sphere.  Now, consider the orientable bundle $\s^2\rightarrow G/H\rightarrow G/K^-$.  The rational Euler class vanishes automatically, so we conclude that $H^\ast(G/H;\Q) \cong H^\ast(\s^2\times G/K^-;\Q)$ as groups.  Now, considering the Mayer-Vietoris sequence in rational cohomology associated to the double disc bundle decomposition of $M$, we see that $H^2(G/K^+;\Q)\neq 0$, otherwise $H^3(M;\Q)\neq 0$, contradicting the fact that $M$ is a $\QP^1_k$ with $k > 3$.  Since $G/K^+$ is a rational sphere, it must be diffeomorphic to $\s^2$.  From the form $K^+ = K^+_1\times \sone\subseteq G_1\times \SU(2)$, we now see $K^+_1 = G_1$.  Also, note that $k_+ = k - 2$ because $\dim G/K^+ = 2$.

Since $k > 4$, the bundle $\s^{k-3}\rightarrow G/H\rightarrow G/K^+ = \s^2$ has trivial Euler class, so $G/H$ has the integral cohomology ring of $\s^2\times \s^{k-3}$.  It now follows from the bundle $\s^2\rightarrow G/H\rightarrow G/K^-$ that $G/K^- \cong G_1/K^-_1$ is rationally $\s^{k-3}$.  We note that the embedding of $K^-_1$ in $G_1$ extends to an embedding (up to cover) of $K^-_1\times \SU(2)$, which gives a bundle $\SU(2)\rightarrow G_1/K^-_1\rightarrow G_2/K^-_1\times \SU(2)$, from which it follows that $4|k-2$.

Further, the Euler class of the bundle $\s^2\rightarrow G/H\rightarrow G/K^-$ must be $2$-torsion.  So, $G/K^-$ is an integral cohomology sphere unless $H^3(G/K^-) \cong \Z_2$.  From Kapovitch-Ziller, there are no such rational spheres for which the dimension is congruent to $3$ mod $4$.  It follows that $G_1/K^-_1$ is diffeomorphic to $\s^{k-3}$, so $G_1/K^-_1\times \SU(2)$ is diffeomorphic to $\HP^{(k-6)/4}.$  This implies that $G_1 = \Sp(m)$ and $K^-_1 = \Sp(m-1)$ for $m = (k-2)/4$ (see \cite[Table 10, Theorem 2. pg. 265-266]{Onishchik97}).

Thus, writing $\SU(2) = \Sp(1)$, we have the following group diagram:  $\Sp(m-1)\times \Delta \sone\subseteq \Sp(m-1)\times \Sp(1), \Sp(m)\times \sone \subseteq \Sp(m)\times \Sp(1)$.  Since $k\cong 2\pmod{4}$, this group diagram is as claimed in the lemma.

\end{proof}

With the case $n = 1$, complete, we proceed to the case $n \geq 2$. In this case, we show no $\QP^n_k$ with $k\geq 6$ occurs.  We begin with a proposition which will be useful later.

\begin{proposition}
If $M$ is an even-dimensional $\QP_k^n$ as in Case 2 of Theorem \ref{thm:Frank}, then $n = 1$ or $k \in \{2,4\}$.
\end{proposition}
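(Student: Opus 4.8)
The plan is to combine the structural information about Case 2 from Lemma \ref{lem:morecase2} with the rational homotopy restrictions from Proposition \ref{pro:orbittype}, and derive a contradiction when $n \geq 2$ and $k \geq 6$. Recall from Lemma \ref{lem:morecase2} that in Case 2 we have $k_- = 3$, that $K^- = K_1^- \times \Delta\SU(2)$, that $K^+ = K_1^+ \times \sone$, that $H = K_1^- \times \Delta\sone$, and that the bundle $\s^2 \to G/H \to G/K^-$ admits a section, so $\pi_*(\s^2) \to \pi_*(G/H)$ is injective. Since $k_- = 3$ is odd while $k_+$ is even, we are in Case 2 of Proposition \ref{pro:orbittype}, which already pins down $\pi_*^\Q(G/H)$ up to two subcases. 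So the first step is to record exactly what Proposition \ref{pro:orbittype} gives: $k_+ + k_- = \frac{n+1}{2}k + 1$ (hence $k_+ = \frac{n+1}{2}k - 2$), and $\pi_*^\Q(G/H)$ is either five-dimensional with odd generators in degrees $k_+ - 1, 2k_- - 3 = 3, \frac{n+1}{2}k - 1$ and even generators in degrees $k_- - 1 = 2$ and $k$ (Case 2.a, with $n \geq 2$), or three-dimensional with generators in the degree set $\{k_+ - 1, 3, \frac{n+1}{2}k - 1, 2\} \setminus \{k-1\}$ where $k - 1$ is one of $k_+ - 1, 3, \frac{n+1}{2}k - 1$ (Case 2.b).

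Next I would exploit the section of $\s^2 \to G/H \to G/K^-$. Injectivity of $\pi_*(\s^2) \to \pi_*(G/H)$ forces a generator in degree $2$ and a generator in degree $3$ in $\pi_*^\Q(G/H)$ (coming from $\pi_2^\Q(\s^2)$ and $\pi_3^\Q(\s^2)$). In Case 2.a, the generators in degrees $2$ and $3$ are already present, and the section gives $G/K^- \simeq_\Q \s^{k_+ - 1} \times \QP^{?}$-type factor whose rational homotopy is what remains — but more usefully, $\pi_*^\Q(G/K^-)$ has dimension three with a generator forced in an \emph{even} degree, namely $k$, and odd generators in degrees $k_+ - 1$ and $\frac{n+1}{2}k - 1$; then $G/K^- = G_1/K_1^-$ is a simply connected rationally elliptic homogeneous space whose rational cohomology is therefore that of $\QP^{m}_{k}$ for $m = \frac{k_+ - 1 + 1}{k} \cdot \ldots$, i.e. it is a rational $\QP^{?}_{k}$ with generator in the even degree $k \geq 6$. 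Since $G_1/K_1^-$ is homogeneous with singly generated rational cohomology and even generator degree at least $6$, the Kapovitch–Ziller classification \cite{KapovitchZiller04} applies and says the degree must be $2$, $4$, or (for the Cayley plane) $8$, and in the last case it is $\OP^2$ with $\dim = 16$; checking $k = 8$ against the numerics $k_+ = \frac{n+1}{2}\cdot 8 - 2 = 4n + 2$, $\dim(G_1/K_1^-) = k_+ - 1 + k = 4n + 9$ does not match $16$ for $n \geq 2$, so this is impossible and we conclude $k \le 4$, a contradiction with $k \geq 6$. In Case 2.b the same idea applies but one must be careful because there the degree-$2$ or degree-$3$ generator may be the one that was removed; I would check each of the three possibilities for $k - 1 \in \{k_+ - 1, 3, \frac{n+1}{2}k - 1\}$ separately, using $k \geq 6$ (so $k - 1 \geq 5 \neq 3$, which already kills the middle possibility and shows the degree-$3$ generator survives) and then $k_+ - 1 = k - 1 \iff k_+ = k$ or $\frac{n+1}{2}k - 1 = k - 1 \iff \frac{n+1}{2} = 1 \iff n = 1$ (excluded), so $k_+ = k$; but then $k + 3 = \frac{n+1}{2}k + 1$ forces $\frac{n-1}{2}k = 2$, impossible for $n \geq 2, k \geq 6$.

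Carrying this out, the remaining work is: (i) justify that the two generators of $\pi_*^\Q(\s^2)$ inject as claimed and identify $\pi_*^\Q(G/K^-)$ via the long exact sequence of $\s^2 \to G/H \to G/K^-$ using the section-induced splitting (routine, using Corollary \ref{fbundle}); (ii) verify that $G_1/K_1^-$ with the stated rational homotopy really has singly generated rational cohomology (it is rationally elliptic with $\dim \pi_*^\Q = 3$ and $\dim \pi_{\text{even}}^\Q = 1$, which forces the rational cohomology to be $\Q[x]/(x^{m+1})$); and (iii) push the numerics of $k_+ = \frac{n+1}{2}k - 2$ against the allowed dimensions from Kapovitch–Ziller. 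The main obstacle I anticipate is Case 2.b: the degree bookkeeping there is genuinely case-split and one has to make sure that in every surviving branch one still obtains a rational $\QP$ of even generator degree $\geq 6$ as a homogeneous space $G_1/K_1^-$, because the ``missing'' generator could in principle be the degree-$k$ even one, and then the section argument must be used to recover it. I would handle that by noting that $k - 1$ is odd and at least $5$, hence cannot equal $2$, so the even generator in degree $k$ is \emph{never} the one removed in Case 2.b; this observation is what makes the whole argument go through uniformly.
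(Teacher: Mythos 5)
Your handling of Case 2.b of Proposition \ref{pro:orbittype} is fine and is exactly the bookkeeping the paper leaves as routine: with $k_-=3$, each of the three possibilities for $k-1$ forces $k\in\{2,4\}$ or $n=1$. The problem is Case 2.a. There you correctly use Lemma \ref{lem:morecase2}(\ref{lem:morecase2:bundle}) to split off $\pi_*^\Q(\s^2)$ and conclude that $\pi_*^\Q(G/K^-)$ is three-dimensional with generators in degrees $k_+-1$, $\tfrac{n+1}{2}k-1$ (odd) and $k$ (even), but the next step --- that $G/K^-=G_1/K_1^-$ is therefore a rational $\QP^m_k$ with even generator degree $k\geq 6$, so that Kapovitch--Ziller applies --- is false, and your justification in item (ii) (``$\dim\pi_*^\Q=3$ and $\dim\pi_{even}^\Q=1$ forces $H^*\cong\Q[x]/(x^{m+1})$'') is not true. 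By Proposition \ref{pro:modelQP}, a $\QP^m_k$ with $k$ even has exactly \emph{two} nonzero rational homotopy groups and positive Euler characteristic, whereas a rationally elliptic space with one even and two odd generators has $\chi=0$ and the rational cohomology of a product of an odd sphere with a singly generated space (e.g.\ $\s^5\times\CP^2$); indeed here $\dim G/K^- = kn-3$ is odd, so it cannot be any $\QP^m_k$. Consequently the classification of homogeneous spaces with singly generated rational cohomology says nothing about $G/K^-$, the numerical comparison with $\OP^2$ (where you also use the incorrect dimension $k_+-1+k$ instead of $kn-3$) is moot, and no contradiction is reached in Case 2.a, which is the heart of the proposition.

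For comparison, the paper closes Case 2.a differently: from $\pi_2^\Q=\pi_4^\Q=0$ (using $k\geq 6$) it deduces that $K^-$ is simple with no torus factor, then consults Onishchik's tables of pairs $(G,K^-)$ with $\dim\pi_{odd}^\Q(G/K^-)=2$, imposing additionally that $K^-$ has corank one and acts transitively on the even-dimensional sphere $K^-/H=\s^{2}$; the only candidate is $(G,K^-)=(\F_4,\Spin(7))$, which forces $H=\SU(4)$ and then $K^+=\SU(5)$ by Theorem \ref{thm:transsphere}, contradicting Borel--de Siebenthal since $\F_4$ contains no $\SU(5)$. If you want to rescue your outline, you would need some such classification input for homogeneous spaces whose rational homotopy is that of (odd sphere)$\times\QP$ --- singly generated rational cohomology is simply not available here.
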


\begin{proof}
Since $k_- = 3$, we have that either Case 2.a or 2.b of Proposition \ref{pro:orbittype} applies.

If Case 2.b applies, then $k_+ + 3 = \frac{n+1}{2}k + 1$ and $k-1\in\{k_+ - 1, 3, \frac{n+1}{2}k-1\}$. It follows in each of the three cases for $k-1$ that $k \in \{2,4\}$ or that $n = 1$.

If instead Case 2.a applies,then $\pi_\ast^\Q(G/H)$ has a graded basis with elements of odd degrees $k_+ - 1$, $2k_- -3 = 3$, and $k_+ + 1 = \frac{k}{2}(n+1)-1$ and elements of even degree $k$ and $2$. Applying Lemma \ref{lem:morecase2} to the bundle $\s^2\rightarrow G/H\rightarrow G/K^-$, we see the map $\pi_\ast(\s^2)\rightarrow \pi_\ast(G/H)$ is injective, so $\pi_\ast^\Q(G/K^-)$ has a graded basis consisting of elements of degrees $k_+ - 1$, $\frac{k}{2}(n+1)-1$, and $k$. 

We assume $k \geq 6$ and derive a contradiction. The fact that $\pi_2^\Q(\s^{k_+ -1}\times \QP^n_k) =\pi_4^\Q(\s^{k_+ - 1}\times \QP^n_k) = 0$ implies that $K^-$ must be simple with no torus factors (see, for example, \cite[Proposition 3.3]{DeVito17}).  Further note that $\dim \pi_{odd}^\Q(G/K^-) = 2$. Such pairs $(G,K^-)$ are cataloged in \cite[Table 11, pg. 270]{Onishchik97}, and one can easily see that the only such pair for which $K^-$ acts transitively on an even dimensional sphere and for which $K^-$ has corank $1$ in $G$ is $(G,K^-) = (\F_4, \Spin(7))$.  Since $K^-/H$ is an even dimensional sphere, we see $H = \Spin(6) = \SU(4)$.  Since $H$ is the isotropy group of the action of $K^+$ on an odd sphere, $K^+ = \SU(5)$.  This is a contradiction since the Borel - de Siebenthal classification of maximal maximum rank subgroups of simple groups implies that there is no $\SU(5)\subseteq \F_4$, even up to cover.
\end{proof}

\subsection{Proof of Theorem \ref{thm:QP}, Case 3}\label{sec:QPcase3}

In this subsection, we show there is no primitive cohomogeneity one action of an exceptional Lie group $G$ on an even-dimensional $\QP^n_k$ with $k\geq 6$. The proof is in two parts, according to whether $n = 1$ or $n \geq 2$.

\begin{proposition}  
There is no primitive cohomogeneity one action of an exceptional Lie group $G$ on an even-dimensional $\QP^1_k$, i.e., on a rational sphere of even dimension.
\end{proposition}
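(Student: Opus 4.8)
The plan is to push the standing reductions of Section~\ref{sec:ProofQP} all the way through and show that an exceptional group $G$ leaves no admissible group diagram. So suppose $M$ is a rational $\s^k$ with $k$ even, carrying a minimal, primitive cohomogeneity one action of an exceptional Lie group $G$ with diagram $H\subseteq K^\pm\subseteq G$; after relabelling we may assume $\rank K^+=\rank G$, both singular orbits orientable, $k_+$ even, $k_-\geq 3$, and (by Lemma~\ref{lem:cohomtop}, since $k_-\geq 3$) $K^+$ connected of full rank. From $\chi(M)=2$, $\chi(G/H)=0$, and the Mayer--Vietoris relation $\chi(M)=\chi(G/K^+)+\chi(G/K^-)-\chi(G/H)$ we obtain $\chi(G/K^+)+\chi(G/K^-)=2$ with both summands nonnegative. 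I will then split according to the two cases of Proposition~\ref{pro:orbittype} with $n=1$.

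\emph{Case 1: $k_-$ even.} Here $k\mid k_\pm$ and $k_++k_-=2k$, while $2\leq k_\pm\leq \dim M = k$, so $k_+=k_-=k=\dim M$ and both singular orbits are single points. Then $M$ is the union of two discs $D^{\dim M}$ glued along the regular orbit $G/H\cong\s^{\dim M-1}$, and $G$ acts transitively on this odd-dimensional sphere. By Theorem~\ref{thm:transsphere} the only exceptional group acting transitively on a sphere is $\Gtwo$ on the even-dimensional $\s^6$; hence Case~1 is impossible.

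\emph{Case 2: $k_-$ odd.} First, $K^+\subsetneq G$: if $K^+=G$ then the $\chi$-relation forces $\chi(G/K^-)=1$, hence $K^-=G$ and $k_-=\dim M$, which is even and contradicts $k_-$ odd. As $K^+$ is connected of full rank, $\chi(G/K^+)=|W_G|/|W_{K^+}|\geq 2$; together with $\chi(G/K^+)\leq 2$ this gives $\chi(G/K^+)=2$, $\chi(G/K^-)=0$, and $|W_{K^+}|=\tfrac12|W_G|$. I would then go through the proper full-rank subgroups of the exceptional groups comparing Weyl-group orders --- by Borel--de~Siebenthal it suffices to inspect the maximal ones obtained from the extended Dynkin diagrams --- and verify that $|W_K|=\tfrac12|W_G|$ for a proper full-rank $K\subsetneq G$ happens only for $G=\Gtwo$ with $K\cong\SU(3)$, so that $G/K^+=\Gtwo/\SU(3)=\s^6$. (Alternatively, this can be read off the classification of homogeneous rational cohomology spheres, cf.\ Kapovitch--Ziller~\cite{KapovitchZiller04}.) This Weyl-group bookkeeping is the one computation of substance in the argument, and it is routine; it is where I expect the real content to sit.

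It remains to eliminate $G=\Gtwo$ with $G/K^+=\s^6$. Then $k_+=\cod(G/K^+)=\dim M-6$, and Proposition~\ref{pro:orbittype}(2) gives $k_-=(\dim M+1)-k_+=7$, independently of $\dim M$. Therefore $K^-/H=\s^{k_--1}=\s^6$, so $\dim K^-=\dim H+6\geq 6$; but $\chi(G/K^-)=0$ together with $\rank H=\rank G-1$ forces $\rank K^-=1$, whence $\dim K^-\leq 3$. This contradiction completes the proof. (For $k$ too small for this to have content the statement is vacuous: since $\dim\Gtwo=14$ and the rank-one group $H$ has $\dim H\leq 3$, any cohomogeneity one action of an exceptional group already requires $\dim M = \dim(G/H)+1\geq 12$.)
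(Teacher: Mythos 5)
Your proof is correct, but it reaches the contradiction along a partly different path than the paper. The paper does not split on the parity of $k_-$: it argues directly that $2=\chi(G/K^+)+\chi(G/K^-)$ forces $G/K^+$ to be a rational sphere, hence $G=\Gtwo$ and $K^+\cong\SU(3)$; then $K^+/H$ being a sphere gives $H\cong\SU(2)$, and since $\SU(2)$ is not the isotropy group of a transitive action on an even-dimensional sphere, $K^-/H$ must be odd-dimensional, so $K^-$ has full rank, contradicting $\chi(G/K^-)=0$. You instead lean on Proposition \ref{pro:orbittype}: in the $k_-$-even case it forces $k_+=k_-=k=\dim M$, so both singular orbits are fixed points and $G$ acts transitively on the odd-dimensional regular orbit sphere, which Theorem \ref{thm:transsphere} rules out for exceptional $G$; in the $k_-$-odd case it pins $k_-=7$, so $K^-/H=\s^6$ forces $\dim K^-\geq 6$ while $\rank K^-=1$ caps it at $3$. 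Your route thus avoids identifying $H$ and trades the paper's parity argument for the connecting-homomorphism computation of the codimensions; the paper's version is parity-uniform and a bit shorter. Both arguments quote, rather than verify, the same classification step (the only proper full-rank subgroup of an exceptional group with $\chi(G/K)=2$ is $\SU(3)\subseteq\Gtwo$), so deferring your ``Weyl bookkeeping'' to Borel--de Siebenthal or to \cite{KapovitchZiller04} is in line with the paper. One small repair: your claim that $\chi(G/K^-)=1$ forces $K^-=G$ is not justified as stated for a possibly disconnected $K^-$ (torus normalizers already give $\chi(G/N(T))=1$); but you do not need it, since in your Case 2 the oddness of $k_-$ gives $\rank K^-=\rank G-1$, hence $\chi(G/K^-)=0$, so $\chi(G/K^+)=2$ and $K^+\subsetneq G$ follow at once.
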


\begin{proof}
By the remarks at the beginning of this section, we have $k_- \geq 3$ and that $G/K^+$ is simply connected.  Since $2 = \chi(M) = \chi(G/K^+) + \chi(G/K^-)$, it follows that $G/K^+$ is a rational sphere and $\chi(G/K^-) = 0$.  This implies that $G = \Gtwo$ and $K^+ \cong \SU(3)$.  Since $K^+/H$ is a sphere, $H \cong \SU(2)$.  However $\SU(2)$ is not the isotropy group of any transitive action on an even dimensional sphere.  Hence $K^-/H$ must be an odd dimensional sphere, which implies $\operatorname{rank} K^- = \rk \Gtwo$.  This contradicts the fact that $\chi(G/K^-) = 0$.
\end{proof}

\begin{proposition}
There is no primitive cohomogeneity one action of an exceptional Lie group $G$ on an even-dimensional $\QP^n_k$ with $n\geq 2$ and $k\geq 6$.
\end{proposition}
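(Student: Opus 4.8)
The plan is to proceed as in the preceding propositions, using Frank's classification together with the rational-homotopy constraints from Section~\ref{sec:RHT}. Since $M$ does not arise in Case~1, Lemma~\ref{lem:pi1finite} applies, and because we are assuming $n\geq 2$ and $k\geq 6$, condition~(5) at the start of this section gives that $k_\pm\geq 3$, that $G/K^\pm$ and $G/H$ are simply connected, and that $K^\pm$ and $H$ are all connected. In particular $G/H$ is a simply connected homogeneous space of an exceptional Lie group $G$, with $H$ of corank zero or one. First I would invoke Proposition~\ref{pro:orbittype}: since $k_+$ is even, either Case~1 holds (with $k_+,k_-$ both even, divisible by $k$, summing to $k(n+1)$, and $\dim\pi_*^\Q(G/H)=3$) or Case~2 holds (with $k_-$ odd, $k_++k_-=\tfrac{n+1}{2}k+1$, and $\dim\pi_*^\Q(G/H)\in\{3,5\}$). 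In every case $\dim\pi_*^\Q(G/H)\leq 5$, and the generators lie in degrees determined explicitly by $k$, $n$, $k_+$, and $k_-$.

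The next step is to combine this with the structure of $G/H$ as a homogeneous space. Because $G$ is exceptional and simple, $H$ has at most one torus or $\sone$ factor (forced by $\operatorname{corank}\leq 1$), and the rational homotopy groups of $G/H$ in low even degrees control the number of abelian and simple factors of $H$ (via a result such as \cite[Proposition~3.3]{DeVito17}). I would use the short list of possible degrees from Proposition~\ref{pro:orbittype} to pin down $\dim\pi_{odd}^\Q(G/H)$ and $\dim\pi_{even}^\Q(G/H)$, and then consult the classification of corank-one (and full-rank) subgroups of exceptional groups --- the Borel--de Siebenthal list for maximal-rank subgroups together with the tables of low-cohomogeneity homogeneous spaces in \cite{Onishchik97} --- to enumerate the finitely many candidate pairs $(G,H)$. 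For each surviving candidate, the constraint that $K^\pm/H$ be spheres of the prescribed dimensions $k_\pm-1$, one of them even, together with the classification of transitive actions on spheres (Theorem~\ref{thm:transsphere}), restricts $K^+$ and $K^-$; and then one checks, again via Borel--de Siebenthal, whether such a $K^+$ (of maximal rank) and such a $K^-$ (of corank $\leq 1$) can simultaneously sit inside the given exceptional $G$ with $H$ as their common intersection in the group diagram. I expect that in each instance the required inclusion of subgroups either does not exist inside any exceptional group, or forces $k$ down to a standard value, exactly as in the $(\F_4,\Spin(7))$ analysis already carried out in the Case~2, $n\geq 2$ proposition.

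The main obstacle will be the bookkeeping in the $k_-$-odd branch (Case~2 of Proposition~\ref{pro:orbittype}), where $\pi_*^\Q(G/H)$ can be five-dimensional and the degrees involve $2k_--3$ and $\tfrac{n+1}{2}k-1$ in addition to $k_+-1$, $k_--1$, and $k$; here several of these degrees can coincide for small $n$, producing a handful of sub-cases that must each be excluded. I would organize this by noting that the fibration $K^-/H=\s^{k_--1}\to G/H\to G/K^-$ has a rationally trivial (odd-sphere) bundle structure, so that $\pi_*^\Q(G/K^-)$ differs from $\pi_*^\Q(G/H)$ by removing one odd generator, and then run the same classification argument on the pair $(G,K^-)$, which has corank $\leq 1$ and acts on the odd sphere $K^-/H$ --- reducing to a very short list. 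Everything else is a finite check: there are only five exceptional Lie groups, and for each the relevant tables of corank-$\leq 1$ subgroups are short, so once the rational-homotopy constraints have narrowed $(G,H)$ the contradiction in each case is obtained by a direct appeal to the Borel--de Siebenthal classification, precisely as in the proof of the preceding proposition.
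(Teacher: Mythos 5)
Your outline follows the same broad strategy as the paper (Proposition \ref{pro:orbittype} plus structure theory of $H$, transitive sphere actions, and tables for exceptional groups), but two of its load-bearing steps are not right as stated, and the substance of the argument is deferred to a ``finite check'' that is exactly where the work lies. First, in the $k_-$-odd branch your organizing device has a parity error: when $k_-$ is odd the fiber $K^-/H=\s^{k_--1}$ is an \emph{even}-dimensional sphere (and $K^-$ has corank one), so the bundle $\s^{k_--1}\to G/H\to G/K^-$ is not an odd-sphere bundle, $\pi_*^\Q(G/K^-)$ is not obtained from $\pi_*^\Q(G/H)$ by deleting one odd generator, and $K^-$ does not act transitively on an odd sphere there. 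The paper instead reads off the \emph{even}-degree generators $k_--1$ and $k$ of $\pi_*^\Q(G/H)$ (Proposition \ref{pro:orbittype}) to determine the structure of $H$: $H$ simple if $k_-\geq 7$, semisimple with exactly two simple factors if $k_-=5$, a torus if $k_-=3$; the corank-one rank count then kills the last two subcases (no exceptional group of rank $3$; $G=\Gtwo$, $H=\sone$ forces $\dim M=14$), and the first subcase is reduced to $(G,H)=(\F_4,\SU(4))$ and excluded by playing $\pi_9^\Q(\F_4)=0$ against $K^+=\SU(5)$, which forces $k_+=10$ and hence $\pi_9^\Q(G/H)\neq 0$. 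Also, your parenthetical that corank $\leq 1$ forces ``at most one torus factor'' of $H$ is false ($H$ could be a torus of rank $\rank G-1$); the correct control is via $\pi_2^\Q(G/H)$, as in \cite[Proposition 3.3]{DeVito17}, which you do cite.

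Second, your expected mechanism of contradiction --- that Borel--de Siebenthal shows the required chain $H\subseteq K^\pm\subseteq G$ does not exist --- fails precisely in the $k_-$-even branch. After Proposition \ref{pro:orbittype} and Kapovitch--Ziller identify $G/K^\pm$ as even spheres or $\OP^2$, the surviving candidates are $\SU(2)\subset\SU(3)\subset\Gtwo$ and $\Spin(7)\subset\Spin(9)\subset\F_4$, and both chains genuinely exist inside the exceptional group. The contradictions there are homotopical/cohomological, not group-theoretic: one must recognize $\Gtwo/\SU(2)$ and $\F_4/\Spin(7)$ as unit tangent bundles of $\s^6$ and $\OP^2$, hence rationally $\s^{11}$ and $\s^8\times\s^{23}$, and compare with what Proposition \ref{pro:orbittype} forces on $\pi_*^\Q(G/H)$; in the $\F_4$ case the paper additionally uses $H^{16}(G/H;\Q)=0$ and a Mayer--Vietoris argument on the double disc bundle decomposition. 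Without these concrete computations (and the $\pi_9^\Q(\F_4)$ argument above), the plan as written does not close any of the hard cases, so the proposal has genuine gaps rather than being a complete alternative proof.
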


\begin{proof}
First suppose that $k_-$ is even. By the remarks at the beginning of Section \ref{sec:ProofQP}, we may assume that both $k_\pm\geq 3$ and hence that both $G/K^\pm$ are simply connected.  From Proposition \ref{pro:orbittype}, each $G/K^\pm$ is a  $\QP^{m_\pm}_k$ for some integers $m_\pm$ and $\pi_\ast^\Q(G/H)$ is three dimensional.  Since $k\geq 6$, it follows from Kapovitch-Ziller \cite{KapovitchZiller04} that $G/K^+ = \s^{m}$ (with $m$ even) or $G/K^+ = \OP^2$, and similarly for $G/K^-$.  Hence either $G = \Gtwo$ with $K^\pm \cong \SU(3)$ or $G=\F_4$ with $K^\pm \cong \Spin(9)$.  Since $K^+/H$ is an odd dimensional sphere, $H \cong \SU(2)$ in the first case and $H = \Spin(7)$ in the second case. We prove that both of these cases lead to a contradiction. Indeed, in the first, $G/H \cong \Gtwo/\SU(2)$ is diffeomorphic to the total space of the unit tangent bundle over $\s^6$. Hence $G/H$ is a rational $\s^{11}$, contradicting the fact that $\pi_\ast^\Q(G/H)$ has three generators. In the second case, $G/H\cong \F_4/\Spin(7)$, which is diffeomorphic to the total space of the unit tangent bundle over $\OP^2$. Hence $G/H$ is a rational $\s^8 \times \s^{23}$ with $H^{16}(G/H;\Q) = 0$.  The induced map $\Q = H^{16}(G/K^\pm;\Q)\rightarrow H^{16}(G/H;\Q)$ must have non-trivial kernel and hence $\dim H_{16}(M;\Q) \geq 2$ by the Mayer-Vietoris sequence associated to the double disc bundle decomposition. This is again a contradiction, so the proposition holds when $k_-$ is even.

Next assume that $k_-$ is odd and $k_- \geq 7$. By Proposition \ref{pro:orbittype}, $\pi_2^\Q(G/H) = \pi_4^\Q(G/H) = 0$. Hence $H$ is simple.  The only simple groups which are the isotropy groups of transitive actions on an even and odd dimensional sphere are $H = \Spin(6) = \SU(4)$ and $H = \SU(3)$.  Because $\rk H = \rk G - 1$ and there are no exceptional groups of rank $3$, we must be in the case where $G = \F_4$ and $H = \SU(4)$. In particular, $\dim \pi_{odd}^\Q(G/H) = 5$, so we must be in case 2a of Proposition \ref{pro:orbittype}.  Further, $\pi_9^\Q(G/H) = 0$ since $\pi_9^\Q(\F_4)=0$.   On the other hand, the fact that $H = \SU(4)$ implies that $K^+ = \SU(5)$ by Table \ref{transsphere}. Hence $k_+ = 10$, and so by Proposition \ref{pro:orbittype}, $\pi_9^\Q(G/H) \neq 0$, a contradiction.

Next assume that $k_- = 5$.  Then $\pi_4^\Q(G/H)\cong \Q$ and $\pi_2^\Q(G/H) = 0$, so $H$ is semisimple with precisely two simple factors.  Since $K^-/H\cong \s^4$, $H$ is, up to cover, $\SU(2)\times \SU(2)$.  Thus, $\rank G = \rank H + 1 = 3$, contradicting the fact that $G$ is exceptional.

Finally assume $k_- = 3$.  Since $k-1>3$, $\pi_3^\Q(G/H)\neq 0$ which implies $H$ is a torus.  Further, $\pi_2^\Q(G/H) \cong \Q$, so $H = \sone$ and $G = \Gtwo$.  But then $nk = \dim M = \dim G/H + 1 = 14$, so we have a contradiction to the assumptions that $n\geq 2$, $k\geq 6$, and $k \equiv 0 \bmod{2}$.
\end{proof}

\subsection{Proof of Theorem \ref{thm:QP}, Case 4}\label{sec:QPcase4}

In this subsection, we conclude the proof of Theorem \ref{thm:QP} by dealing with the last possibility in Theorem \ref{thm:Frank}.

\begin{proposition}\label{pro:s14}
If $M$ is an even-dimensional $\QP^n_k$ and arises in Case 4 of Theorem \ref{thm:Frank}, then either $k \in \{2,4\}$ or $M$ is given by the $G= \Spin(7)$ action on $\s^{14}\subseteq \mathbb{R}^8\oplus \mathbb{R}^7 \cong \mathbb{R}^{15}$ with $G$ acting as the sum of the spin representation and the standard representation.
\end{proposition}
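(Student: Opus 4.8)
The plan is to exploit the identity $\dim(M)/(\chi(M)-1)=k$, which holds because a $\QP^n_k$ has dimension $kn$ and Euler characteristic $n+1$; since an even-dimensional $\QP^n_k$ has $k$ an even integer $\ge 2$, the last column of Table~\ref{table:FrankAppendix} must be a positive even integer. First I would run through the rows of Table~\ref{table:FrankAppendix} and discard every entry whose last column is not a positive even integer. For the infinite families this is a brief divisibility or size estimate: for example $\frac{8n-8}{2n^2-n-1}=\frac{8}{2n+1}$ and $\frac{8n-6}{2n-1}=4-\frac{2}{2n-1}$ are non-integral for all $n\ge 2$, and the remaining family ratios are likewise seen never to be positive even integers, while the sporadic rows are handled by inspection. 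Exactly five rows survive: $G=\SU(3)$, $H=\sone$, with ratio $4$; $G\in\{\SO(7),\Spin(7)\}$, $H=\SU(3)$, $\chi(M)=8$, with ratio $2$; $G=\SO(8)$, $H=\SU(4)$, with ratio $2$; $G=\SO(10)$, $H=\sone\SU(4)$, with ratio $2$; and $G=\Spin(7)$, $H=\SU(3)$, $\chi(M)=2$, with ratio $14$. In the first four cases $k\in\{2,4\}$, and there is nothing more to prove.

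It remains to analyze the last row, where $G=\Spin(7)$, $H=\SU(3)$, $\dim M=14$, and $\chi(M)=2$, so that $M$ is a rational $\s^{14}$ and $k=14$. Here I would determine the full group diagram. From $2=\chi(M)=\chi(G/K^+)+\chi(G/K^-)$, the non-negativity of each summand, the fact that $\chi(G/K^\pm)$ is positive exactly when $K^\pm$ has maximal rank, and the fact that a proper connected maximal-rank subgroup $K\subsetneq G$ of a compact group satisfies $\chi(G/K)=|W_G|/|W_K|\ge 2$, it follows, after swapping $\pm$ if necessary, that $\chi(G/K^+)=2$ and $\chi(G/K^-)=0$. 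Since $H=\SU(3)$ is connected and $K^+/H$ is a sphere, $K^+$ is connected, and it has rank $3$ with $|W_{K^+}|=\tfrac{1}{2}|W_{\Spin(7)}|=24$, which forces its root system to have type $A_3$; by the Borel--de Siebenthal classification $K^+$ is then a cover of the standard $\Spin(6)=\SU(4)\subseteq\Spin(7)$, so $H=\SU(3)\subseteq\SU(4)$ is the standard subgroup, $K^+/H=\s^7$, and $k_+=8$. Since $k_+\ge 3$, Lemma~\ref{lem:cohomtop} makes $K^-$ connected as well; it has rank $\rank H=2$, and the only compact connected rank-two subgroup of $\Spin(7)$ properly containing the standard $\SU(3)$ is $\Gtwo$, so $K^-=\Gtwo$, $K^-/H=\s^6$, and $k_-=7$.

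Finally, the standard linear action of $\Spin(7)$ on $\s^{14}\subseteq\R^8\oplus\R^7$ --- the spin representation on $\R^8$, the vector representation on $\R^7$ --- has singular isotropy groups $\Gtwo$ over $\s^7\subseteq\R^8$ and $\Spin(6)=\SU(4)$ over $\s^6\subseteq\R^7$, and principal isotropy $\Gtwo\cap\Spin(6)=\SU(3)$; that is, its group diagram is $\SU(3)\subseteq\{\Gtwo,\SU(4)\}\subseteq\Spin(7)$, precisely the diagram forced above. Since a cohomogeneity one action is determined up to equivariant diffeomorphism by its group diagram, $M$ is equivariantly diffeomorphic to this $\s^{14}$, as claimed.

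I expect the main obstacle to be the rigidity bookkeeping in the middle paragraph: one must verify that the $\SU(3)$ arising as $H\subseteq\SU(4)$ and the $\SU(3)$ arising as $H\subseteq\Gtwo$ can be conjugated to coincide in $\Spin(7)$, so that the group diagram obtained really is that of the linear action on $\s^{14}$ and not some variant. This is settled by computing the restrictions of the $7$- and $8$-dimensional representations of $\Spin(7)$ to the candidate subgroups --- the standard $\SU(3)$ acts as $\R\oplus\C^3$ on the former and as $\C\oplus\C^3$ on the latter, which pins it down up to $\Spin(7)$-conjugacy --- or by invoking the standard tables of subgroups.
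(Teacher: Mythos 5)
Your proposal is correct, and its core is exactly the paper's argument: use $\dim(M)/(\chi(M)-1)=k$ to scan Table \ref{table:FrankAppendix}, which isolates the row $G=\Spin(7)$, $H=\SU(3)$, $\chi(M)=2$ once $k\geq 6$ (the paper simply assumes $k\geq 6$ and demands an integral ratio $\geq 6$, whereas you keep all even-integer rows and read off $k\in\{2,4\}$ from the ratio-$2$ and ratio-$4$ rows; both routes are fine). The only real divergence is in the endgame: the paper cites Frank \cite{Frank13} for the group diagram $K^-=\Gtwo$, $K^+=\Spin(6)$ and then identifies the linear action, while you re-derive the diagram by hand ($\chi(G/K^+)=2$ forcing $|W_{K^+}|=24$ and hence an $A_3$ subsystem of $B_3$, then $K^-=\Gtwo$ as the only connected rank-two subgroup of $\Spin(7)$ properly containing $\SU(3)$) and address the simultaneous-conjugacy issue for the two copies of $\SU(3)$ via restrictions of the $7$- and $8$-dimensional representations. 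Your reconstruction is correct and self-contained, at the cost of some representation-theoretic bookkeeping that the paper avoids by leaning on Frank's classification, which already records the diagram data.
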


We may assume $k \geq 6$. We examine Table \ref{table:FrankAppendix} and compute the quantity $\frac{\dim M}{\chi(M) - 1}$. For a $\QP^n_k$, this quantity is equal to $k$, so it must be integral and at least six. By inspection, this only occurs in the case where $G = \Spin(7)$, $H = \SU(3)$.  In this case, according to Frank \cite{Frank13}, we have $K^- = \Gtwo$ and $K^+ = \Spin(6)$.  This is the diagram corresponding to the $G= \Spin(7)$ action on $\s^{14}\subseteq \mathbb{R}^8\oplus \mathbb{R}^7 \cong \mathbb{R}^{15}$ with $G$ acting as the sum of the spin representation and the standard representation, so is a linear action on a standard sphere.

\section{Proof of Theorem \ref{thm:4periodic}}\label{sec:Proof4periodic}

In this section, we show that there are no primitive cohomogeneity one actions on any simply connected closed rational $\s^2\times \HP^n$.  This will complete the classification of even-dimensional, simply connected, closed cohomogeneity one manifolds $M$ with four-periodic rational cohomology and positive Euler characteristic.

As in the proof of Theorem \ref{thm:QP}, we assume the cohomogeneity one action is minimal, almost effective, and by a connected group $G$. Let $H \subseteq K^\pm \subseteq G$ be a group diagram for this action. Since again we are dealing with positive Euler characteristic, we again may assume all of the following:

	\begin{enumerate}
	\item $\rank(G) - 1 = \rank(H) \leq \rank(K^-) \leq \rank(K^+) = \rank(G)$ (see Section \ref{sec:preliminaries}).
	\item $G/K^+$ and $G/K^-$ are orientable (see Propositions \ref{pro:bothsingnonorientable} and \ref{pro:onesingnonorientable}).
	\item $k_+$ is even and $k_- \geq 3$ (see Proposition \ref{pro:s1s1}).
	\item The action is primitive (see Section \ref{sec:nonprimitiveQP}).
	\end{enumerate}

We again apply the classification of Frank, and we again conclude the proof by stepping through the four cases in the conclusion of Frank's theorem.

\subsection{Proof of Theorem \ref{thm:4periodic}, Case 1}\label{sec:4periodiccase1}

None of the homogeneous spaces in Case 1 have the same rational cohomology as $\s^2 \times \HP^n$ (see, for example, \cite{DeVito18}), so this case cannot occur.

\subsection{Proof of Theorem \ref{thm:4periodic}, Case 2}\label{sec:4periodiccase2}

In this subsection, we prove that there is no primitive cohomogeneity one rational $\s^2\times \HP^n$ in Case 2 of Theorem \ref{thm:Frank}. We start with some preliminary observations.

By Lemma \ref{lem:morecase2}, $k_- = 3$, so we have that $k_-$ is odd and $k_+$ is even. By Theorem \ref{GHtype}, we know that $\pi_*^\Q(\mathcal{F})$ has dimension five and is generated by elements in even degrees $2$ and $2(k_+ + 1)$ and odd degrees $3$, $k_+ - 1$, and $k_+ + 1$. Recall that $\pi_*^\Q(M)$ has dimension four with generators in degrees $2$, $3$, $4$, and $4n + 3$. Also by Lemma \ref{lem:morecase2}, $G/K^-$ is diffeomorphic to a homogeneous space $G'/K'$ where $G'$ is simple, $K'$ has corank $1$ in $G'$, and the $K'$ action on $G'$ extends to $K'\times \SU(2)$ homogeneous action.

By Proposition \ref{connecthom} and the fact that $k_+ \geq 2$, the connecting homomorphism $\partial:\pi_3^\Q(M)\rightarrow \pi_2^\Q(\mathcal{F})$ is zero and the connecting homomorphism $\partial:\pi(M)_{2(k_+ + 1) + 1}\Q\rightarrow \pi_{2(k_+ + 1)}^\Q(\mathcal{F})$ is non-trivial. The latter statement implies $2(k_+ + 1) + 1 = 4n + 3$ and hence $k_+ = 2n$. 

Next, the connecting homomorphism $\partial:\pi_2^\Q(M)\rightarrow \pi_1^\Q(\mathcal{F})$ is automatically trivial if $k_+  > 2$ and is non-trivial if $k_+ = 2$. Indeed, if it is trivial in the latter case, then $\pi_1^\Q(G/K^-) \cong \pi_1(G/H) \cong \Q$, contradicting the fact that $G/K^-$ is diffeomorphic to a homogeneous space of a simple group.

Finally, the connecting homomorphism $\pi_4^\Q(M)\rightarrow \pi_3^\Q(\mathcal{F})$ is non-trivial. Indeed, if it is trivial, then $\pi_4^\Q(G/H)$ is non-trivial and $\dim \pi_3^\Q(G/H)\geq 2$.  From the bundle $\s^2\rightarrow G/H\rightarrow G/K^-$, we see that $\pi_3^\Q(G/K^-)$ and $\pi_4^\Q(G/K^-)\cong \Q$ are both non-trivial. On the other hand, $G/K^-$ is diffeomorphic to the homogeneous space $G_1/K^-_1$ where $G_1$ is simple. In particular, the map $\pi_3^\Q(K^-_1) \to \pi_3^\Q(G_1)$ is surjective (see \cite[Section 10, p. 58, and Theorem 2, p. 257]{Onishchik97}), which implies that $\pi_3^\Q(G_1/K^-_1)$ and $\pi_4^\Q(G_1/K^-_1)$ cannot both be non-trivial. This is a contradiction.

With the connecting homomorphism computed, we can compute $\pi_*^\Q(G/H)$ using the long exact homotopy sequence for the fibration $\mathcal F \to G/H \to M$. This naturally splits into two cases, so we conclude the proof by proving the following two lemmas.

\begin{lemma}
There is no primitive cohomogeneity one rational $\s^2\times \HP^n$ in Case 2 of Theorem \ref{thm:Frank} such that $k_+ = 2$.
\end{lemma}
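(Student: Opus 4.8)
The plan is to determine $\pi_\ast^\Q(G/H)$ from the connecting-homomorphism information assembled just above, use Lemma \ref{lem:morecase2} to pin the group diagram down completely, and then compute $H^\ast(M;\Q)$ from the double disc bundle decomposition, where it will turn out to be singly generated --- contradicting $M\simeq_\Q\s^2\times\HP^n$. Since $k_+=2$ and $k_+=2n$, we have $n=1$, so $M\simeq_\Q\s^2\times\s^4$ and $\pi_\ast^\Q(M)$ is generated in degrees $2,3,4,7$. Feeding this, the description of $\pi_\ast^\Q(\mathcal F)$ (generators in degrees $1,2,3,3,6$), and the facts that $\partial$ is nontrivial in degrees $2,4,7$ and trivial in degree $3$, into the long exact rational homotopy sequence of $\mathcal F\to G/H\to M$ shows that $\pi_\ast^\Q(G/H)$ is three-dimensional with generators in degrees $2,3,3$; since $G/H$ is rationally elliptic, this forces $G/H\simeq_\Q\s^2\times\s^3$. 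Because $G$ and $H$ share no connected normal subgroup (by almost effectiveness), \cite[Theorem 3.1]{DeVito17} then gives $G\cong\SU(2)\times\SU(2)$ with $H_0\cong\sone$; in particular $G_1\cong\SU(2)$.

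Next I would apply Lemma \ref{lem:morecase2}: $K^-=K_1^-\times\Delta\SU(2)$ with $K_1^-\subseteq G_1=\SU(2)$ of corank one and hence finite, $K^+=K_1^+\times\sone$, and $H=K_1^-\times\Delta\sone$. Since $K^+$ is connected (as $k_-\geq 3$), $K^+/H=\s^1$, and $\dim H=1$, a dimension count forces $K_1^+$ to be a maximal torus of $G_1$, so $K^+=T_1\times\sone$ is a maximal torus of $G$ with identity component of $H$ equal to the diagonal circle $\Delta\sone$ of $K^+$. Thus $G/K^+\cong\s^2\times\s^2$ while $G/K^-\simeq_\Q\s^3$, with equivariant normal disc bundles of ranks $2$ and $3$. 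The rank-two bundle $D(G/K^+)=G\times_{K^+}\C$ is associated to the slice character $\chi\colon K^+\to\Un(1)$, which acts transitively on the unit circle of the normal fiber with isotropy $H$; therefore the identity component of $\ker\chi$ is $\Delta\sone$, a circle projecting nontrivially onto both circle factors of $K^+=T_1\times\sone$. Writing $\chi=u_1^pu_2^q$ in the characters dual to these factors, this forces $p,q\neq 0$, so the associated complex line bundle over $\s^2\times\s^2$ has Euler class $e=p\alpha_1+q\alpha_2$ with $pq\neq 0$, where $\alpha_1,\alpha_2$ generate $H^2(\s^2\times\s^2;\Z)$.

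Finally I would run the Mayer--Vietoris sequence over $\Q$ for $M=D(G/K^-)\cup_{G/H}D(G/K^+)$. Using $H^\ast(D(G/K^-);\Q)\cong H^\ast(\s^3;\Q)$, the fact that $G/H\simeq_\Q\s^2\times\s^3$, and the fact that the restriction $H^3(D(G/K^-);\Q)\to H^3(G/H;\Q)$ is an isomorphism (the Euler class of the rank-three normal bundle is rationally zero, being of odd rank), one obtains $H^2(M;\Q)=\Q x$ with $x$ restricting on $G/K^+$ to a nonzero multiple of $p\alpha_1+q\alpha_2$, together with an isomorphism $H^4(M;\Q)\xrightarrow{\,\sim\,}H^4(\s^2\times\s^2;\Q)$. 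Hence $x^2$ restricts to a nonzero multiple of $(p\alpha_1+q\alpha_2)^2=2pq\,\alpha_1\alpha_2$, so $x^2\neq 0$ in $H^4(M;\Q)$ and $M\simeq_\Q\CP^3$. But the degree-two generator of $H^\ast(\s^2\times\s^4;\Q)$ squares to zero, a contradiction, so no such action exists. (Alternatively, once the diagram is fixed one could quote Hoelscher's classification of cohomogeneity one manifolds of dimension at most seven \cite{Hoelscher10}.)

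The step I expect to be most delicate is the slice-representation computation: one must extract from Lemma \ref{lem:morecase2} precisely how $H$ and $K^+$ embed in the maximal torus $K^+\subseteq G$ in order to see that the identity component of $\ker\chi$ is a genuinely diagonal circle, so that both exponents of $\chi$ are nonzero. The Mayer--Vietoris bookkeeping, though routine, must be carried out carefully enough to confirm that the relevant restriction maps are isomorphisms in the degrees used.
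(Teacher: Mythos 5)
Your proof is correct, and its first half coincides with the paper's: both arguments use the connecting homomorphisms to compute that $\pi_\ast^\Q(G/H)$ is generated in degrees $2,3,3$, conclude $G/H\simeq_\Q \s^2\times\s^3$, and then combine \cite[Theorem 3.1]{DeVito17} with Lemma \ref{lem:morecase2} to pin down $G=\SU(2)\times\SU(2)$, $H_0=\Delta\sone$, $K^+$ a maximal torus, and $K^-_0=\Delta\SU(2)$. You diverge at the endgame: the paper quotes Hoelscher's classification (Example $Q^6_A$ in \cite{Hoelscher10}) to see that exactly two simply connected manifolds carry such a diagram, and then Uchida \cite{Uchida77} to identify both as rational $\CP^3$'s, whereas you compute the slice character $\chi=u_1^pu_2^q$ of $K^+$ directly, note that $p,q\neq 0$ because $(\ker\chi)_0=H_0$ is a genuinely diagonal circle in $T_1\times\sone$, and then run Gysin plus Mayer--Vietoris to produce $x\in H^2(M;\Q)$ restricting on $G/K^+\cong\s^2\times\s^2$ to $e=p\alpha_1+q\alpha_2$, so that $x^2$ restricts to $2pq\,\alpha_1\alpha_2\neq 0$, contradicting $x^2=0$ in a rational $\s^2\times\HP^1=\s^2\times\s^4$. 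Your route is more self-contained (no appeal to the Hoelscher/Uchida classifications at this step), at the cost of the slice-representation and Euler-class bookkeeping you flag; the paper's route buys the precise equivariant identification of the two competing actions, which your argument does not (and need not) recover. One small simplification: the injectivity of $H^4(M;\Q)\to H^4(G/K^+;\Q)$ that you verify via the vanishing rational Euler class of the rank-three normal bundle is not actually needed, since restriction to $G/K^+$ is a ring homomorphism, so $x^2\mapsto e^2\neq 0$ already forces $x^2\neq 0$ in $H^4(M;\Q)$.
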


\begin{proof} 
By the comments above, we have that $\pi_*^\Q(G/H)$ has dimension three with generators in degrees $2$, $3$, and $3$. By inspection of the Sullivan minimal model, it is straightforward to see that $G/H \simeq_\Q \s^2 \times \s^3$. Using \cite[Theorem 3.1, Case 1]{DeVito17}, it follows from \ref{lem:morecase2} that up to cover, $G = \SU(2)\times \SU(2)$, $K^-_0 = \Delta \SU(2)$, and $H_0 = \Delta \sone$. Finally, from the bundle $\s^{1}\rightarrow G/H\rightarrow G/K^+$, we see that up to cover, $G/K^+$ is a rational $\s^2\times \s^2$.  Since $K^+$ is connected, we must have $K^+ = \sone\times \sone\subseteq \SU(2)\times \SU(2)$. 

This setup is studied in Hoelscher \cite[Example $Q^6_A$]{Hoelscher10}.  In particular, there are precisely two simply connected cohomogeneity  one manifolds of this form, having group diagrams $H = \mathbb{Z}/l\mathbb{Z} \cdot \Delta \mathrm{S}^1$, $K^+ = \sone\times \sone$, $K^- =  \mathbb{Z}/l\mathbb{Z} \cdot \Delta \SU(2)$, and $G =  \SU(2)\times \SU(2)$  with $l \in \{1,2\}$.  According to Uchida \cite{Uchida77}, the $l=1$ case corresponds to the natural $\SO(4)$ action on the Grassmannian $\SO(7)/\SO(2)\times\SO(5)$, a rational $\mathbb{C}P^3$.  Likewise the case $l=2$ corresponds to the natural action of $\SO(4)$ on $\CP^3$.  In both cases, we have a contradiction to the fact that $M \simeq_\Q \s^2 \times \HP^1$, so the case $k_+ = 2$ does not occur.
\end{proof}

\begin{lemma}
There is no primitive cohomogeneity one rational $\s^2\times \HP^n$ in Case 2 of Theorem \ref{thm:Frank} such that $k_+ \geq 4$.
\end{lemma}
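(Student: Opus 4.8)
In this regime $k_+=2n\ge 4$ and $n\ge 2$.  By Proposition \ref{connecthom} and the discussion preceding this lemma, the connecting homomorphism $\partial$ of the fibration $\mathcal F\to G/H\to M$ vanishes on $\pi_2^\Q(M)$ and $\pi_3^\Q(M)$ and is nonzero (hence injective) on $\pi_4^\Q(M)$ and on $\pi_{4n+3}^\Q(M)$, and these four are the only nonzero rational homotopy groups of $M\simeq_\Q\s^2\times\HP^n$.  Feeding this into the long exact rational homotopy sequence, together with the Grove--Halperin description of $\pi_*^\Q(\mathcal F)$ in Theorem \ref{GHtype}, a direct bookkeeping shows that $\pi_*^\Q(G/H)$ is five-dimensional with two generators in degree $2$ and one generator in each of the degrees $3$, $k_+-1$, and $k_++1$ (for $n=2$ the generator of degree $k_+-1=3$ coincides with the degree-$3$ generator).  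By part (4) of Lemma \ref{lem:morecase2} the bundle $\s^2\to G/H\to G/K^-$ has a section, so its long exact sequence splits and $\pi_*^\Q(G/K^-)$ is three-dimensional, with one even generator in degree $2$ and two odd generators in the consecutive degrees $k_+-1$ and $k_++1$.  Examining the minimal Sullivan model on these three generators, the only differentials compatible with finite-dimensional cohomology force $G/K^-$ to be rationally $\CP^{k_+/2}\times\s^{k_+-1}$ or $\CP^{k_+/2-1}\times\s^{k_++1}$; in particular $H^2(G/K^-;\Q)\cong\Q$ and $\dim_\Q\pi_{odd}^\Q(G/K^-)=2$.

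Next I would pin down the group diagram.  By Lemma \ref{lem:morecase2}, $G/K^-\cong G_1/K_1^-$ with $G_1$ simple, $K_1^-$ of corank one in $G_1$, and $K_1^-$ centralized by an $\SU(2)\subseteq G_1$ (so $K_1^-\cdot\SU(2)$ is a maximal-rank subgroup of $G_1$); moreover $H^2(G_1/K_1^-;\Q)\cong\Q$ makes $K_1^-$ have a one-dimensional center.  Combining the classification of homogeneous spaces of simple Lie groups with $\dim_\Q\pi_{odd}^\Q=2$ (Onishchik \cite{Onishchik97}, Table 11) with the corank-one condition, the presence of the centralizing $\SU(2)$, and the two rational homotopy types just found, one is left with a short explicit list of candidates for $(G_1,K_1^-)$, the principal one being, up to finite covers, $G_1=\SU(n+1)$ and $K_1^-=\Un(n-1)$ realized as a block subgroup commuting with an $\SU(2)\subseteq\SU(n+1)$, so that $G_1/K_1^-$ is rationally the unit tangent bundle of $\CP^n$.

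Finally I would eliminate every candidate by analyzing $K^+$.  Here $K^+=K_1^+\times\sone$ with $K_1^+\subseteq G_1$ of full rank; a dimension count together with the splitting argument of the first paragraph applied to the bundle $\s^{k_+-1}\to G/H\to G/K^+$ shows $G/K^+=(G_1/K_1^+)\times\s^2$ with $G_1/K_1^+$ a rational $\CP^n$ (treating a possibly nonzero rational Euler class by a parallel computation), which via Kapovitch--Ziller \cite{KapovitchZiller04} and Borel--de Siebenthal makes $K_1^+$ essentially $\Un(n)$, apart from a few small-rank alternatives.  Then $K_1^-\subseteq K_1^+\subseteq G_1$ together with the demand that $K^+/H=\s^{k_+-1}$ be an honest sphere is over-determined: comparing the isotropy representation of $K_1^-$ inside $K_1^+$ with the one inside $G_1$ forced by the centralizing $\SU(2)$, or else computing $H^*(M;\Q)$ by Mayer--Vietoris from the double disc bundle over $G/K^+$ and $G/K^-$ and comparing with $\s^2\times\HP^n$, produces a contradiction in each case.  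The main obstacle is precisely this last step --- the finite but delicate enumeration of the admissible full-rank subgroups $K_1^+$ lying between $H$ and $G_1$ and the case-by-case verification that the resulting double disc bundle is never rationally $\s^2\times\HP^n$ --- and one must also treat small $n$ by hand, since for $n=2$ the degrees $4$, $k_+-1$, and $k_++1$ collide and the bookkeeping above has to be redone.
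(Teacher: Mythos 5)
Your first two steps match the paper: the computation of $\pi_*^\Q(G/H)$ (dimension five, degrees $2,2,3,k_+-1,k_++1$) and the use of Lemma \ref{lem:morecase2}(4) to get $\pi_*^\Q(G/K^-)$ of dimension three in degrees $2$, $k_+-1$, $k_++1$ are exactly the paper's opening moves. But after that the proposal has a genuine gap: the decisive elimination is asserted rather than carried out. You promise ``a short explicit list of candidates'' for $(G_1,K_1^-)$ from Onishchik's tables, and then claim that comparing isotropy representations or running Mayer--Vietoris ``produces a contradiction in each case,'' while conceding that this enumeration and case-by-case verification is the main obstacle and that small $n$ must be treated separately. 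That is precisely the part of the argument that has to be supplied; as written, no contradiction is ever derived. There is also a smaller soft spot earlier: your parenthetical ``treating a possibly nonzero rational Euler class by a parallel computation'' glosses over the one point that needs an argument in identifying $G_1/K_1^+$ as a rational $\CP^n$, namely that $\pi_{k_+}^\Q(G/K^+)=0$. The paper proves this via the minimal model of $G/K^+$ (an even generator in degree $k_+$ could not be killed, since the top odd degree is $k_++1$); equivalently, a nonzero rational Euler class would give $G/K^+$ three even and only two odd rational homotopy generators, contradicting ellipticity with $\chi>0$.

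The paper closes the endgame in a way that avoids your delicate enumeration entirely. From $\pi_*^\Q(G/K^-)$ it reads off that $G_1$ must have two consecutive nontrivial odd rational homotopy groups, so $G_1\in\{\SU(m),\SO(4m+2),\E_6\}$; from $G/K^+\cong \s^2\times G_1/K_1^+$ with $G_1/K_1^+$ a rational complex projective space it gets $G_1\in\{\SO(2m+1),\SU(m),\Sp(m),\Gtwo\}$ (Onishchik, Table 10). Intersecting forces $G_1=\SU(m)$ with no small-rank alternatives, and then $K_1^+=\Un(m-1)$; moreover $K_1^-$ is pinned down as $\Un(m-2)$ not by isotropy-representation comparisons but directly from Theorem \ref{thm:transsphere} applied to $\s^{k_+-1}=K^+/H=(K_1^+\times\sone)/(K_1^-\times\Delta\sone)$, using Frank's Lemma 1.3 to know $K_1^-$ acts almost effectively there. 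The resulting diagram is Uchida's Example 3.3, a cohomogeneity one action on a rational $\CP^{2n+1}$, which contradicts $M\simeq_\Q\s^2\times\HP^n$ (the square of the degree-two class distinguishes them). To repair your write-up you would need either to reproduce this identification or to actually produce and kill your candidate list; in its current form the proof is incomplete.
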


\begin{proof}
Note in this case that both $K^\pm$ and $H$ are connected by Section \ref{sec:preliminaries}. As in the previous case, we can compute $\pi_*^\Q(G/H)$. It has dimension five and is generated by elements in degrees $2$, $2$, $3$, $k_+ - 1$, and $k_+ + 1$. Applying Proposition \ref{lem:morecase2} to the fibration $\s^2 \to G/H \to G/K^-$, we see that $\pi_*^\Q(G/K^-)$ has dimension three with generators in degrees $2$, $k_+ - 1$, and $k_+ + 1$.

Now we consider the fiber bundle $\s^{k_+-1}\rightarrow G/H\rightarrow G/K^+$. We first claim that $\pi_{k_+}(G/K^+) = 0$. Indeed, the long exact homotopy sequence implies that highest odd degree for which $\pi_i^\Q(G/K^+)\neq 0$ is $k_+ + 1$. Since $k_+ \geq 4$ and since $G/K^+$ has bounded cohomology, we see by inspection of the Sullivan minimal model for $G/K^+$ that $\pi_{k_+}^\Q(G/K^+) \neq 0$ would imply that no differential can kill powers of the generator in degree $k_+$, a contradiction. Hence $\pi_{k_+}(G/K^+) = 0$ and the long exact homotopy sequence implies that $\pi_*^\Q(G/K^+)$ has dimension four and is generated in degrees $2$, $2$, $3$, and $k_++ 1$.

We now compute $G = G_1 \times \SU(2)$. On on hand, the fact that $G/K^- = G_1/K^-_1$ has consecutive non-trivial odd rational homotopy groups implies $G_1$ does as well. Since $G_1$ is simple, we have $G_1\in\{\SU(m), \SO(4m+2), \E_6\}$. On the other hand, Lemma \ref{lem:morecase2} implies that $G/K^+$ is diffeomorphic to $\s^2 \times G_1/K^+_1$. Hence $G_1/K^+_1$ is a rational $\mathbb{C}P^{\frac{k_+}{2} + 1}$, which means that $G_1\in \{\SO(2m+1), \SU(m), \Sp(m), \Gtwo\}$ for some  $m$ (see \cite[Table 10, pg. 265]{Onishchik97}). Putting these together, we have $G_1 = \SU(m)$ with $m  = \frac{k_+}{2} + 2 = n + 2$. 

Next it follows that $K^+_1 = \Un(m-1)$ since $G_1/K_1^+$ is a rational $\CP^m$. In addition, $K^-_1 = \Un(m-2)$ by Theorem \ref{thm:transsphere} since $\s^{k_+ - 1} = K^+/H = (K^+_1 \times \sone)/(K^-_1 \times \Delta\sone)$  and since $K^-_1$ acts almost effectively on $K^+/H$ by \cite[Lemma 1.3]{Frank13}. It follows that up to conjugacy, this is the form of \cite[Example 3.3, pg 157]{Uchida77}.  In particular, $M$ is rationally $\CP^{2n+1}$, contradicting the fact that $M\simeq_\Q \s^2\times \HP^n$.
\end{proof}

\subsection{Proof of Theorem \ref{thm:4periodic}, Case 3}\label{sec:4periodiccase3}
In this subsection, we prove that no simply connected, closed manifold $M$ with $M \simeq_\Q \s^2\times\HP^m$ admits a primitive cohomogeneity one action by an exceptional Lie 
group $G$.

Since $\pi_3^\Q(M) \cong \pi_{4n+3}^\Q(M)\cong \Q$,  Proposition \ref{connecthom} implies that $\pi_2^\Q(-)$ or $\pi_{4n+2}(-)$ of the loop space factor must be non-trivial.  If it is $\pi_2^\Q(-)$, then $\mathcal{F}\simeq_\Q \s^1\times \s^1\times \Omega \s^3$.  Since we are assuming $k_-  > 2$, this cannot occur.  It now follows that $\pi_3^\Q(G/H)$ is non-trivial.

Since $G$ is simple, this implies $H_0$ is a torus.  Since $k_+$ is even, it follows that $\pi_2^\Q(\mathcal{F})$ has dimension at most $1$, so $\rk H = \dim\pi_2^\Q(G/H_0) \leq 2$.  Since there are no exceptional Lie groups of rank $3$, and $H$ has corank $1$ in $G$, this implies $G = \Gtwo$ and $H_0 = \sone$.  Thus, $\dim G/H = 13$, so $M\simeq_\Q \s^2\times \HP^3$.  In particular, $\pi_{15}^\Q(M)\neq 0$.  By Proposition \ref{connecthom}, we must have $k_+ + k_- - 1 = 15$ or $k_+ + k_- -1 = 8$.  Now, $\pi_{11}^\Q(\Gtwo)\cong \pi_{11}^\Q(G/H)\cong \Q$, so it follows from the long exact sequence associated to $\mathcal{F}\rightarrow G/H\rightarrow M$ that $\pi_{11}^\Q(\mathcal{F})\neq 0$.  Because $k_+ + k_- -1\in \{8,15\}$, it follows that up to reordering $k_\pm$, that $k_+ = 7$ or $k_+ = 12$.  But $H_0 = \sone$ is not the isotropy group of any transitive action on a sphere of dimension $6$ or $11$, so we have a contradiction.

\subsection{Proof of Theorem \ref{thm:4periodic}, Case 4}\label{sec:4periodiccase4}

In this subsection, we conclude the proof of Theorem \ref{thm:4periodic} by classifying simply connected rational $\s^2 \times \HP^n$ that admit primitive cohomogeneity one actions by simple classical groups. Frank's classification implies that $M$ is one of the $G$-manifolds listed in Table \ref{table:FrankAppendix}. We use the fact that $\s^2 \times \HP^n$ has the same dimension and Euler characteristic as $\CP^{2n+1}$. Hence the quantity $\frac{\dim M}{\chi(M) - 1} = 2$. By inspection of the table, this only occurs when $$(G,H)\in \{(\SO(7), \SU(3)),(\Spin(7), \SU(3)), (\SO(8), \SU(4)), (\SO(10), \sone\times \SU(4))\}.$$  The first, second, and last cases appear in \cite{Uchida77}: they are actions on $\SO(11)/\SO(2) \times \SO(9)$, $\mathbb{C}P^7$, and $\mathbb{C}P^{15}$ respectively. All of these spaces are rational complex projective spaces and hence not rational $\s^2 \times \HP^n$.

In the remaining case, $(G,H) = (\Spin(8), \SU(4))$, we have $K^+ = \Un(4)$ and $K^- = \Spin(7)$.  It follows that $k_+ = 2$, $k_- = 7$, and so $\pi_2^\Q(\mathcal{F}) = 0$.  Since $\pi_3^\Q(G/H) = 0$ as well, it follows that $\pi_3^\Q(M) = 0$ as well.  This contradicts the fact that $M\simeq_\Q \s^2\times \HP^n$.

%%%% Bibliography %%%%%
%\bibliographystyle{alpha}
%\bibliography{myrefs}

\end{document}